%
%
\documentclass[a4paper]{amsart}

\usepackage{xcolor}

\newcommand{\U}{\mathcal{U}}

\newcommand{\A}{\mathcal{A}}

\newcommand{\D}{\mathcal{D}}
\newcommand{\M}{\mathcal{M}}
\newcommand{\N}{\mathcal{N}}

\renewcommand{\phi}{\varphi}
\renewcommand{\epsilon}{\varepsilon}

\newcommand{\LL}{\mathcal{L}}
\newcommand{\fin}{\textrm{fin}}
\renewcommand{\P}{\mathcal{P}}

\newcommand{\KK}{ \mathcal{K} }
\newcommand{\NN}{\mathbb{N}}

\newcommand{\ZZ}{\mathbb{Z}}

\newcommand{\TT}{\mathbb{T}}

\newcommand{\img}{\mathrm{img}}
\newcommand{\sym}{\mathrm{Sym}}

\newcommand{\Th}{\textrm{Th}}
\newcommand{\sig}{\mathrm{sig}}
\newcommand{\qtp}{\mathrm{qftp}}

\renewcommand{\models}{\vDash}

\renewcommand{\aa}{\overline{a}}
\newcommand{\bb}{\overline{b}}
\newcommand{\cc}{\overline{c}}

\newcommand{\xx}{\overline{x}}
\newcommand{\yy}{\overline{y}}

\newcommand{\gen}[2]{\left\langle #1 \right\rangle_{#2}}

\renewcommand{\r}{  {\upharpoonright} }

\renewcommand{\iff}{\Leftrightarrow}

\newcommand{\fraisse}{\textrm{Fra\"iss\'e }}
\newcommand{\Fraisse}{\textrm{Fra\"iss\'e }}

\usepackage{amscd}
\DeclareMathOperator{\acl}{acl}

\DeclareMathOperator{\tp}{tp}

\DeclareMathOperator{\qftp}{qftp}

\usepackage{ stmaryrd }
\usepackage{enumerate}
\usepackage{tikz-cd}

\usepackage{hyperref}
\usepackage{fancyhdr}
\usepackage[english]{babel}
\usepackage[T1]{fontenc}
\usepackage[utf8]{inputenc}

\usepackage{amsmath, amsthm, amscd, amsfonts}
\usepackage{amssymb, tikz}
\usepackage{mathtools}
\usepackage{tikz-cd}
\usepackage{amscd}
\usepackage{MnSymbol,bbding,pifont}

\usepackage{hyperref}
\usepackage[capitalise]{cleveref}

\usepackage{setspace}
\onehalfspacing

\makeatletter
\renewcommand\footnotesize{%
	\@setfontsize\footnotesize\@ixpt{9}%
	\abovedisplayskip 8\p@ \@plus2\p@ \@minus4\p@
	\abovedisplayshortskip \z@ \@plus\p@
	\belowdisplayshortskip 4\p@ \@plus2\p@ \@minus2\p@
	\def\@listi{\leftmargin\leftmargini
		\topsep 4\p@ \@plus2\p@ \@minus2\p@
		\parsep 2\p@ \@plus\p@ \@minus\p@
		\itemsep \parsep}%
	\belowdisplayskip \abovedisplayskip
}
\makeatother


\renewcommand{\qtp}{\mathrm{qftp}}
\newcommand{\aut}{\mathrm{Aut}}

\renewcommand{\KK}{\mathbf{K}}
\newcommand{\age}{\mathsf{age}}

\newcommand{\h}{\mathsf{h}}

\renewcommand{\emptyset}{\varnothing}

\newcommand{\Bvert}{\,\Big\vert\,}

\renewcommand{\rho}{\varrho}
\newcommand{\concat}{{^\smallfrown}}

\newcommand{\pred}{\mathtt{pred}}

\renewcommand{\TT}{\mathbf{T}}
\newcommand{\tcl}{\mathrm{tcl}}
\newcommand{\mult}{\mathrm{mult}}
\newcommand{\qr}{\mathrm{qr}}

\newtheorem{thm}{Theorem}[subsection]
\newtheorem{cor}[thm]{Corollary}
\newtheorem{lemma}[thm]{Lemma}
\newtheorem{prop}[thm]{Proposition}

\theoremstyle{remark}
\newtheorem*{claim}{Claim}
\newtheorem*{conv}{Convention}
\newtheorem{rem}[thm]{Remark}
\newtheorem{fact}[thm]{Fact}
\newtheorem{obs}[thm]{Observation}

\newtheorem{notation}[thm]{Notation}

\renewcommand{\r}{  {\upharpoonright} }

\renewcommand{\iff}{\Leftrightarrow}

\theoremstyle{definition}
\newtheorem{defn}[thm]{Definition}


\newcommand{\indep}{\,\,\raise.2em\hbox{$\,\mathrel|\kern-.93em\lower.4em\hbox{$\smile$}$}}
\newcommand{\nindep}{\,\,\raise.2em\hbox{$\mathrel|\kern-.945em\lower.4em\hbox{$\smile$}
		\kern-.75em\hbox{\char'57}$}\;}

\newcommand{\shiftleft}[2]{\makebox[0pt][r]{\makebox[#1][l]{#2}}}

\newcommand{\myind}[1]{   \indep{\raise 7pt\hbox{\shiftleft{4pt}{{\tiny #1}}}}   }
\newcommand{\nmyind}[1]{   \indep{\raise 7pt\hbox{\shiftleft{4pt}{{\tiny #1}}}}{\raise .5pt\hbox{\shiftleft{10pt}{$\not$}}}   }
\newcommand{\myhatind}[1]{  
	verset{\sim}{\indep}{\raise 7pt\hbox{\shiftleft{5pt}{{\tiny #1}}}}  }

\newtheoremstyle{InProofNum}
{\topsep}{\topsep}              
{}                      
{}                              
{\itshape}                     
{.}                             
{ }                             
{\thmname{#1}\thmnote{ #3}}
\theoremstyle{InProofNum}

\begin{document}

	\title{Asymptotic Classes of Trees and $\aleph_0$-categoricity}

\author{Mostafa Mirabi}

\address{Department of Mathematics and Computer Science, The Taft School, 110 Woodbury Rd., Watertown, CT 06795}

\email{mmirabi@wesleyan.edu}

\urladdr{https://sites.google.com/site/mostafamirabi/}

\subjclass[2010]{03C13 , 03C45, 03C15}

\date{}

\keywords{Asymptotic classes,  $\aleph_0$-categorical theories, pseudo-finite theories, supersimple theories, SU-rank}

\maketitle

\begin{abstract}
This paper focuses on the characterization of $\aleph_0$-categorical theories of trees in the following sense:  for any $\aleph_0$-cateorical theory $T$ of trees there is a tree plan $\Gamma$ (see Definition \ref{def_tree-plan_I(S)}) such that $T=Th(\Gamma(\omega))$ where $\Gamma(\omega)$ is the generic model of a {\Fraisse} class $\KK(\Gamma)$ obtained from the tree plan $\Gamma$. Also, it is shown that $\KK(\Gamma)$ forms an asymptotic class, and its model-theoretic properties have been studied. Moreover, it is demonstrated that 
 asymptotic classes of finite trees yield $\aleph_0$-categorical ultraproducts, and a characterization of 
 supersimple, finite rank trees, using the notion of tree plan, is provided. 
\end{abstract}


\section{Introduction}

The notion of a one-dimensional asymptotic class was introduced by Macpherson and Steinhorn in \cite{macpherson-steinhorn-2007},  in which, roughly speaking, the cardinalities of definable sets are well controlled. Elwes, in \cite{elwes-2007},  generalized this notion and defined an N-dimensional asymptotic class for natural numbers $N\geq 1$.  The motivating example is the class of finite fields. The definition was inspired by a theorem of Chatzidakis, van den Dries and Macintyre, see \cite{c-vdd-m-1992}, which is essentially  a generalization of the classical Lang–Weil estimates for varieties in the class of finite fields. Other examples of asymptotic classes include the class of all finite cyclic groups, the class of Paley graphs, classes of classical geometries (e.g., linear, affine, and projective spaces over finite fields), and any class of finite Moufang polygons \cite{DELLO-STRITTO}. Also, in \cite{Garcia_O-Asymptotic}, Garcia has defined a concept called o-asymptotic class which is a natural generalization of the notion of asymptotic classes to finite ordered structures. 

The study of trees from various perspectives has been a subject of interest in model theory.  In this paper, we study the model theory of asymptotic classes of trees, and provide a  characterization of $\aleph_0$-categorical theories of trees using the notion of tree plans, and also explore a characterization of asymptotic classes of trees. Here by a tree we mean a partially ordered set in a language containing an ordering, a predecessor function, a meet function and a constant symbol that denotes the root of the tree.  The notion of tree plan plays a key role (see Definition \ref{def_tree-plan_I(S)}). Roughly speaking a tree plan is a finite tree together with a function that assigns 1 or $\infty$ to each node, representing the multiplicity of elements in the generic model.

In \cite{MS-meas-Mirabi}, the author, inspired by Hill's results in \cite{hill-coordinatization},  has used the concept of a tree plan to conduct an  analysis of MS-measurable structures. Through this investigation, he made a discovery by establishing a  relationship between coordinatized structures and $\aleph_0$-categorical MS-measurable structures. In particular, he defined a notion of coordinatization for $\aleph_0$-categorical structures which is, like Lie coordinatized structures in \cite{FSWFT}, and demonstrated that a structure coordinatized, in a strong sense, by $\aleph_0$-categorical MS-measurable structures is itself MS-measurable (also see \cite{Mirabi-thesis}).

 The paper is organized as follows. Section \ref{prelim} is devoted for preliminaries including trees, \fraisse limits, and asymptotic classes.  Section \ref{sec-treeplan}  introduces the notion of tree plan and tree-closure which play crucial in the rest of the paper. In Section \ref{K(Gamma) is a Fraisse class}, it is shown that every tree plan $\Gamma$ gives us a \Fraisse class $\KK(\Gamma)$. Also, it is demonstrated that the common theory of $\KK(\Gamma)$ is  pseudofinite. In Section \ref{K(Gamma) is an Asymptotic Class}, we show that $\KK(\Gamma)$  forms a $\deg(\Gamma)$-dimensional asymptotic class.  Section \ref{Characterization of  aleph0-categorical Trees} provides a characterization theorem for $\aleph_0$-categorical theories of trees.  Section \ref{Characterization of Asymptotic Classes of Trees} offers a characterization of supersimple trees of finite height  and also asymptotic classes of finite trees utilizing the notion of tree plan. Finally, in Section \ref{model theoretic properties of Gamma(omega)}, we study some model theoretic property of the generic model $\Gamma(\omega)$.
 
 \subsection{Notation} Throughout this paper, we use calligraphic upper-case letters like $\M$, $\N$ to denote infinite structures with universes $M$ and $N$, respectively. We use simple upper-case letters like $A,B,C$ to denote finite structures and identify them with their universes. We use bold capital letters like $\KK$, $\mathbf{C}$ to denote class of structures. In general, our notation is standard (see \cite{marker-textbook}). Also, for a formula $\phi$ we write $\qr(\phi)$ to denote the quantifier rank of $\phi$. 
 

 \section{Preliminaries}\label{prelim}
 
 \subsection{Trees} There are many different interpretations of the word ''tree''. Here we will define our notion of tree.
  \begin{defn}\label{defn-tree}
 	\begin{enumerate}
 		\item The \textit{language of trees}, $\LL_t$,  has signature $\sig(\LL_t)= \left\{\leq, \varepsilon, \sqcap, \pred  \right\}$, where $\leq $ is a binary relation symbol, $\varepsilon$ is a constant symbol, $\sqcap$ is a binary function symbol, and $\pred$ is a unary function symbol.

 		\item 	A \textit{tree} is an $\LL_t$-structure $\A$ such that: 
 		\begin{itemize}
 			\item $\leq^\A$ is a partial ordering of the universe in which, for each $b\in A$, the downset $\left\{a:a\leq^\A b\right\}$ is well-ordered.
 			\item $\epsilon^\A$ is the unique minimum element of $\leq^\A$ --- the root of  $\A$.
 			\item For $a,a'\in A$, $a\sqcap^\A a'$ is the unique maximum element of $A$ that is below both $a$ and $a'$ --- the meet of $a$ and $a'$.
 			\item For each $a\in A$, $\pred^\A(a)$ is the maximum element among the elements that are strictly less than $a$. It is called the predecessor of $a$. Also, as a convention we let  $\pred^\A(\varepsilon)=\varepsilon$.
 		\end{itemize}
 		Note that the class of trees in this sense is {\em not} first-order axiomatizable.
 	\end{enumerate}
 \end{defn}
 It follows from Definition \ref{defn-tree} that $\{a : a\leq b\}$ is actually finite for all $b$. The finiteness is because predecessors exist; $b, \pred(b), \pred^2(b), \dots$ is a strictly decreasing sequence, so it must hit $\epsilon$ after finitely many steps, by the well-ordering.
 
 
 Note that $\omega^{<\omega}$ with end-extension ordering has a natural tree structure, and when we write a tree $\Gamma \subseteq \omega^{<\omega}$, we mean that $\Gamma$ is a substructure. In particular, it contains $\gen{}{}$  and is closed under $\pred$. 
 
\subsection{ \Fraisse Limits}
\begin{defn}
	Let $\KK$ be a class of finite (or finitely generated) $\LL$-structures.  We catalog various properties that $\KK$ might have.
	\begin{enumerate}
		\item (\textbf{P0}): for all $n\in \NN$, the set of members of $\KK$ that have generating sets of size $\leq n$ is finite up to isomorphism. 
		\item \textit{Hereditary Property} (\textbf{HP}): 
		for every $B\in\KK$, every induced substructure $A\leq B$ is in $\KK$.
		\item \textit{Joint-Embedding Property} (\textbf{JEP}):
		for any $A,B\in\KK$, there are $C\in \KK$ and embeddings $f_A:A\to C$ and $f_B:B\to C$.
		\item \textit{Amalgamation Property} (\textbf{AP}): 
		for any $A, B_1, B_2\in \KK$ and embeddings $f_i:A\to B_i$, $i=1,2$, there are $C\in \KK$ and embeddings $f'_i:B_i\to C$ such that $f'_1\circ f_1=f'_2\circ f_2$.
		\item \textit{Disjoint Joint-Embedding Property} (\textbf{disjoint JEP}):  for any $A, B\in \KK$ there are $C\in \KK$ and embeddings $f_A:A\to C$ and $f_B:B\to C$ such that $f_A[A]\cap f_B[B]=\emptyset$.
		\item \textit{Disjoint Amalgamation Property} (\textbf{disjoint AP}):  for any $A,B_1,B_2\in \KK$ and embeddings $f_i:A\to B_i $, $i=1,2$, there are $C\in \KK$ and embeddings $f'_i:B\to C$ such that $f'_1\circ f_1= f'_2\circ f_2$ and $f'_1[B_1]\cap f'_2[B_2]=f'_1\circ f_1[A] = f'_2\circ f_2[A]$.
	\end{enumerate}
\end{defn}

\begin{defn}
Let $\KK$ be a class of finite (or finitely generated) $\LL$-structures. $\KK$ is called a \textit{\fraisse class} whenever it is countable up to isomorphism and satisfies \textbf{P0}, \textbf{HP}, \textbf{JEP} and \textbf{AP}. 
\end{defn}

\begin{defn}
	An $\LL$-structure $\M$ is called \emph{homogeneous} if whenever $A, B \in \age(\M)$ with $A\leq B$ and $f : A \to \M$ is an embedding, then there is an embedding $g : B \to \M$   which extends $f$.  (Also, if $\KK$ is a class of finite $\LL$-structures, we say that $\M$ is \emph{$\KK$-homogeneous} if for any $A,B \in \KK$ with $A\leq B$ and any embedding $f_0: A \to \M$, there is an
	embedding $f:B \to M$ such that $f_0\subseteq f$.)
\end{defn}

\begin{defn}
	An $\LL$-structure $\M$ is called \emph{ultra-homogeneous} if for every finite substructure $A\leq \M$ and every embedding $f:A\to \M$, there is an automorphism  $g\in Aut(\M)$ which extends $f$.  (Also, if $\KK\subseteq \age(\M)$ is a class of finite $\LL$-structures, we say that $\M$ is \emph{$\KK$-ultra-homogeneous} if for any $A \in \KK$ and any embedding $f_0: A \to \M$, there is an automorphism  $f\in Aut(\M)$ such that $f_0\subseteq f$.)
\end{defn}
  
  Note that a countable $\LL$-structure $\M$ is ultra-homogeneous ($\KK$-ultra-homogeneous) if and only if it is  homogeneous ($\KK$-homogeneous).

\begin{thm}[\fraisse Theorem]\label{fras}
	Let $\KK\subseteq \age(\M)$ be a \fraisse class of finite $\LL$-structures,  then there is a  generic  model $\M$ with the following properties.
	\begin{itemize}
		\item \textbf{($\KK$-universality)} For every $A\in \KK$, there is an embedding $A\to \M$.
		\item \textbf{($\KK$-ultra-homogeneity)} For any $A\in \KK$  and any embedding $f: A\to \M$, there is an automorphism  $g\in Aut(\M)$ such that $f\subseteq g$.
		\item \textbf{($\KK$-closed)} For every finite $X\subset_{\fin} M$, there is an $A\in \KK$ such that $X\subseteq A\leq \M$.
		\item  ($\KK$-\textbf{uniqueness}) Let $\M, \M'$ be countably infinite $\KK$-universal, $\KK$-homogeneous, and  $\KK$-closed structures. Then $\M\cong \M'$.

	\end{itemize}
\end{thm}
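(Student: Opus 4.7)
The plan is to construct $\M$ as the union of a countable chain $A_0 \leq A_1 \leq A_2 \leq \cdots$ of members of $\KK$, built by a bookkeeping argument that simultaneously guarantees $\KK$-universality and $\KK$-ultra-homogeneity, and then to derive uniqueness by a standard back-and-forth. To set up the bookkeeping, I would first use \textbf{P0} together with the countability-up-to-isomorphism of $\KK$ to enumerate isomorphism representatives $B_0, B_1, \ldots$ of all of $\KK$, and then at each stage $n$, using that $A_n$ is finite (or finitely generated), enumerate all pairs $(C, f)$ where $C \in \KK$, $C \leq B$ for some $B \in \KK$ we have committed to embedding, and $f : C \to A_n$ is an embedding; by a standard diagonal trick one can arrange that every such eventual pair $(C, f)$ is treated at some stage.

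For the construction, at stage $n$ I would do two things. First, to ensure universality, I would use \textbf{JEP} to amalgamate the next $B_n$ on top of $A_n$, producing an extension $A_n' \in \KK$ containing both $A_n$ and an isomorphic copy of $B_n$. Second, to handle the next scheduled extension task $(C \leq B, f : C \to A_n')$, I would apply \textbf{AP} to the span $B \leftarrow C \xrightarrow{f} A_n'$ to obtain $A_{n+1} \in \KK$ together with an embedding $B \to A_{n+1}$ extending $f$. Setting $\M = \bigcup_n A_n$, $\KK$-closedness is automatic since every finite $X \subset_{\fin} M$ lies in some $A_n \leq \M$, and $\KK$-universality follows because each $B_n$ was explicitly embedded. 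For $\KK$-ultra-homogeneity, given $A \in \KK$ and an embedding $f : A \to \M$, choose $n$ large so that $f[A] \subseteq A_n$; then any embedding $g : A \to \M$ extends by the bookkeeping together with a back-and-forth between the two copies of $A$, yielding an automorphism of $\M$ extending $f$ (after exchanging roles of $f$ and $g$).

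For $\KK$-uniqueness, let $\M$ and $\M'$ both be countable $\KK$-universal, $\KK$-homogeneous, and $\KK$-closed structures, and enumerate $M = \{a_n : n \in \omega\}$ and $M' = \{a_n' : n \in \omega\}$. I would build an increasing chain of finite partial isomorphisms $p_0 \subseteq p_1 \subseteq \cdots$ with $p_n : X_n \to X_n'$, where $X_n \leq A_n \leq \M$ and $X_n' \leq A_n' \leq \M'$ with $A_n, A_n' \in \KK$. At even stages, ensure $a_n \in \dom(p_{n+1})$: extend $X_n$ to some $A \in \KK$ containing $a_n$ using $\KK$-closedness, then use $\KK$-universality of $\M'$ and $\KK$-homogeneity of $\M'$ to extend $p_n$ to $A$. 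At odd stages, do the same for $a_n'$ in the range, swapping the roles. Then $\bigcup_n p_n$ is an isomorphism $\M \to \M'$.

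The main obstacle in the construction is the bookkeeping itself: the extension tasks $(C, f)$ generated at later stages depend on $A_{n+1}$, so the enumeration must be arranged diagonally (e.g., at stage $n$, handle the $k$th task of stage $m$ for all $m + k = n$) to ensure every such task is eventually processed. Everything else is a routine application of \textbf{HP}, \textbf{JEP}, and \textbf{AP}; the hereditary property is what lets the substructures $X_n$ and the domains of partial embeddings remain in $\KK$, and the amalgamation property is what drives both the homogeneity construction and the back-and-forth.
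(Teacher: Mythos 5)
The paper does not supply a proof of this theorem; it simply cites Fra\"iss\'e's book and Hodges's \emph{A Shorter Model Theory}. Your sketch is the standard construction given in those references and is essentially correct: build $\M$ as the union of a chain $A_0 \leq A_1 \leq \cdots$ in $\KK$, using \textbf{JEP} to absorb every isomorphism type and \textbf{AP} (with a diagonal schedule) to realize every finite extension task, so that $\KK$-universality, $\KK$-closedness, and the finite extension property ($\KK$-homogeneity) hold by construction; uniqueness is then the back-and-forth you describe, powered by $\KK$-closedness, \textbf{HP}, and $\KK$-homogeneity. The one spot I would tighten is your derivation of $\KK$-ultra-homogeneity: what the bookkeeping delivers directly is $\KK$-homogeneity (for all $C \leq B$ in $\KK$ and embeddings $h \colon C \to \M$, an extension $B \to \M$ exists), and passing to automorphisms is exactly the same back-and-forth argument you spell out carefully for uniqueness, now run on $\M$ against itself starting from the partial isomorphism $f \colon A \to f[A]$; your phrasing about ``any embedding $g$'' and ``exchanging roles of $f$ and $g$'' on ``two copies of $A$'' obscures this, but since you execute the identical technique correctly in the uniqueness clause, it is a presentational rough edge rather than a gap.
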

  \begin{proof}
  See Section 11.1 of \cite{fraisse-ThOfRelations} or Chapter 6 of \cite{ShorterModelTheory}.
  \end{proof}
  
The generic model of a \fraisse class $\KK$ is  called the \textit{\fraisse limit }of $\KK$.

Recall that for an infinite cardinal $\kappa$, a first-order theory $T$  is called \textit{$\kappa$-categorical } if it has exactly one, up to isomorphism, model of size $\kappa$. A structure $\M$ is called $\kappa$-categorical if its theory $Th(\M)$ is $\kappa$-categorical. 

\begin{thm}[Ryll-Nardzewski, Engeler and Svenonius]\label{Ryll-Nard-Thm}
	If $T$ is a  countable first-order complete theory in a countable language $\LL$, then the following are
equivalent.
\begin{enumerate}
	\item $T$ is $\aleph_0$-categorical. 
	\item For each $n<\omega$ there are only finitely many $n$-types, i.e. $|S_n(T)|<\infty$ for all $n<\omega$.
	\item For every $n<\omega$,
	there are only finitely many formulas $\phi(x_1, \dots, x_n)$ up to equivalence relative to $T$. 
	\item For every countable model $\M$ of $T$, $\aut(\M)$ is oligomorphic. 
\end{enumerate}
\end{thm}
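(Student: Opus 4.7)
The plan is to prove the cycle $(3)\iff(2)\iff(1)\iff(4)$ using Stone duality, a back-and-forth argument, the Omitting Types Theorem, and the orbit--type correspondence in $\omega$-saturated models.

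For $(2)\iff(3)$, the $\LL$-formulas $\phi(x_1,\dots,x_n)$ modulo $T$-equivalence form a Boolean algebra whose Stone space is exactly $S_n(T)$, and a Boolean algebra is finite if and only if its Stone space is finite. For $(2)\implies(1)$, finiteness of each $S_n(T)$ makes it discrete (a compact Hausdorff space with every point open), so every complete type is isolated and hence realized in every model of $T$. Given countable $\mathcal{M},\mathcal{N}\models T$, a back-and-forth argument yields an isomorphism: to extend a finite partial elementary map $f\colon\aa\mapsto\bb$ by $c\in M$, realize in $\mathcal{N}$ the isolating formula of $\tp^{\mathcal{M}}(c/\aa)$ over $\bb$, which is possible since every type is principal. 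Enumerating $M$ and $N$ and alternating produces $\mathcal{M}\cong\mathcal{N}$.

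The direction $(1)\implies(2)$ is the main obstacle. Suppose some $|S_n(T)|$ is infinite; since a compact discrete space is finite, $S_n(T)$ must contain a non-isolated type $p$. The Omitting Types Theorem then produces a countable $\mathcal{N}\models T$ omitting $p$, while Löwenheim--Skolem applied to any model realizing $p$ produces a countable $\mathcal{M}\models T$ realizing $p$. These are non-isomorphic, contradicting $(1)$. The delicate point is checking the hypothesis of Omitting Types---that $p$ is non-principal over $\emptyset$---which is exactly the statement that $p$ is non-isolated in $S_n(T)$, so the topological and model-theoretic formulations of ``isolation'' must be carefully aligned.

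Finally, for $(2)\iff(4)$, under $(2)$ the unique countable model $\mathcal{M}$ is $\omega$-saturated and therefore ultra-homogeneous, so the $\aut(\mathcal{M})$-orbits on $M^n$ coincide with the realized $n$-types in $\mathcal{M}$, of which there are $|S_n(T)|<\infty$; hence $\aut(\mathcal{M})$ is oligomorphic. Conversely, if $(2)$ fails for some $n$, one builds a countable $\mathcal{M}\models T$ realizing infinitely many $n$-types (for instance via a union-of-chains construction that at each stage adjoins a realization of a new type); since distinct types lie in distinct $\aut(\mathcal{M})$-orbits, the automorphism group fails to be oligomorphic, contradicting $(4)$.
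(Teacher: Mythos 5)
The paper does not prove this theorem at all; it simply cites Theorem 2.3.13 of Chang--Keisler, so there is no paper argument to compare against. Your proof is the standard textbook argument, and it is correct. A few small remarks on exposition rather than substance. In your $(2)\Leftrightarrow(3)$ step, the Stone-duality reduction is clean: the Lindenbaum algebra of $n$-ary formulas mod $T$ has Stone space $S_n(T)$, and a Boolean algebra is finite iff its Stone space is. In $(1)\Rightarrow(2)$, the key topological point---that an infinite compact Hausdorff space cannot be discrete, hence $S_n(T)$ infinite forces a non-isolated type---is exactly the hinge, and you correctly flag that ``non-isolated'' and ``non-principal over $\emptyset$'' have to be aligned before invoking Omitting Types. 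In your treatment of $(4)$, note that the forward direction as written actually establishes $(2)\wedge(1)\Rightarrow(4)$, since you appeal to the uniqueness of the countable model to get $\omega$-saturation; this is logically fine because $(2)\Rightarrow(1)$ has already been shown, but it's worth saying. Finally, the union-of-chains construction in the converse deserves one extra sentence: at each stage, if the current countable model $\M_i$ already realizes infinitely many $n$-types you are done; otherwise $S_n(T)$ being infinite guarantees an unrealized type to adjoin, so after $\omega$ steps the union realizes infinitely many $n$-types, which lie in pairwise distinct $\aut$-orbits. With that caveat filled in, the argument is complete.
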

\begin{proof}
See Theorem 2.3.13 of \cite{ChangKeislerBook}.
\end{proof}

\begin{thm}\label{ultrahom+aleph0-cat---->>QE}.
If $M$ is ultrahomogeneous and $\aleph_0$-categorical, then it eliminates quantifiers.  
\end{thm}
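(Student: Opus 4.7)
The plan is to reduce quantifier elimination to showing that in $\M$ the complete type of a tuple is determined by its quantifier-free type, and then use the Ryll-Nardzewski characterization (Theorem~\ref{Ryll-Nard-Thm}) to turn the latter into an actual quantifier-free formula equivalent to any given formula.

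First I would establish the key reduction: for any two tuples $\bar{a}, \bar{b} \in \M^n$, $\qftp(\bar{a}) = \qftp(\bar{b})$ implies $\tp(\bar{a}) = \tp(\bar{b})$. Having the same quantifier-free type means that the componentwise map $\bar{a} \mapsto \bar{b}$ extends uniquely to an isomorphism between the (finite) substructures $\langle \bar{a} \rangle$ and $\langle \bar{b} \rangle$ generated by the tuples. Ultrahomogeneity then promotes this partial isomorphism to an automorphism $g \in \aut(\M)$ with $g(\bar{a}) = \bar{b}$, and since automorphisms preserve all first-order formulas, $\bar{a}$ and $\bar{b}$ satisfy exactly the same formulas.

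Next I would invoke $\aleph_0$-categoricity via Theorem~\ref{Ryll-Nard-Thm}: for each $n$, the Stone space $S_n(T)$ of the theory $T = \Th(\M)$ is finite, and there are only finitely many formulas $\phi(x_1,\ldots,x_n)$ up to $T$-equivalence, hence only finitely many quantifier-free ones. By the preceding reduction, any two distinct complete types must be separated by some quantifier-free formula; taking finite conjunctions, each $p \in S_n(T)$ admits an isolating quantifier-free formula $\psi_p(\bar{x})$ such that $\psi_p \in p$ and $\neg \psi_p \in q$ for every $q \neq p$ in $S_n(T)$.

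Finally, for an arbitrary formula $\phi(\bar{x})$, the set $\{p \in S_n(T) : \phi \in p\}$ is finite, and $\phi$ is equivalent modulo $T$ to the finite quantifier-free disjunction $\bigvee_{\phi \in p} \psi_p(\bar{x})$. The main subtlety lies in the first step: one must know that $\langle \bar{a} \rangle$ is finite so that ultrahomogeneity, phrased for \emph{finite} substructures, applies. For the trees of this paper this is automatic, since iterating $\pred$ reaches $\varepsilon$ in finitely many steps and $\sqcap$ introduces at most $\binom{n}{2}$ additional elements; for a more general abstract statement one either restricts to relational languages or explicitly assumes that finitely generated substructures are finite.
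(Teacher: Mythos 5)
Your proof is correct and takes essentially the same route as the paper: both hinge on showing that ultrahomogeneity forces the quantifier-free type of a tuple in $\M$ to determine its complete type, and both then invoke $\aleph_0$-categoricity (via Ryll--Nardzewski) to upgrade this to quantifier elimination for the theory. The paper compresses the final step into ``it is enough to show that $\qftp$ determines $\tp$ in any model'' and uses isolation of types to pull realizations back into $\M$, whereas you make the conclusion explicit by building, for each of the finitely many types in $S_n(T)$, an isolating quantifier-free formula and writing an arbitrary $\phi$ as a finite disjunction of these; you also correctly flag the point the paper leaves tacit, namely that $\langle\bar a\rangle$ must be finite for the stated notion of ultrahomogeneity to apply, which holds for $\LL_t$-trees since closure under $\pred$ already yields a finite set closed under $\sqcap$.
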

\begin{proof}
	It is enough to show that for every tuples $\aa$ and $\bb$ in any model of $T=Th(\M)$ if $\qftp(\aa)=\qftp(\bb)$, then $\tp(\aa)=\tp(\bb)$. Suppose, towards a contradiction, that there are $\aa, \bb$ in some model $\N\models T$ such that $\tp(\aa)\neq \tp(\bb)$, but $\qftp(\aa)=\qftp(\bb)$. 
	Since $T$ is $\aleph_0$-categorical, there are finitely many formulas in $n$ free variables up to equivalence relative to $T$. So every type is isolated. Thus $\tp(\aa)$ and $\tp(\bb)$ have realizations in $M$. We call those realizations $\aa'$ and $\bb'$. Since $\qftp(\aa')=\qftp(\bb')$, $\M[\aa']\cong \M[\bb']$. By ultra-homogeneity, we get an automorphism $f\in \aut(\M)$ such that $f(\aa')=\bb'$. Therefore $\tp(\aa)=\tp(\bb)$ which is a contradiction.
	\end{proof}

\begin{cor}
The  \fraisse limit of $\KK$ eliminates quatifiers.
\end{cor}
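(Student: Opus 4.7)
The plan is to apply Theorem \ref{ultrahom+aleph0-cat---->>QE} directly. Letting $\M$ denote the \Fraisse limit of $\KK$, I would verify that $\M$ is both ultra-homogeneous and $\aleph_0$-categorical. Ultra-homogeneity is immediate from Theorem \ref{fras}: the $\KK$-ultra-homogeneity clause together with $\KK$-closedness (every finite $X\subset_{\fin} M$ is contained in some $A\in\KK$ with $A\leq \M$) upgrades $\KK$-ultra-homogeneity to full ultra-homogeneity, since any partial isomorphism between finite substructures of $\M$ is a partial isomorphism between members of $\KK$.

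For $\aleph_0$-categoricity I would invoke the Ryll-Nardzewski criterion (Theorem \ref{Ryll-Nard-Thm}) and show that $|S_n(T)|<\infty$ for every $n<\omega$, where $T=\Th(\M)$. The key observation is that in an ultra-homogeneous structure the full type of an $n$-tuple is already determined by its quantifier-free type: if $\aa,\bb\in\M^n$ satisfy $\qftp(\aa)=\qftp(\bb)$, then the map $a_i\mapsto b_i$ is an isomorphism between the finitely generated substructures $\gen{\aa}{\M}$ and $\gen{\bb}{\M}$; both of these lie in $\KK$ by $\KK$-closedness, so by $\KK$-ultra-homogeneity this partial isomorphism extends to an automorphism of $\M$, whence $\tp(\aa)=\tp(\bb)$. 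Consequently $|S_n(T)|$ is bounded by the number of quantifier-free $n$-types realized in members of $\KK$.

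Finally, I would use property \textbf{P0} to bound the number of such quantifier-free types: up to isomorphism there are only finitely many members of $\KK$ generated by at most $n$ elements, and each such structure is finite, so there are only finitely many ways to label the generators by an $n$-tuple. Hence $|S_n(T)|<\infty$, $\M$ is $\aleph_0$-categorical, and Theorem \ref{ultrahom+aleph0-cat---->>QE} applies to give quantifier elimination. There is no real obstacle here — the argument is essentially a bookkeeping exercise — but the subtle step, and the one that should be stated cleanly, is the passage from $\qftp(\aa)=\qftp(\bb)$ to $\tp(\aa)=\tp(\bb)$, which is precisely where $\KK$-closedness and $\KK$-ultra-homogeneity work in tandem.
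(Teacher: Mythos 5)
Your proposal is correct and matches the only natural route: the paper itself gives no written proof and simply places the corollary after Theorem \ref{ultrahom+aleph0-cat---->>QE}, so the intended argument is precisely what you supply, namely verifying the two hypotheses of that theorem. One point worth making explicit: the step from ``finitely many isomorphism types of $\leq n$-generated members of $\KK$'' to ``finitely many quantifier-free $n$-types'' uses that members of $\KK$ are actually \emph{finite}, not merely finitely generated --- for an infinite finitely generated structure the orbits of generating $n$-tuples under its automorphism group need not be finite --- so your parenthetical ``each such structure is finite'' is doing real work and should be treated as a standing hypothesis (which it is throughout this paper, where $\KK(\Gamma)$ consists of finite trees).
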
 


 \subsection{Asymptotic Classes}
  We recall the definition of asymptotic classes.
 
 \begin{defn}\label{1-dim}
 	Let $\LL$ be a first-order language and let $\KK$ be a class of finite  $\LL$-structures. $\KK$ is called a \emph{1-dimensional asymptotic class} if for  each $\LL$-formula $\phi(x,\yy)$ where $|\yy|=n$, there is  a finite set 
 	$$ \D_\phi=\bigg\{ \big(d_0, \mu_0,\theta_0(\yy)\big), \dots, \big( d_{{k_n}-1},\mu_{{k_n}-1},\theta_{{k_n}-1}(\yy) \big)  \bigg\} \in \P_\fin\bigg(\left\{0,1 \right\}\times [0,\infty)\times  \LL \bigg)$$
 	such that:
 	\begin{enumerate}
 		\item  $\KK\models \bigvee_{i=0}^{k_n-1} \theta_i(\yy)$ and 
 		$\KK\models \big( \theta_i(\yy)\rightarrow \neg\theta_j(\yy)\big)$ for $i\neq j$.
 		
 		\item For $A\in \KK$ and $\bb\in A^n$, if $A\models \theta_i(\bb)$ then $\big| |\phi(A,\bb)|-\mu_i|A|^{d_i}  \big|= \mathrm{o}(|A|^{d_i})$.
 	\end{enumerate}
 	For any number $0 < N < \omega$, we can define the notion of a \emph{$N$-dimensional asymptotic class} by simply  replacing $\left\{0,1 \right\}$  by $\left\{0, \frac{1}{N},\dots, \frac{N-1}{N}, 1\right\}$.
 \end{defn}

\begin{defn}
	Let $T$ be a simple theory. We define the SU-rank or Lascar rank $SU(p)\geq \alpha$ for a type $p$ by induction on $\alpha$:
	\begin{itemize}
		\item $SU(p)\geq 0$ for all types $p$;
		\item $SU(p)\geq \beta+1$ if $p$ has a forking extension $q$ with $SU(q)\geq \beta$;
		\item $SU(p)\geq \lambda$ (for a limit ordinal $\lambda$) if $SU(p)\geq \beta$ for all $\beta<\lambda$. 
	\end{itemize} 
	and the SU-rank of $p$, $SU(p)$, is defined as the maximal $\alpha$ such that $SU(p)\geq \alpha$. If there is no maximum, we set $SU(p)=\infty$.
\end{defn}

\begin{defn}
	A simple theory $T$ is \textit{supersimple} if SU-rank is finite. Equivalently, $T$ is supersimple whenever for every type $p\in S(A)$, there is a  \emph{finite}   subset $A_0\subseteq A$ such that $p$ does not fork over $A_0$. 
\end{defn}

See \cite{cassanovas}, \cite{Wagner-SimpleTheories}, and \cite{KimBook} for more details about super-simplicity and SU-rank.

\begin{prop}\label{supersimple finite su-rank}
Any infinite ultraproduct of an N-dimensional asymptotic class is  supersimple of SU-rank at most N.
\end{prop}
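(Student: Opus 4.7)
The strategy is to transport the asymptotic counting data into the ultraproduct via \los theorem to define a dimension function on one-variable definable sets, and then show this dimension bounds the SU-rank. Fix an infinite ultraproduct $\M = \prod_\U A_i$ with each $A_i \in \KK$. For an $\LL$-formula $\phi(x,\yy)$ and parameters $\bb = [\bb_i]_\U \in M^{|\yy|}$, the partition clause (1) of Definition \ref{1-dim} together with \los theorem selects a unique index $i^\ast < k_{|\yy|}$ with $\M \models \theta_{i^\ast}(\bb)$; set
$$d(\phi(x,\bb)) := d_{i^\ast} \in \left\{0, \tfrac{1}{N}, \ldots, \tfrac{N-1}{N}, 1\right\}.$$
Clause (2) together with $\aleph_1$-saturation of $\M$ (assuming the ultrafilter is countably incomplete, as is automatic in the infinite case) forces $\phi(M,\bb)$ to be infinite iff $d(\phi(x,\bb)) > 0$, and if $\phi(x,\cc) \vdash \psi(x,\bb)$ then $d(\phi) \le d(\psi)$. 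Hence $d$ is a well-defined, monotone, $\aut(\M)$-invariant dimension taking only $N+1$ values.

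Next I would verify that forking strictly decreases $d$. For $p \in S_1(A)$, put $d(p) := \min\{d(\phi) : \phi \in p\}$. If $q \in S_1(B)$ is a forking extension of $p$, some $\phi(x,\cc) \in q$ divides over $A$; the asymptotic estimate $|\phi(A_i,\cc_i)| \sim \mu_\phi |A_i|^{d_\phi}$, applied along an $A$-indiscernible sequence of $\cc$-conjugates together with a counting argument inside each $A_i$, forces $\phi(x,\cc)$ to sit inside an $A$-definable set of strictly larger dimension, so $d(q) < d(p)$. Conversely, if $d(q) = d(p)$ then the positive ``density'' of $\phi(x,\cc)$ in any $A$-definable witness of $p$ blocks $k$-inconsistency of arbitrarily many conjugates, so $q$ does not divide over $A$. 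Since $d$ takes only $N+1$ values, any strictly decreasing forking chain has length at most $N$, yielding $SU(p) \le N$ for every $1$-type over every parameter set.

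Simplicity of $\Th(\M)$ then follows from the standard independence calculus (invariance, symmetry, transitivity, extension, local character) for the relation $\bb \indep_A \cc \iff d(\tp(\bb/A\cc)) = d(\tp(\bb/A))$, and finiteness of SU-rank upgrades this to supersimplicity with the stated bound; for $n$-tuple types one iterates coordinatewise and applies the Lascar inequalities to obtain $SU(p) \le nN < \infty$. The main obstacle is the dividing/dimension-drop equivalence of the previous paragraph: one must leverage the uniformity in parameters encoded by the defining formulas $\theta_i(\yy)$ to control the cardinalities of intersections $\bigcap_{j<k}\phi(A_i,\cc_i^{(j)})$ for $\cc$-conjugates $\cc^{(j)}$ in each finite $A_i$, and then transfer the conclusion to $\M$ via \los. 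This uniform-in-parameters feature of asymptotic classes is precisely what distinguishes them from merely pseudofinite structures and is what makes supersimplicity, rather than just simplicity, available.
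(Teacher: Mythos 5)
The paper offers no proof of this proposition, only a citation to Lemma 4.1 of Macpherson and Steinhorn, so there is no argument of the paper's own to compare against. Your sketch does reconstruct the right overall strategy --- push the asymptotic data through \los to get a dimension function $d$ on one-variable definable sets of $\M$, then show by pigeonhole counting inside the finite factors that dividing forces a strict drop in $d$ --- and that counting step (positive asymptotic density of each conjugate inside a fixed $A$-definable superset is incompatible with $k$-inconsistency of sufficiently many conjugates) is indeed the heart of the cited lemma.

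The genuine gap is in passing from the $1$-type rank bound to \emph{supersimplicity}. You appeal to a ``standard independence calculus'' for the relation given by $d$-equality, but $d$ is defined only on $1$-types, so the relation you write down is not yet meaningful for tuples; and even once it is extended, verifying symmetry and the independence theorem over models --- the two substantive Kim--Pillay axioms needed to identify the relation with forking-independence and conclude simplicity --- is precisely the content the citation covers, not something the dimension function hands you for free. There is also an order-of-deductions problem: your chain argument controls \emph{dividing} extensions, while SU-rank is defined via \emph{forking}, and these coincide only after simplicity is established; likewise the Lascar inequalities you invoke for $n$-tuples presuppose simplicity. The standard repair is to use the same counting to bound a simplicity-detecting rank such as $D(x=x,\phi,k)$ by $N$, deduce simplicity from that bound, and only then read off $SU\le N$ for $1$-types and finiteness of SU-rank for $n$-types.
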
  
\begin{proof}
See Lemma 4.1 in \cite{macpherson-steinhorn-2007}.
\end{proof}

 
 \section{Tree Plans}\label{sec-treeplan}
 
 In order to study the theory of $\aleph_0$-categorical trees, we first need the notion of tree plan.

 \begin{defn}\label{def_tree-plan_I(S)} 
 	\begin{enumerate}
 		\item 
 		
 		A \textit{tree plan} is just a pair $(\Gamma,\lambda)$,   where $\Gamma\subset_\fin\omega^{<\omega}$ is a finite tree (containing $\gen{}{}$) and $\lambda:\Gamma\to\{1,\infty\}$ is a function such that $\lambda(\gen{}{}) = 1$.  
 		
 		\noindent	Under almost all circumstances it's preferable to omit $\lambda$ from the notation and start from the declaration ``$\Gamma = (\Gamma,\lambda)$ is tree plan...''

 		\item \label{I(S)}
 		Given a tree plan $\Gamma = (\Gamma,\lambda)$, we define 
 		$$I(\Gamma) := \lambda^{-1}[\infty] = \left\{\sigma\in\Gamma:\lambda(\sigma) = \infty\right\}.$$
 		We call each member of $I(\Gamma)$ an \textit{infinity-node} of $\Gamma$. 
 	\end{enumerate}
 \end{defn}
 \medskip 
 
 For a given tree plan $\Gamma$ and a set $X$ (possibly infinite), we can define a tree, called $\Gamma(X)$,  by identifying every infinity-node of $\Gamma$ with the set $X$.  Roughly speaking, $\Gamma(X)$ is an extension of $\Gamma$ obtained by replacing each infinity-node with $|X|$-many new nodes, and leaving other nodes to themselves.
 
 \begin{defn}  
 	Let $\Gamma = (\Gamma,\lambda)$ be a tree plan. For any non-empty set $X$, we define a tree 
 	$$\Gamma(X)\subseteq\left(\omega\times\big(X\cup\{\filledstar\}\big)\right)^{<\omega}$$ 
 	and a function $\pi_X:\Gamma(X)\to\Gamma$ as follows:
 	\item $\gen{ (i_0,t_0),...,(i_n,t_n)}{}\in\Gamma(X)$ if $\gen{i_0,...,i_n}{}\in\Gamma$ and for each $k\leq n$:
 	\begin{enumerate}
 		\item if $\lambda(\gen{i_0,...,i_k}{})=1$, then $t_k=\filledstar$;
 		\item  if $\lambda(\gen{i_0,...,i_k}{})=\infty$, then $t_k\in X$.
 	\end{enumerate}
 	We then define $\pi_X$ just by setting $\pi_X{\left(\gen{ (i_0,t_0),...,(i_n,t_n)}{}\right)} = \gen{i_0,...,i_n}{}$, and we define 
 	$$I(X) = \left\{a\in\Gamma(X): \pi_X(a)\in I(\Gamma)\right\} =  \left\{a\in\Gamma(X): \lambda(\pi_X(a))=\infty\right\}$$
 	which is the set of all infinity-nodes of $\Gamma(X)$. Note that for each $\sigma\concat i\in I(\Gamma)$ and $b\in \Gamma(X)$ such that $\pi_X(b) = \sigma$, there is a bijection between $X$ and the set 
 	$$\left\{a\in \Gamma(X): \pred(a) = b \,\,\,\wedge \,\,\,\pi_X(a) = \sigma\concat i\right\}.$$
 \end{defn}
 
 \noindent	For the sake of brevity, for $n<\omega$, we write $\Gamma(n)$ in place of $\Gamma([n])$ and $\pi_n$ in place of $\pi_{[n]}$. 
 

 \begin{obs} 
 	Let $\Gamma = (\Gamma,\lambda)$ be a tree plan. Then for any two sets $X$ and $Y$, if $X\subseteq Y$, then $\Gamma(X)\leq \Gamma(Y)$ and $\pi_X = \pi_Y\r \Gamma(X)$.
 \end{obs}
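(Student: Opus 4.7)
The plan is to unfold the definitions on both sides and observe that every condition for membership in $\Gamma(X)$ is preserved when $X$ is enlarged to $Y$, since the only reference to the set parameter occurs in the clause requiring $t_k \in X$ at infinity-nodes. Because $X \subseteq Y$, this clause is automatically inherited.

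First I would verify set containment. Take an arbitrary $a = \langle(i_0,t_0),\dots,(i_n,t_n)\rangle \in \Gamma(X)$. By definition, $\langle i_0,\dots,i_n\rangle \in \Gamma$, and for each $k \leq n$ either $\lambda(\langle i_0,\dots,i_k\rangle) = 1$ and $t_k = \filledstar$, or $\lambda(\langle i_0,\dots,i_k\rangle) = \infty$ and $t_k \in X \subseteq Y$. In either case the same element $a$ satisfies the defining conditions for membership in $\Gamma(Y)$, so $\Gamma(X) \subseteq \Gamma(Y)$ as subsets of $(\omega \times (Y \cup \{\filledstar\}))^{<\omega}$.

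Next I would check that this containment is actually a substructure inclusion in $\LL_t$, i.e.\ preservation of $\leq$, $\sqcap$, $\pred$, and $\varepsilon$. Both $\Gamma(X)$ and $\Gamma(Y)$ inherit their tree structure from the ambient end-extension ordering on finite sequences, so $\leq$ and the root $\varepsilon = \langle\rangle$ agree trivially. For $\pred$ and $\sqcap$, one only needs to observe that if $a \in \Gamma(X)$ then every initial segment of $a$ also lies in $\Gamma(X)$ (since $\Gamma$ is closed under initial segments and the defining conditions are imposed pointwise along the sequence). Hence $\pred^{\Gamma(Y)}(a) \in \Gamma(X)$ and coincides with $\pred^{\Gamma(X)}(a)$, and similarly $a \sqcap^{\Gamma(Y)} a' = a \sqcap^{\Gamma(X)} a'$ for $a,a' \in \Gamma(X)$, since the meet is the longest common initial segment, which again lies in $\Gamma(X)$.

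Finally, the second assertion is essentially a tautology: $\pi_X$ and $\pi_Y$ are both defined by the rule $\langle(i_0,t_0),\dots,(i_n,t_n)\rangle \mapsto \langle i_0,\dots,i_n\rangle$, which discards the second coordinate regardless of whether it lives in $X \cup \{\filledstar\}$ or $Y \cup \{\filledstar\}$. Thus $\pi_Y \restriction \Gamma(X) = \pi_X$. There is no real obstacle here; the whole statement is a bookkeeping check that the construction $X \mapsto \Gamma(X)$ is functorial with respect to set inclusion, and the proof consists purely of reading off the definitions.
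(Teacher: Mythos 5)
Your proof is correct and complete. The paper states this as an observation without giving a proof, so there is no argument to compare against; your verification simply fills in the routine unfolding of definitions that the paper left implicit, and it correctly identifies the two points that actually need checking (that membership conditions are pointwise along the sequence, hence preserved under enlarging $X$ and inherited by initial segments, and that $\pi_X$, $\pi_Y$ are given by the same coordinate-projection rule).
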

 
 \begin{defn}\label{defn_downward-closed} 
 	For a tree plan $\Gamma$, a node $\sigma\in \Gamma$, a set $X$ and $B\subseteq \Gamma(X)$,
 	\begin{enumerate}
 		\item  We define ${\downarrow} B=\left\{a\in \Gamma(X) : (\exists b\in B) \, a\leq b\right\}$. We say $B$ is \textit{downward closed} if $B={\downarrow}B$.
 		\item We define the \emph{height }of $\sigma$, $\h(\sigma)$, to be the smallest number $k<\omega$ such that $\pred^k(\sigma)=\epsilon$,  i.e.
 		\begin{itemize} 
 			\item $\h(\gen{}{}) = 0$,
 			\item if $\sigma\neq \gen{}{}$, then $\h(\sigma)=k$ if and only if $\pred^k(\sigma)=\epsilon$ and $\pred^{k-1}(\sigma)\neq \epsilon$.
 		\end{itemize}
 		Also, we define the \emph{height of a tree plan} (or in general the height of an arbitrary tree) $\Gamma$ as follows: 
 		$$\h(\Gamma)=\max \left\{ \h(\sigma) : \sigma\in \Gamma\right\}. $$
 	\end{enumerate}
 \end{defn} 
 
 
 
 \begin{defn}\label{defn_tree-closure} 
 	Let $\Gamma = (\Gamma,\lambda)$ be a tree plan. For a set $X$, we define a operator, called \textit{tree closure},
 	$$\tcl=\tcl^{\Gamma(X)}:\P(\Gamma(X))\longrightarrow\P(\Gamma(X))$$ 
 	as follows:
 	\begin{align*}
 	\tcl_0(B) & = \Big\{ \gen{}{} \Big\} \cup \Big\{ a\in \Gamma(X) \, : \, (\exists x\in B)\,\, a\leq x \Big\} = \Big\{ \gen{}{} \Big\} \cup {\downarrow}B,\\
 	\tcl_{n+1}(B) & = \tcl_n(B) \cup \Big\{ a\in \Gamma(X) \, : \, \lambda(\pi(a))=1 \text{ and }  \pred(a)\in \tcl_n(B) \Big\},\\
 	&\\
 	\tcl(B) & = \bigcup_n \tcl_n(B).
 	\end{align*}
 	In other words, for a given $B\subseteq \Gamma(X)$,  $\tcl^{\Gamma(X)}(B)$ is the smallest subset of $\Gamma(X)$ containing ${\downarrow}B\cup \{\gen{}{} \}$ such that for all $a, a'\in \Gamma(X)$, if 
 	$$\textnormal{$a\in \tcl^{\Gamma(X)}(B)$, $\pred(a') = a$, and $\lambda(\pi_X(a')) = 1$,}$$
 	then $a'\in \tcl^{\Gamma(X)}(B)$, too. The set $B$ is said to be \textit{tree-closed} whenever $B=\tcl(B)$.
 	
 \end{defn}
 
 Using the tree closure operator, we formulate another definition which is useful in relating an element $a$ to $\tcl^{\Gamma(X)}(B)$ that may not itself be a member of $\tcl^{\Gamma(X)}(B)$. For $a\in\Gamma(X)$ and $B\subseteq \Gamma(X)$, we set 
 $$[a{\wedge} B; X]^\Gamma = \max\left\{e\in\tcl^{\Gamma(X)}(B):e\leq a\right\}.$$
 Note that $a\in\tcl^{\Gamma(X)}(B)$ if and only  $[a{\wedge} B; X]^\Gamma = a$.
 
 \noindent \begin{rem}
 	From now on, we fix a tree plan $\Gamma= (\Gamma,\lambda)$.  Let $$\sig(\LL_\Gamma) =  \sig(\LL_t)\cup\left\{P_\sigma^{(1)}:\sigma\in\Gamma\right\},$$ 
 	where for each $\sigma\in \Gamma$, by $P_\sigma^{(1)}$ we mean that $P_\sigma$ is a unary relation symbol. 
 	We consider $\Gamma(\omega)$ as an $\LL_\Gamma$-structure with $P_\sigma^{\Gamma(\omega)} = \left\{a:\pi(a) = \sigma\right\}$ for each $\sigma\in\Gamma$.  
 \end{rem} 
  
 \section{$\KK(\Gamma)$ is a \fraisse Class}\label{K(Gamma) is a Fraisse class}
 Suppose $\Gamma$ is a tree plan. We define the following notions.
 \begin{defn}
 	We define the number $\ell(\Gamma)$ to be  
 	one more than the maximum number of non-$\infty$-successors of a node in $\Gamma$ -- i.e. 
 	$$\ell(\Gamma) = 1+\max\left\{\,\Big|\left\{i\in\omega:\lambda(\sigma\concat i)=1\right\}\Big|:\sigma\in\Gamma\right\}.$$
 \end{defn} 
 
 \begin{defn}\label{Def-K(Gamma)} 
 	We define a class of finite trees, $\KK(\Gamma)$, as follows 
 	$$\KK(\Gamma) = \left\{A\in\TT_\Gamma:(\exists n\in \NN^+)\,A\cong\Gamma(n)\right\}.$$
 	where $\TT_\Gamma$ is the class of all finite $\LL_\Gamma$-structures $A$ such that $A\r\LL_t$ is a tree.
 \end{defn}
 
 \begin{obs}
 	$\KK(\Gamma)$ has JEP.  
 \end{obs}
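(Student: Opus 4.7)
The plan is to leverage the preceding observation that $\Gamma(X) \leq \Gamma(Y)$ whenever $X \subseteq Y$, together with the defining fact that every member of $\KK(\Gamma)$ is isomorphic to some $\Gamma(n)$. So the task reduces to finding a single $\Gamma(k)$ into which two given $\Gamma(m)$ and $\Gamma(n)$ embed.

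Concretely, given $A, B \in \KK(\Gamma)$, fix isomorphisms $A \cong \Gamma(m)$ and $B \cong \Gamma(n)$ with $m, n \in \NN^+$, and set $C := \Gamma(k)$ where $k := \max(m,n)$; then $C \in \KK(\Gamma)$ by definition. Since $[m] \subseteq [k]$ and $[n] \subseteq [k]$, the observation yields $\Gamma(m) \leq C$ and $\Gamma(n) \leq C$ as $\LL_\Gamma$-substructures, and composing these substructure inclusions with the chosen isomorphisms gives embeddings $f_A : A \to C$ and $f_B : B \to C$. This verifies \textbf{JEP}.

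I expect no real obstacle here: the content is essentially a re-reading of the previous observation. The only thing to double-check is that the substructure inclusion $\Gamma(m) \leq \Gamma(k)$ is in fact an $\LL_\Gamma$-embedding (not merely an $\LL_t$-embedding), but this is immediate because $P_\sigma^{\Gamma(k)} \cap \Gamma(m) = \{a \in \Gamma(m) : \pi_k(a) = \sigma\} = \{a \in \Gamma(m) : \pi_m(a) = \sigma\} = P_\sigma^{\Gamma(m)}$, using the compatibility $\pi_m = \pi_k \r \Gamma(m)$ from the same observation. Note also that if disjoint \textbf{JEP} were needed, one could take $k = m + n$ and use the two disjoint injections $[m] \hookrightarrow [k]$, $[n] \hookrightarrow [k]$ sending $[m]$ to $\{0, \ldots, m-1\}$ and $[n]$ to $\{m, \ldots, m+n-1\}$ respectively, inducing embeddings of $\Gamma(m)$ and $\Gamma(n)$ into $\Gamma(k)$ with disjoint images outside the tree closure of $\{\gen{}{}\}$.
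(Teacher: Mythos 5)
Your proof is correct and takes essentially the same route as the paper: the paper notes that $\Gamma(m)$ and $\Gamma(n)$ both embed into $\Gamma(m+n)$, while you embed them into $\Gamma(\max(m,n))$, an immaterial difference. The explicit check that the inclusion $\Gamma(m)\leq\Gamma(k)$ respects the predicates $P_\sigma$ (via $\pi_m=\pi_k\r\Gamma(m)$) is a sensible verification that the paper leaves implicit.
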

 \begin{proof}
 	For every $n, m\in \NN^+$, $\Gamma(n)$ and $\Gamma(m)$ embed in $\Gamma(n+m)$ in a natural way.
 \end{proof}	
 
 \subsection{$\KK(\Gamma)$ Has the Amalgamation Property (AP)}
 
 \begin{lemma}\label{extend_pi_embeddings}
 	Let $X$ be an arbitrary non-empty set.
 	Let $B \subseteq \Gamma(X)$ and $c\in  \Gamma(X)$ such that $\pred(c) \in \tcl(B)$, and let $f_0 : \tcl(B) \to \Gamma(\omega)$ be an embedding. Then there is an embedding $f: \tcl(Bc) \to \Gamma(\omega)$ such that $f_0\subseteq f$. 
 \end{lemma}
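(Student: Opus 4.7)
The plan is to analyze $\tcl(Bc)\setminus\tcl(B)$, see that it forms a small, very rigid piece on top of $\pred(c)$, and then extend $f_0$ by hand over this piece. The key observation is that since $\pred(c)\in\tcl(B)$ and $\tcl(B)$ is closed under $\pred$, we have $\downarrow\! c\subseteq\tcl(B)\cup\{c\}$. Iterating the tree-closure operator from $\tcl(B)\cup\{c\}$ therefore adds only nodes strictly above $c$ of the form: all $a\in\Gamma(X)$ with $a>c$ such that every node on the path from $c$ (exclusive) up to $a$ (inclusive) has $\lambda\circ\pi_X$-value $1$. Call this set $S$. Then $\tcl(Bc)=\tcl(B)\cup\{c\}\cup S$, and $S$ is naturally in bijection (via $\pi_X$) with the $\lambda{=}1$-closure of $\pi_X(c)$ in the finite tree $\Gamma$; in particular, $S$ is finite.

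The argument splits into two cases according to $\lambda(\pi_X(c))$. If $\lambda(\pi_X(c))=1$, then by definition of $\tcl$ we already have $c\in\tcl(B)$, and the same reasoning shows $S\subseteq\tcl(B)$, so $\tcl(Bc)=\tcl(B)$ and $f:=f_0$ works. If $\lambda(\pi_X(c))=\infty$, then I would pick $c^*\in\Gamma(\omega)$ with $\pred(c^*)=f_0(\pred(c))$, $\pi_\omega(c^*)=\pi_X(c)$, and $c^*\notin f_0(\tcl(B))$; this is possible since $\lambda(\pi_X(c))=\infty$ provides $\omega$-many candidates over the fixed predecessor with the correct projection. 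For each $a\in S$, I would then set $f(a)$ to be the unique element of $\Gamma(\omega)$ lying above $c^*$ with $\pi_\omega(f(a))=\pi_X(a)$ — uniqueness holding because every step of that chain passes through a $\lambda{=}1$ node. Setting $f:=f_0\cup\{(c,c^*)\}\cup\{(a,f(a)):a\in S\}$ gives a candidate extension.

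It remains to verify that $f$ is an $\LL_\Gamma$-embedding. The $P_\sigma$ predicates are preserved because $\pi_\omega\circ f = \pi_X$ on $\tcl(Bc)$ by construction, and $\pred$ is preserved by how $f$ is defined on the new nodes. Order and meet then follow from the $\pred$-compatibility together with $f_0$ already being an embedding. The delicate point is injectivity: I need $f(a)\neq f_0(e)$ for every $a\in\{c\}\cup S$ and every $e\in\tcl(B)$. For $c$ this is exactly the choice $c^*\notin f_0(\tcl(B))$; for $a\in S$ I would argue by induction on the height of $a$ above $c$, using that $\tcl(B)$ is closed under $\pred$ (and hence $f_0(\tcl(B))$ is closed under $\pred$ inside its own image) — if $f(a)=f_0(e)$ then $\pred(f(a))=f_0(\pred(e))\in f_0(\tcl(B))$, contradicting the inductive hypothesis on $\pred(a)\in\{c\}\cup S$. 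The main obstacle, then, is not the combinatorics of the tree-closure (which is clean because $S$ is a forced rigid piece) but bookkeeping this injectivity argument; everything else reduces to the $\lambda(\pi_X(c))=\infty$ branching being wide enough to dodge the image of $f_0$.
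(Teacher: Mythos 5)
Your proof takes essentially the same route as the paper's: case-split on $\lambda(\pi_X(c))$, dispose of the $\lambda=1$ case for free, and in the $\lambda=\infty$ case choose a target $c^*$ above $f_0(\pred(c))$ with the right $\pi$-value but outside $\img(f_0)$, then extend by the forced, unique lifting of the $\lambda{=}1$ chains above $c$. The difference is one of polish rather than substance: you make explicit the decomposition $\tcl(Bc)=\tcl(B)\cup\{c\}\cup S$, the finiteness and rigidity of $S$, and the inductive injectivity check for the new nodes, all of which the paper compresses into the single sentence ``we look at the tree above $c$ and map each point to its corresponding node above $f(c)$.'' Your version is the more self-contained write-up of the same idea.

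One subtlety is worth flagging, and it is present in the paper too, so it is not a defect specific to your argument. Both proofs need $f_0(\tcl(B))$ to miss at least one of the $\omega$-many candidate targets for $c$. You assert this from ``$\lambda(\pi_X(c))=\infty$ provides $\omega$-many candidates''; the paper argues via its claim that $I\neq\{1,\dots,|X|\}$, which tacitly uses that $X$ (hence $\tcl(B)$) is finite. For the lemma as literally stated, with ``$X$ an arbitrary non-empty set,'' the step can fail: take $\Gamma=\{\gen{}{},\gen{0}{}\}$ with $\lambda(\gen{0}{})=\infty$, $X=\omega$, $B=\{\gen{(0,i)}{} : i\geq 1\}$, $c=\gen{(0,0)}{}$, and $f_0$ the shift $\gen{(0,i)}{}\mapsto\gen{(0,i-1)}{}$ (fixing the root). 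Then $f_0$ is an embedding of $\tcl(B)$ onto all of $\Gamma(\omega)$, and there is no room for $c^*$. All downstream uses of the lemma take $X$ finite, so the intended content is fine, but you may wish to record the finiteness of $\tcl(B)$ as a hypothesis when you invoke the pigeonhole step.
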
  
 \begin{proof} 
 	Let $f_0 : \tcl(B)\to \Gamma(X)$ be an embedding.
 	\begin{itemize}
 		\item If $\lambda(\pi(c))=1$, then $c\in \tcl(B)$ and so $\tcl(B)=\tcl(Bc)$. In this case it is enought to put $f=f_0$.
 		\item Suppose $\lambda(\pi(c))=\infty$.  We need to guarantee that the extended embedding $f$ satisfies the following condition:
 		$$ f_0(\pred(c))=\pred(f(c)).$$
 		Let $f_0(\pred(c))=d$. There are $|X|$-many points in $\Gamma(X)$, say $e_1, \dots, e_{|X|}$, such that for each $1\leq i\leq |X|$ we have
 		$\pred(e_i)=d$ and
 		$\pi(e_i)=\pi(c)$.	Let $I=\big\{ i : e_i\in \img(f_0) \big\}$.
 		
 		\begin{claim}
 			$I\ne \big\{ 1, \dots, |X| \big\}$
 		\end{claim}
 		\begin{proof}
 			If 			$I = \big\{ 1, \dots, |X| \big\}$, then $c\in f_0^{-1}\big[\{e_1, \dots, e_{|X|} \}\big]$. This implies that $c\in \tcl(B)$ which is a contradiction. 
 		\end{proof}
 		Now, let $k=\min\big\{ 1, \dots, |X| \big\}\setminus I $. 	We define $f:\tcl(Bc)\to \Gamma(X)$ such that $f(c)=e_k$. When we fix the image of $c$ under $f$ then we look at the tree above $c$ and map each point to its corresponding node above $f(c)$. 
 	\end{itemize}
 \end{proof}
 
 \begin{lemma}\label{rearrange}
 	Let $m\leq n<\omega$, and suppose $h:\Gamma(m)\to\Gamma(n)$ is an embedding of $\LL_\Gamma$-structures.
 	\begin{enumerate}
 		\item For all $a\in\Gamma(m)$, $\pi_m(a)=\pi_n(h(a))$.
 		\item There is an automorphism $g\in Aut(\Gamma(n))$ such that $g\circ h = id_{\Gamma(m)}$.
 	\end{enumerate}
 \end{lemma}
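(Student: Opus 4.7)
The plan is to read part~(1) directly off the definition of the language $\LL_\Gamma$, and then to prove part~(2) by an inductive level-by-level construction of $g$.

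For part~(1), the signature $\LL_\Gamma$ declares a unary predicate $P_\sigma$ for each $\sigma\in\Gamma$, interpreted by $P_\sigma^{\Gamma(X)}=\pi_X^{-1}(\sigma)$. Since $h$ is an $\LL_\Gamma$-embedding it preserves every $P_\sigma$, and so for $a\in\Gamma(m)$ the relation $a\in P_{\pi_m(a)}^{\Gamma(m)}$ forces $h(a)\in P_{\pi_m(a)}^{\Gamma(n)}$, which is exactly $\pi_n(h(a))=\pi_m(a)$.

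For part~(2), I identify $\Gamma(m)\leq\Gamma(n)$ via the natural inclusion supplied by the preceding observation, and set $S:=h[\Gamma(m)]\subseteq\Gamma(n)$. Because $h$ preserves $\pred$, for each $a=h(a_0)\in S$ with $a\neq\varepsilon$ we have $\pred(a)=h(\pred(a_0))\in S$, so $S$ is downward closed. My target is an automorphism $g\in\aut(\Gamma(n))$ with $g\r S=h^{-1}$; for then $g\circ h=\mathrm{id}_{\Gamma(m)}$ is immediate. I build $g$ by induction on height, starting with $g(\varepsilon)=\varepsilon$. Assume $g$ has been defined as a $\pi$- and $\pred$-preserving bijection on the nodes of height $\leq k$ that agrees with $h^{-1}$ on every element of $S$ of height $\leq k$. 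To extend to height $k+1$, fix $b$ of height $k$, write $b':=g(b)$ (so $\pi_n(b)=\pi_n(b')$ by part~(1)), and for each successor $\sigma\concat i$ of $\pi_n(b)$ in $\Gamma$ choose a bijection from the children of $b$ with $\pi$-image $\sigma\concat i$ onto the children of $b'$ with the same $\pi$-image. If $\lambda(\sigma\concat i)=1$, both sets are singletons and the map is forced; if $\lambda(\sigma\concat i)=\infty$, both sets have cardinality $n$, and when $b=h(b_0)\in S$ the constraint $g\r S=h^{-1}$ forces the $m$ children of $b$ lying in $S$ to go to the $m$ children of $b_0$ lying in $\Gamma(m)$, while the remaining $n-m$ children of $b$ may be paired with the remaining $n-m$ children of $b_0$ by any bijection; when $b\notin S$ no constraint applies and any bijection works. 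The resulting $g$ preserves $\varepsilon$, $\pred$, and each $P_\sigma$, hence also $\leq$ and $\sqcap$, so it is an $\LL_\Gamma$-automorphism with the required property.

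The only real subtlety, more a consistency check than an obstacle, is that the stage-$(k+1)$ requirement imposed by $g\r S=h^{-1}$ must agree with the value of $g$ already chosen at stage $k$ on each $b\in S$; this is guaranteed precisely by the downward closedness of $S$ together with the identity $h\circ\pred=\pred\circ h$.
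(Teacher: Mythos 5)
Your proof is correct, and it takes a genuinely different route from the paper's. Part~(1) you read off from preservation of the $P_\sigma$'s, which is the only reasonable argument and is what the paper tacitly assumes (it never addresses part~(1) separately). For part~(2) the paper proceeds by an \emph{element-by-element} back-and-forth: starting from $h$, it repeatedly invokes Lemma~\ref{extend_pi_embeddings} to add one new point $a$ with $\pred(a)$ already in the domain, passing through $\tcl$-closed sets, until the domain exhausts $\Gamma(n)$; finiteness then forces the resulting embedding $\Gamma(n)\to\Gamma(n)$ to be an automorphism, and inverting it yields the required $g$. (Strictly speaking the paper constructs an automorphism extending $h$ rather than $h^{-1}$, so an inversion at the last step is needed but left implicit.) You instead build $g\supseteq h^{-1}$ directly by a \emph{level-by-level} construction on height, at each node choosing bijections between the $\pi$-fibers of children, with the constraint from $h^{-1}$ being consistent by downward closedness of $S=h[\Gamma(m)]$. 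Your version is more self-contained (it does not reuse Lemma~\ref{extend_pi_embeddings}) and makes the cardinality count at infinity-nodes explicit, and it sidesteps the inversion issue by aiming at $h^{-1}$ from the start; the paper's version is shorter because it leans on machinery already proved. One tiny slip: the parenthetical ``so $\pi_n(b)=\pi_n(b')$ by part~(1)'' should instead cite your inductive hypothesis that $g$ is $\pi$-preserving on heights $\leq k$ (part~(1) is about the given embedding $h$, not the map $g$ you are constructing); this does not affect correctness.
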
 
 \begin{proof}
 	Let an embedding $f: \Gamma(m)\to \Gamma(n)$ be given.
 	First, note that $\Gamma(m)\leq \Gamma(n)$ and $\Gamma(m)$ is $\tcl$-closed, i.e. $\tcl\big(\Gamma(m) \big)=\Gamma(m)$. 
 	Pick a point $a\in \Gamma(n)\setminus \Gamma(m)$ such that $\pred(a)\in \Gamma(m)$. Apply Lemma \ref{extend_pi_embeddings} and get an embedding $f_1: \tcl\big(\Gamma(m)a \big)\to\Gamma(n)$. Now, pick an element $b\in \Gamma(n)\setminus \tcl\big(\Gamma(m)a \big) $ but $\pred(b)\in \tcl\big(\Gamma(m)a \big)$. Again we apply Lemma \ref{extend_pi_embeddings} and get a an embedding $f_2:\tcl\big(\Gamma(m)ab \big)\to \Gamma(n)$. 	We iterate this process until we run out of elements to put in the domain. So after finitely many steps we get an embedding $g: \Gamma(n)\to \Gamma(n)$. Since $\Gamma(n)$ is finite, $g$ must be surjective as well. Hence  $g\in Aut\big( \Gamma(n)\big)$ and $f\subseteq g$.
 \end{proof}
 
 \begin{prop}\label{disjoint-AP}
 	$\KK(\Gamma)$ has disjoint-AP.
 \end{prop}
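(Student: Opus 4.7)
My plan is to build the amalgam by explicit construction, first normalizing the two embeddings via \Cref{rearrange}, then using a block-disjoint indexing trick at the infinity-nodes.

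Write $A \cong \Gamma(k)$, $B_1 \cong \Gamma(n_1)$, and $B_2 \cong \Gamma(n_2)$ with $k \leq n_1$ and $k \leq n_2$. Applying \Cref{rearrange} to each of $f_1:A \to B_1$ and $f_2:A\to B_2$, I can pre-compose with automorphisms of $B_1$ and $B_2$ to assume without loss of generality that $f_1$ and $f_2$ are both the canonical inclusion $\Gamma(k) \hookrightarrow \Gamma(n_i)$ coming from $\{1,\dots,k\}\subseteq\{1,\dots,n_i\}$. Set $N = n_1 + n_2 - k$ and declare $C = \Gamma(N)$. Take $f'_1 : B_1 \to C$ to be the canonical inclusion $\Gamma(n_1) \hookrightarrow \Gamma(N)$ induced by $\{1,\dots,n_1\}\subseteq\{1,\dots,N\}$.

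For $f'_2$, define a bijection
$$\beta : \{1,\dots,n_2\} \longrightarrow \{1,\dots,k\}\cup\{n_1+1,\dots,N\}$$
by $\beta(j) = j$ for $j\leq k$ and $\beta(j) = n_1 + (j-k)$ for $j > k$. Extend $\beta$ coordinate-wise on the second coordinates of the pairs that form elements of $\Gamma(n_2)$: on a typical node $\gen{(i_0,t_0),\dots,(i_r,t_r)}{}\in\Gamma(n_2)$, set
$$f'_2\bigl(\gen{(i_0,t_0),\dots,(i_r,t_r)}{}\bigr) \;=\; \gen{(i_0,t'_0),\dots,(i_r,t'_r)}{},$$
where $t'_\ell = \filledstar$ if $t_\ell = \filledstar$ and $t'_\ell = \beta(t_\ell)$ otherwise. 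Because $\pi_{N}\circ f'_2 = \pi_{n_2}$, the unary predicates $P_\sigma$ are preserved, and the tree operations $\leq$, $\sqcap$, $\pred$, $\varepsilon$ are determined by the first coordinates (and by equality of second coordinates at a given $\pi$-value), so $f'_2$ is an $\LL_\Gamma$-embedding.

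It remains to verify the amalgamation and disjointness conditions. On $A = \Gamma(k)$, all second coordinates are either $\filledstar$ or lie in $\{1,\dots,k\}$, on which $\beta$ is the identity; hence $f'_2\circ f_2 = f'_1\circ f_1$ on $A$, and both images coincide with the canonical copy of $\Gamma(k)$ sitting inside $\Gamma(N)$. The image $f'_1[B_1]$ consists of those nodes whose second coordinates lie in $\{1,\dots,n_1\}\cup\{\filledstar\}$, while $f'_2[B_2]$ consists of nodes whose second coordinates lie in $\{1,\dots,k\}\cup\{n_1+1,\dots,N\}\cup\{\filledstar\}$; their intersection is precisely those nodes with second coordinates in $\{1,\dots,k\}\cup\{\filledstar\}$, which is exactly the copy of $\Gamma(k)$ already identified with $f'_1\circ f_1[A] = f'_2\circ f_2[A]$. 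The only real content is the normalization step via \Cref{rearrange}; once the two embeddings are replaced by canonical inclusions, the disjoint amalgam is obtained simply by reindexing the infinity-successors into two disjoint blocks that share $\{1,\dots,k\}$, so no serious obstacle arises.
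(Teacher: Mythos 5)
Your proof is correct and takes essentially the same route as the paper: use Lemma \ref{rearrange} to normalize $f_1,f_2$ to canonical inclusions of $\Gamma(k)$ into $\Gamma(n_1)$ and $\Gamma(n_2)$, then amalgamate inside $\Gamma(N)$ by reindexing the second coordinates of $B_2$ via an injection whose image meets $\{1,\dots,n_1\}$ only in $\{1,\dots,k\}$. The only cosmetic differences are that you take the minimal $N=n_1+n_2-k$ where the paper allows any $n\geq n_1+n_2$, and that you spell out the induced embedding $f'_2$ coordinate-wise rather than simply invoking the fact that an injection of index sets induces an embedding of the corresponding $\Gamma(\cdot)$ structures.
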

 \begin{proof}
 	Let $A_0, A_1, A_2\in \TT(\Gamma)$ and embeddings $f_i: A_0\to A_i$ for $i=1, 2$ be given. There are $n_i\in \NN^+$ and isomorphisms $u_i: A_i\to \Gamma(n_i)$ for $i<3$. By Lemma \ref{rearrange}   there are automorphisms $g_i\in Aut(\Gamma(n_i))$  such that $g_i\circ\big(u_i\circ f_i\circ u_0^{-1} \big) =id_{\Gamma(n_0)}$ for $i=1,2$. Now let $B=\Gamma(n)$ where $n\geq n_1+n_2$.  Let $\iota_1: n_1\to n$ be the inclusion map, and $\iota_2:n_2\to n$ be the following map: 
 	$$k\mapsto \begin{cases} 
 	k & k\leq n_0 \\
 	k+n_1 & \text{otherwise} 
 	\end{cases}$$ This gives us embeddings $\hat{\iota}_i: \Gamma(n_i)\to \Gamma(n)$ for $i= 1, 2$ such that $\hat{\iota}_1[\Gamma(n_1)]\cap \hat{\iota}_2[\Gamma(n_2)]=\Gamma(n_0)$.   So if we choose $B=\Gamma(n)$ as the amalgam, and consider the embeddings $j_i:=\hat{\iota}_i \circ g_i\circ u_i: A_i\to B$,   then $j_1[A_1]\cap j_2[A_2]=j_1\circ f_1 [A_0]=j_2\circ f_2[A_0]$, for $i=1,2$.  
 	
 	\begin{center}
 		\begin{tikzcd} & A_1 \arrow[r, "\cong", "u_1"' ] & \Gamma(n_1) \arrow[r, "\cong", "g_1"' ] &  \Gamma(n_1) \arrow[dr, "\hat{\iota}_1" ] \\
 		A_0  \arrow[r, "\cong", "u_0"']  \arrow[ur, "f_1"] \arrow[dr,"" ,"f_2"'] & \Gamma(n_0)   \arrow[ur, "u_1\circ f_1\circ u_0^{-1}"']  \arrow[dr,"" ,"u_2\circ f_2\circ u_0^{-1}"] & & & B:=\Gamma(n)  \\ 
 		& A_2 \arrow[r, "\cong", " u_2 "'] &\Gamma(n_2) \arrow[r, "\cong", "g_2"'] & \Gamma(n_2) \arrow[ur, "\hat{\iota}_2"']  
 		\end{tikzcd} 
 	\end{center}
 \end{proof}
 
 
 \begin{obs}\label{permutation} 
 	Let $\Gamma = (\Gamma,\lambda)$ be a  tree plan. For any set $X$ and any permutation $f\in\sym(X)$, $f$ induces an automorphism $\hat{f}\in \aut(\Gamma(X))$. 
 \end{obs}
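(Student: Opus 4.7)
The plan is to define $\hat{f}$ by the obvious coordinatewise action on the second coordinates of the tuples describing nodes of $\Gamma(X)$, and then verify that this action is well-defined, bijective, and preserves each symbol of $\LL_\Gamma$.

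Explicitly, for a node $a = \gen{(i_0,t_0),\dots,(i_n,t_n)}{} \in \Gamma(X)$, I would set
$$\hat{f}(a) = \gen{(i_0,s_0),\dots,(i_n,s_n)}{}, \quad \text{where } s_k = \begin{cases} \filledstar & \text{if } t_k = \filledstar, \\ f(t_k) & \text{if } t_k \in X. \end{cases}$$
The first step is to check that $\hat{f}(a) \in \Gamma(X)$. This is immediate from the definition of $\Gamma(X)$: membership is determined by the sequence $\gen{i_0,\dots,i_n}{} \in \Gamma$ and by whether $t_k = \filledstar$ or $t_k \in X$ in each coordinate according to the value of $\lambda(\gen{i_0,\dots,i_k}{})$, and both of these data are preserved under the coordinatewise action of $f$. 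Bijectivity follows from the functorial identity $\widehat{f\circ g} = \hat{f}\circ\hat{g}$ and $\widehat{\mathrm{id}_X} = \mathrm{id}_{\Gamma(X)}$, which give $(\hat{f})^{-1} = \widehat{f^{-1}}$.

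Next, I would verify that $\hat{f}$ preserves the symbols of $\LL_\Gamma$. The constant $\epsilon = \gen{}{}$ is fixed since $\hat{f}$ acts trivially on the empty tuple. The relation $\leq$ and the functions $\pred$ and $\sqcap$ depend only on the initial-segment structure of the tuples, which is preserved coordinatewise; specifically $\pred(\hat{f}(a)) = \hat{f}(\pred(a))$ by stripping the last coordinate, $\hat{f}(a) \leq \hat{f}(b)$ iff $a \leq b$ since both are the initial-segment relation, and $\hat{f}(a \sqcap b) = \hat{f}(a) \sqcap \hat{f}(b)$ since the meet is computed as the longest common initial segment and $\hat{f}$ acts in the same way in each coordinate on both $a$ and $b$. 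Finally, for each $\sigma \in \Gamma$ the predicate $P_\sigma$ is defined via $\pi_X(a)$, which only reads off the first coordinates $i_0,\dots,i_n$; these are untouched by $\hat{f}$, so $\pi_X(\hat{f}(a)) = \pi_X(a)$ and hence $P_\sigma$ is preserved.

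There is no real obstacle here: the entire content is the observation that the construction of $\Gamma(X)$ in Definition \ref{def_tree-plan_I(S)}\eqref{I(S)} is functorial in $X$, with the structural symbols of $\LL_\Gamma$ all read off from data (initial segments of tuples, first coordinates $i_k$, presence of $\filledstar$ versus $X$) that are invariant under the coordinatewise action of any permutation of $X$. The only point that requires a line of care is confirming that the bijection $\hat f$ genuinely lands inside $\Gamma(X)$ rather than the ambient set $\left(\omega \times (X \cup \{\filledstar\})\right)^{<\omega}$, and this is handled by the observation above that the defining conditions for $\Gamma(X)$ only see the $i_k$'s and the type (star or non-star) of each $t_k$.
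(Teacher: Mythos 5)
Your proof is correct. The paper states this as an unproved Observation, so there is no "paper proof" to compare against; your argument is exactly the natural verification one would write down: define $\hat f$ by the coordinatewise action of $f$ on the second entries, check it lands in $\Gamma(X)$ (which follows because membership in $\Gamma(X)$ depends only on the sequence of first coordinates and on whether each $t_k$ is $\filledstar$ or in $X$, both of which are invariant under $f$), get bijectivity from functoriality, and confirm that $\epsilon$, $\leq$, $\pred$, $\sqcap$, and each $P_\sigma$ are preserved because they only read initial-segment structure and the first coordinates. No gaps.
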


 \begin{notation}
 	For brevity, we define
 	$Th(\Gamma):= Th(\Gamma(\omega))$.
 \end{notation}
 
 \begin{prop}\label{Gamma(S)_generic}
 	$\Gamma(\omega)$ is a copy of the generic model of $\KK(\Gamma)$. 
 \end{prop}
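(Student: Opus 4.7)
The plan is to apply the uniqueness clause of the Fraïssé theorem (Theorem \ref{fras}). Having established that $\KK(\Gamma)$ is a Fraïssé class (via the preceding Observation on JEP and Proposition \ref{disjoint-AP} on disjoint-AP, with \textbf{HP} and \textbf{P0} routine since each $\Gamma(n)$ is determined up to isomorphism by $n$), it suffices to show that the countable $\LL_\Gamma$-structure $\Gamma(\omega)$ is $\KK(\Gamma)$-universal, $\KK(\Gamma)$-closed, and $\KK(\Gamma)$-ultra-homogeneous. Universality is immediate: every $A \in \KK(\Gamma)$ is by definition isomorphic to some $\Gamma(n)$, and the canonical inclusion $\Gamma(n) \hookrightarrow \Gamma(\omega)$ induced by $[n]\subseteq\omega$ is an $\LL_\Gamma$-embedding. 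Closure is also easy: any finite $X \subseteq \Gamma(\omega)$ mentions only finitely many labels from $\omega$ among its coordinates, so $X \subseteq \Gamma(n) \leq \Gamma(\omega)$ for all sufficiently large $n$.

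The substantive step is $\KK(\Gamma)$-ultra-homogeneity, which I would prove by a back-and-forth construction driven by Lemma \ref{extend_pi_embeddings}. Given an embedding $f_0 : A \to \Gamma(\omega)$ with $A \in \KK(\Gamma)$, note first that $A$ is tree-closed, since $A \cong \Gamma(n)$. Enumerate $\Gamma(\omega) = \{a_n : n < \omega\}$, and build inductively a chain of embeddings $f_0 \subseteq f_1 \subseteq f_2 \subseteq \cdots$ whose domains and images are finite tree-closed subsets of $\Gamma(\omega)$. At the forth step, to absorb $a_n$ into the next domain, one expands to $\tcl(\dom(f_n) \cup \{a_n\})$ and applies Lemma \ref{extend_pi_embeddings} iteratively, listing the new elements in an order such that at each stage the next element's predecessor already lies in the current domain. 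The back step, ensuring $a_n \in \img(f_{n+1})$, is the symmetric application to $f_n^{-1}$, whose domain $f_n[\dom(f_n)]$ is tree-closed by hypothesis.

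The main obstacle is justifying the back step: one must know that at every finite stage fresh successors of each infinity-node remain available in $\Gamma(\omega)$ both as targets and as preimages. This is immediate from the construction, since each infinity-node $\sigma \in I(\Gamma)$ has infinitely many successors in $\Gamma(\omega)$ (one per element of $\omega$), while only finitely many have been used at any finite stage, so the hypothesis of Lemma \ref{extend_pi_embeddings} is met on both sides. The union $f := \bigcup_n f_n$ is then an $\LL_\Gamma$-automorphism of $\Gamma(\omega)$ extending $f_0$, and Theorem \ref{fras} identifies $\Gamma(\omega)$ with the Fraïssé limit of $\KK(\Gamma)$.
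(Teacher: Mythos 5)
Your proof follows the same overall route as the paper: verify $\KK(\Gamma)$-universality, $\KK(\Gamma)$-closedness, and a homogeneity property, then invoke the uniqueness clause of Theorem~\ref{fras}, with Lemma~\ref{extend_pi_embeddings} doing the real work in the homogeneity step. The one substantive difference is that the paper proves only $\KK(\Gamma)$-homogeneity in the embedding-extension form (given $A\leq B$ in $\KK(\Gamma)$ and an embedding $f\colon A\to\Gamma(\omega)$, extend to an embedding $g\colon B\to\Gamma(\omega)$ by a single forward pass through $B\setminus A$), which is exactly what the uniqueness clause requires, whereas you construct a full automorphism by back-and-forth, proving $\KK(\Gamma)$-ultra-homogeneity directly. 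The back step is the extra cost: to apply Lemma~\ref{extend_pi_embeddings} to $f_n^{-1}$ you must know that the image of a finite tree-closed set under an embedding into $\Gamma(\omega)$ is again tree-closed, and you correctly supply this; the paper's one-directional argument never needs it. Both versions are valid, the paper's being marginally more economical and yours marginally more self-contained.

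One passing inaccuracy: HP is not ``routine'' for $\KK(\Gamma)$ and in fact fails in general. For example, if the root of $\Gamma$ has only a single successor $\sigma$ with $\lambda(\sigma)=\infty$, then the singleton $\{\epsilon\}$ is an $\LL_\Gamma$-substructure of $\Gamma(1)$ (it contains the constant and is closed under $\pred$ and $\sqcap$) but is isomorphic to no $\Gamma(m)$ with $m\geq 1$. This does not undermine your main argument, because the uniqueness clause you actually invoke uses only universality, closedness, and homogeneity of $\Gamma(\omega)$ --- the paper relies on exactly the same point and does not verify HP either --- but the parenthetical claim that HP is routine should be dropped or replaced by a remark that HP is not needed for the uniqueness argument.
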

 \begin{proof}
 	We show that $\Gamma(\omega)$ has the following three properties:
 	\begin{itemize}
 		\item $\KK(\Gamma)$-universality: for every $A \in \TT(\Gamma)$, there is an embedding $A \to \Gamma(\omega)$.
 		\item $\KK(\Gamma)$-homogeneity: for any $A, B \in  \TT(\Gamma)$  with $A\leq B$ and any embedding $f: A \to \Gamma(\omega)$, there is an
 		embedding $g: B\to \Gamma(\omega)$ such that $f\subseteq g$.
 		\item $\KK(\Gamma)$-closedness: for every finite $X \subset_\fin \Gamma(\omega)$,  there is an $A \in \TT(\Gamma)$ such that $X \subseteq A\leq \Gamma(\omega)$.
 	\end{itemize}
 	
 	For $\KK(\Gamma)$-universality, let $A\in \TT(\Gamma)$. Then for some $n\in \NN^+$ there is an isomorphism $f: A\cong \Gamma(n) $.  Let $g:n\to S$ be an injection map. Then $f$ induces an embedding $\hat{g}: \Gamma(n)\to \Gamma(\omega)$. Hence  $\hat{g}\circ f: A\to \Gamma(\omega)$ is an embedding. 
 	
 	For $\KK(\Gamma)$-homogeneity, let $A, B\in \TT(\Gamma)$, $A\leq B=\Gamma(n)$ for some $n\in \NN^+$, and let $f: A\to \Gamma(\omega)$ be a embedding. First, we observe that in $\Gamma(n)$, $\tcl(A)=A$.  Pick a point $a_1\in B\setminus A$ such that $\pred(a_1)\in A$. By Lemma \ref{extend_pi_embeddings}, there is a embedding $f_1:\tcl(Aa_1)\to \Gamma(\omega)$ such that $f\subset f_1$. Now, take a point $a_2\in B\setminus \tcl(Aa_1)$ such that $\pred(a_2)\in \tcl(Aa_1)$. Again we apply Lemma \ref{extend_pi_embeddings} to get a embedding $f_2: \tcl(Aa_1a_2)\to \Gamma(\omega)$ such that $f_1\subset f_2$. We do this process untill we run out of elements to put in the domain. Since $B$ is finite, after finitely many iterations we get a embedding $g:B \to \Gamma(\omega)$ which extends $f$. 
 	
 	For $\KK(\Gamma)$-closedness, let $X\subset_\fin \Gamma(\omega)$. There is a finite subset $S_X\subset_\fin S$ which is big enough such that $X\subseteq\Gamma(S_X)$. Hence if we put $A=\Gamma(S_X)$, then $X \subseteq A\leq \Gamma(\omega)$.
 \end{proof}

 \begin{cor} 
 	$Th(\Gamma)$ is $\aleph_0$-categorical and has quantifier elimination.
 \end{cor}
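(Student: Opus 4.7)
The plan is to derive both claims from material already in hand: Theorem \ref{ultrahom+aleph0-cat---->>QE} says that ultrahomogeneous plus $\aleph_0$-categorical forces quantifier elimination, so the whole corollary reduces to two tasks, namely (i) ultrahomogeneity of $\Gamma(\omega)$ and (ii) $\aleph_0$-categoricity of $Th(\Gamma)$.

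Task (i) comes essentially for free from Proposition \ref{Gamma(S)_generic}: that result establishes $\KK(\Gamma)$-homogeneity and $\KK(\Gamma)$-closedness of $\Gamma(\omega)$, and since $\Gamma(\omega)$ is countable, the remark following the definition of $\KK$-homogeneity upgrades this to genuine ultrahomogeneity. For task (ii), I would use the Ryll--Nardzewski criterion (Theorem \ref{Ryll-Nard-Thm}) in the form $|S_n(T)|<\infty$ for every $n<\omega$. By ultrahomogeneity, $\tp(\aa)=\tp(\bb)$ whenever $\qtp(\aa)=\qtp(\bb)$, and $\qtp(\aa)$ in turn is determined by the isomorphism type of the $\LL_\Gamma$-substructure of $\Gamma(\omega)$ generated by $\aa$.

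The crucial observation is that, in the signature $\sig(\LL_\Gamma)$ containing $\pred$, $\sqcap$, $\epsilon$, and the unary predicates $P_\sigma$ for $\sigma\in\Gamma$, any $n$-tuple generates a \emph{finite} substructure whose size is bounded by a function of $n$ and $\Gamma$ alone. Indeed, closing under $\pred$ contributes at most $\h(\Gamma)$ ancestors per coordinate (because $\pi$ commutes with $\pred$ and takes values in the finite tree $\Gamma$, so $\pred$-chains terminate in at most $\h(\Gamma)$ steps), and closing under $\sqcap$ contributes at most $\binom{n}{2}$ additional meets. Such generated substructures embed into members of $\KK(\Gamma)$; since the number of finite labeled trees of size at most $N$ with labels drawn from the finite set $\Gamma$ is finite, there are only finitely many isomorphism types of $\LL_\Gamma$-substructures generated by an $n$-tuple. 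This yields $|S_n(T)|<\infty$, hence $\aleph_0$-categoricity, and combined with (i) and Theorem \ref{ultrahom+aleph0-cat---->>QE} it gives quantifier elimination.

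There is no serious obstacle here; the only point that warrants even a moment of care is checking that $\qtp(\aa)$ really pins down the full labeled isomorphism type of the generated substructure. This reduces to noting that $\pi$ commutes with $\pred$ and $\sqcap$, so the $P_\sigma$-values of every generated element are forced by the $P_\sigma$-values of $\aa$ together with the tree structure; once this is observed, the rest of the argument is routine bookkeeping.
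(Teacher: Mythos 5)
Your proof is correct and follows the same high-level route the paper intends (Fra\"iss\'e genericity gives ultrahomogeneity, then Theorem \ref{ultrahom+aleph0-cat---->>QE} gives quantifier elimination), but you usefully make explicit the $\aleph_0$-categoricity step, which the paper's one-line proof glosses over: citing Theorem \ref{ultrahom+aleph0-cat---->>QE} presupposes $\aleph_0$-categoricity, so one genuinely needs the uniform local finiteness argument you supply. One small simplification worth noting: since $a\sqcap b$ is by definition a common ancestor of $a$ and $b$ and downsets are linearly ordered, every meet of elements of $\aa$ already lies in the $\pred$-closure of $\aa$, so the generated $\LL_\Gamma$-substructure is simply the downward closure together with $\epsilon$, of size at most $n\bigl(\h(\Gamma)+1\bigr)+1$; the extra $\binom{n}{2}$ count is harmless but unnecessary.
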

 \begin{proof}
 	By {\Fraisse} Theorem \ref{fras} and Theorem \ref{ultrahom+aleph0-cat---->>QE}.
 \end{proof}
 
 \subsection{Ehrenfeucht–{\Fraisse} games on $\Gamma(X)$'s}\label{section-EF-game} 
 The Ehrenfeucht–{\Fraisse} game (for brevity EF game) is a method for determining whether two structures are elementarily equivalent 
 (for more details see \cite{spencer-2001} and \cite{book-flum}). EF game has many applications in finite model theory (see \cite{book-flum}). 
 \begin{fact}[see \cite{big-hodges}]
 	Let $\M$ and $\N$ be two structures of the same language. Then the following statements are equivalent:
 	\begin{enumerate}
 		\item $\M$ and $\N$ satisfy the same first-order \textit{unnested} sentences of quantifier rank $n$: i.e., $\M\equiv_n \N$.
 		\item Player II has a winning strategy for $n$ rounds of the EF game played on $\M$ and $\N$.
 	\end{enumerate}
 	See Section 2.6 of \cite{big-hodges} for definition of unnested sentence. Also for more details see Corollary 2.6.2 and Theorem 3.3.2 of \cite{big-hodges}.  
 \end{fact}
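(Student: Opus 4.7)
The plan is to prove both directions by induction on $n$. The key preliminary observation is that for a fixed finite signature, and for each $n,k<\omega$, there are only finitely many unnested formulas of quantifier rank $\leq n$ in $k$ free variables up to logical equivalence; consequently, for any tuple $\bar{a}$ in a structure $\M$, one can form the rank-$n$ Hintikka formula $\chi^n_{\bar{a}}(\bar{x})$ as the finite conjunction of all unnested formulas of quantifier rank $\leq n$ satisfied by $\bar{a}$. Two tuples then satisfy the same rank-$n$ unnested formulas iff their Hintikka formulas coincide, and I will abbreviate this by $(\M,\bar{a}) \equiv_n (\N,\bar{b})$.

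For (2) $\Rightarrow$ (1), I would assume Player II has a winning strategy $\tau$ in the $n$-round EF game on $\M$ and $\N$ and prove by induction on the number of remaining rounds the strengthening: if a partial play consistent with $\tau$ has produced tuples $\bar{a}$ in $\M$ and $\bar{b}$ in $\N$ with $n-k$ rounds remaining, then $(\M,\bar{a}) \equiv_{n-k} (\N,\bar{b})$. The base case $n-k=0$ is Player II's winning condition: the map $\bar{a}\mapsto\bar{b}$ is a partial isomorphism on atomic unnested formulas, which is exactly $\equiv_0$. For the inductive step, a rank-$(n-k)$ sentence $\exists x\,\phi(\bar{a},x)$ holding in $\M$ is witnessed by some $a\in M$; Player I plays $a$, the response $b=\tau(a)$ satisfies $(\M,\bar{a}a) \equiv_{n-k-1} (\N,\bar{b}b)$ by the inductive hypothesis, whence $\N\models\phi(\bar{b},b)$ and so $\N\models\exists x\,\phi(\bar{b},x)$.

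For (1) $\Rightarrow$ (2), I would describe Player II's strategy directly, maintaining the invariant $(\M,\bar{a})\equiv_{n-k}(\N,\bar{b})$ after $k$ rounds; this holds at $k=0$ by hypothesis. When Player I selects $c\in M$, the sentence $\exists x\,\chi^{n-k-1}_{\bar{a}c}(\bar{a},x)$ has quantifier rank $\leq n-k$ and holds in $(\M,\bar{a})$; hence by the invariant it also holds in $(\N,\bar{b})$, producing $d\in N$ realizing $\chi^{n-k-1}_{\bar{a}c}(\bar{b},\cdot)$, which is equivalent to $(\M,\bar{a}c)\equiv_{n-k-1}(\N,\bar{b}d)$, restoring the invariant. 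The symmetric case (Player I plays in $\N$) is identical. After $n$ rounds the invariant becomes $\equiv_0$, i.e., Player II has produced a partial isomorphism and wins.

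The main obstacle is handling the \emph{unnested} restriction correctly. In a language with function symbols such as $\sqcap$ and $\pred$, general quantifier-free agreement on a tuple is strictly stronger than the tuple's being a partial isomorphism, because of nested terms like $\pred(\pred(x))$ or $x\sqcap\pred(y)$; it is precisely the unnested restriction that makes the base case $\equiv_0$ coincide with Player II's winning condition (partial isomorphism on atomic unnested formulas, equivalently on the atomic diagram of relation and constant symbols applied to images of variables). One must therefore verify carefully that the Hintikka formulas stay inside the unnested fragment at each inductive stage and that the existential quantification step preserves unnestedness, which is routine but easy to miss.
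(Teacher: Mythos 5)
The paper does not include a proof of this Fact; it is stated as a citation to Hodges' textbook (Section 2.6, Corollary 2.6.2, and Theorem 3.3.2 of \cite{big-hodges}). Your argument is a correct and standard back-and-forth proof via rank-$n$ Hintikka formulas restricted to the unnested fragment, and it matches the argument given there in essence. You correctly identify the point that is easy to get wrong in a language with function symbols such as $\LL_t$: it is precisely the unnested restriction that ensures there are only finitely many atomic formulas in a fixed tuple of variables, so that the Hintikka formula is a genuine first-order formula of bounded quantifier rank, and that makes the base case $\equiv_0$ coincide with the partial-isomorphism winning condition for Player II. In a fully written version you would want to make explicit (i) that $(\M,\overline{a})\equiv_m(\N,\overline{b})$ is genuinely symmetric, which holds because negations of rank-$\leq m$ unnested formulas are again rank-$\leq m$ unnested and hence appear as conjuncts of the Hintikka formula; and (ii) that the induction in $(2)\Rightarrow(1)$ is formally a structural induction on the sentence, with the game round doing work only at the quantifier step and the Boolean steps being routine. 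These are book-keeping matters; the approach is sound.
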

 
 \begin{prop}\label{N-Gamma}
 	There is a function $N_\Gamma:\omega\to\omega$ such that for every $k$, for any finite set $X$, if $|X|\geq N_\Gamma(k)$, then $\Gamma(X)\equiv_k\Gamma(\omega)$ --- i.e. Player II has a winning strategy for $k$ rounds of the EF game played on $\Gamma(X)$ and $\Gamma(\omega)$. 
 \end{prop}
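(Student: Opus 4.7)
The plan is to define $N_\Gamma$ explicitly and exhibit a winning strategy for Player II in the $k$-round EF game via a back-and-forth argument built on Lemma \ref{extend_pi_embeddings}. Without loss of generality identify $X$ with an initial segment of $\omega$, so that $\Gamma(X)\leq\Gamma(\omega)$ is an $\LL_\Gamma$-substructure.

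I would first fix a constant $C_\Gamma$, depending only on $\Gamma$, that bounds the increment to a tree-closure caused by one new element:
$$\bigl|\tcl^{\Gamma(\omega)}(S\cup\{c\})\setminus\tcl^{\Gamma(\omega)}(S)\bigr|\leq C_\Gamma$$
for every tree-closed $S\subseteq\Gamma(\omega)$ and every $c\in\Gamma(\omega)$. Such a $C_\Gamma$ exists because the new part consists of $c$, its missing ancestors (at most $\h(\Gamma)$ many), and the $1$-descendants pulled in by those new ancestors, and all such positions are indexed by nodes of the finite tree $\Gamma$; taking $C_\Gamma$ to be, say, the total number of nodes of $\Gamma$ suffices. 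Then set $N_\Gamma(k):=k\cdot C_\Gamma+1$.

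Player II's strategy is to maintain, after $j\leq k$ rounds with moves $\aa$ in $\Gamma(X)$ and $\bb$ in $\Gamma(\omega)$, an isomorphism of $\LL_\Gamma$-substructures
$$h_j:\tcl^{\Gamma(X)}(\aa)\longrightarrow\tcl^{\Gamma(\omega)}(\bb).$$
At round $j+1$, say Player I plays a fresh element $c\in\Gamma(\omega)$. Player II considers the enlarged target $\tcl^{\Gamma(\omega)}(\bb c)$, which exceeds $\tcl^{\Gamma(\omega)}(\bb)$ by at most $C_\Gamma$ elements. Player II then applies Lemma \ref{extend_pi_embeddings} iteratively to $h_j^{-1}$, adding one element at a time by always picking an element whose predecessor is already in the current domain. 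Each extension step at an infinity-successor position must select a previously-unused child of an already-matched predecessor on the $\Gamma(X)$-side; since the number of used children of any given predecessor is bounded above by $|\tcl^{\Gamma(X)}(\aa)|\leq j\cdot C_\Gamma<|X|$, a free slot is always available and the extension succeeds. The dual case, in which Player I plays in $\Gamma(X)$, is analogous (and simpler, as Player II often just takes $h_j(c)$ directly).

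After $k$ rounds the composite map $h_k$ is a partial isomorphism, so Player II wins the $k$-round EF game; by the fact preceding the statement, $\Gamma(X)\equiv_k\Gamma(\omega)$. The main obstacle is the correct calibration of $C_\Gamma$ together with the verification that at every infinity-node the supply of unused slots never runs out; once the uniform bound $k\cdot C_\Gamma<|X|$ is in force, the repeated application of Lemma \ref{extend_pi_embeddings} drives the induction without further subtlety.
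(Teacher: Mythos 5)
Your back-and-forth via $\tcl$-closures, driven by iterated applications of (a relativized form of) Lemma~\ref{extend_pi_embeddings} together with a resource bound guaranteeing free $\infty$-slots in $\Gamma(X)$, is essentially the paper's own argument; the paper keeps the invariant as a one-sided embedding $f_j:\tcl(a_1,\dots,a_j)\to\Gamma(\omega)$ rather than an isomorphism onto $\tcl(\bb)$, but since any such embedding carries a $\tcl$-closed set onto a $\tcl$-closed set the two invariants coincide. One small bookkeeping caveat: $\tcl^{\Gamma(X)}(\emptyset)$ already contains the full 1-skeleton (up to $|\Gamma|$ elements), so $|\tcl^{\Gamma(X)}(\aa)|$ after $j$ rounds is bounded by $(j+1)C_\Gamma$ rather than $jC_\Gamma$; this is harmless --- just take $N_\Gamma(k)=(k+1)C_\Gamma+1$ --- and, similarly, the precise value of $C_\Gamma$ should be checked (a crude bound such as $\h(\Gamma)\cdot|\Gamma|$ is safe even if $|\Gamma|$ alone turns out not to be), none of which affects the existence of a suitable $N_\Gamma$.
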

 \begin{proof}  
 	For a given tree plan $\Gamma$, let's define $N_\Gamma:\omega\to \omega $ with $k\mapsto k+\ell(\Gamma)\cdot \h(\Gamma)$. We show that for every $k\in \NN^+$, if $|X|\geq N_\Gamma(k)=k+\ell(\Gamma)\cdot \h(\Gamma)$, then $\Gamma(X)\equiv_k \Gamma(\omega)$.   Let $f: X\to S$ be an injection. This  gives us an embedding $\hat{f}: \Gamma(X)\to \Gamma(\omega)$. Player I can  pick either a point in $\Gamma(X)$  or $\Gamma(\omega)$. 
 	\begin{itemize} 
 		\item In round 1, we have the following cases.
 		
 		\begin{itemize}
 			\item If Player I chooses  $a_1\in \Gamma(X)$, then Player II can choose $b_1:= \hat{f}(a_1)\in \Gamma(\omega)$. 
 			\item Suppose Player I chooses $b_1\in \Gamma(\omega)$. 
 			\begin{itemize}
 				\item If $b_1\in \img(\hat{f})$, then Player II can choose $a_1:=\hat{f}^{-1}(b_1)$.
 				\item Suppose $b_1\in \Gamma(\omega)\setminus \img(\hat{f})$. Let  $X'\subset_\fin S$  be a subset such that $|X'|=|X|$ and $b_1\in \Gamma(X')$. Let $\beta:f[X]\to X'$ be a bijection and $\hat{\beta}:\Gamma(f[X])\to \Gamma(X')$ be the  isomorphism obtained from $\beta$. Let $g\in Aut(\Gamma(\omega))$ be an automorphism extending $\hat{\beta}$. Then Player II can choose  $a_1:=\hat{f}^{-1}(g^{-1}(b_1))$.
 			\end{itemize} 
 		\end{itemize} 
 		So we end up with $a_1\in \Gamma(X), b_1\in \Gamma(\omega)$ and an embedding $f_1:\tcl(a_1)\to \Gamma(\omega)$ such that $f_1(a_1)=b_1$.
 		
 		\item 	At the end of round $k-1$ we end up with $a_1, \dots a_{k-1}\in \Gamma(X)$,  $b_1,\dots, b_{k-1}\in \Gamma(\omega)$ and an embedding $f_{k-1}: \tcl(a_1,\dots, a_{k-1})\to \Gamma(\omega)$ such that $f_{k-1}(a_i)=b_i$ for each $i=1,\dots, k-1$.
 		 
 		\item  In round $k$, we consider the following cases.
 		\begin{itemize}
 			\item If  Player I chooses $a_k\in \Gamma(X)\setminus \{a_1,\dots, a_{k-1}\}$, then we consider the following two cases.
 			\begin{itemize}
 				\item If $a_k\in \tcl\big(a_1,\dots, a_{k-1}\big)$, then Player II can choose $b_k:=f_{k-1}(a_k)$.
 				\item If $a_k\notin \tcl\big(a_1,\dots, a_{k-1}\big)$,  then  by homogeneity we extend  $f_{k-1}$ to an embedding $f_{k-1}^{\ast}:\Gamma(X)\to \Gamma(\omega)$, so Player II can choose $b_k:=f^{\ast}_{k-1}(a_k)$.
 				
 			\end{itemize}
 			
 			\item If Player I choose $b_k\in \Gamma(\omega)\setminus \{b_1,\dots, b_{k-1}\}$, then we consider the following two cases:
 			\begin{itemize}
 				\item  If $b_k\in \img(f_{k-1})$, then Player II can choose $a_k:=f_{k-1}^{-1}(b_k)$.
 				
 				\item If $b_k\notin \img(f_{k-1})$, then let $t\in \NN^+$ be the smallest number such that $d:=\pred^t(b_k)\in  \img(f_{k-1})$,  i.e. $d =[b_k\wedge\img(f_{k-1})]$ is the most recent ancestor of $b_k$ which is in $\img(f_{k-1})$.  Now let $p(x,y)=\qtp(b_k, d)$ and $c:=f_{k-1}^{-1}(d)$. Since $b_k\notin \img(f_{k-1})$ and $\tcl\big(\img(f_{k-1})\big)=\img(f_{k-1})$, $\big| p\big(\Gamma(\omega), d\big)\big|=\infty$. Since $|X|\geq k+\ell(\Gamma)\cdot\h(\Gamma)$, $p\big(\Gamma(X), c\big)\setminus \tcl(a_1, \dots, a_{k-1})\neq \emptyset $. So Player II can choose $a_k$ to be a member of  $p\big(\Gamma(X), c\big)\setminus \tcl(a_1, \dots, a_{k-1})$.
 			\end{itemize}
 		\end{itemize}
 	\end{itemize}
 \end{proof}

 \begin{prop}
 	Let $\Gamma$ be a tree plan. 
 	For some $n<\omega$, let $p_0(\xx),...,p_{m-1}(\xx)$ be an enumeration of the complete $n$-types of $Th(\Gamma)$, and for each $i<m$, let $\theta_i(\yy)$ be a principal formula of $p_i$.  There is a number $K_n$ such that for any finite sets $X\subset_\fin S$ and any $\bb\in \Gamma(X)^n$, if $|X|\geq K_n$, $\Gamma(X)\models\theta_i(\bb)\iff \Gamma(\omega)\models \theta_i(\bb)$ for each $i<m$. 
 \end{prop}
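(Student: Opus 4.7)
The plan is to reduce this to the Ehrenfeucht--Fra\"iss\'e game criterion already set up in Proposition \ref{N-Gamma}, but in its version for formulas with free variables rather than sentences. Since $Th(\Gamma)$ is $\aleph_0$-categorical, the finite list $\theta_0,\ldots,\theta_{m-1}$ is well-defined, so I would set $r := \max_{i<m}\qr(\theta_i)$ and take
$$K_n := N_\Gamma(n + r),$$
where $N_\Gamma$ is the function from Proposition \ref{N-Gamma}. Identifying the finite set $X$ with a subset of $\omega$, we have $\Gamma(X)\leq \Gamma(\omega)$ as $\LL_\Gamma$-structures, so any $\bb\in\Gamma(X)^n$ is literally the same tuple inside $\Gamma(\omega)$ and $\tcl(\bb)$ has the same meaning in both structures.

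By the standard Ehrenfeucht--Fra\"iss\'e theorem for formulas with parameters, it will then suffice to produce a winning strategy for Player II in the $r$-round EF game played on $\bigl(\Gamma(X),\bb\bigr)$ versus $\bigl(\Gamma(\omega),\bb\bigr)$, starting with the pairing $\bb\leftrightarrow\bb$ already placed: this yields $\Gamma(X)\models\phi(\bb)\iff\Gamma(\omega)\models\phi(\bb)$ for every $\LL_\Gamma$-formula $\phi(\yy)$ of quantifier rank at most $r$, and in particular for each $\theta_i$.

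I would construct this strategy exactly as in the proof of Proposition \ref{N-Gamma}, but starting from the initial partial $\LL_\Gamma$-embedding $f_0 := \mathrm{id}_{\tcl(\bb)}$, which is legal precisely because $\Gamma(X)\leq\Gamma(\omega)$. At each subsequent round $1\leq k\leq r$, Player II extends $f_{k-1}$ to include one more element of the tree-closure, either applying Lemma \ref{extend_pi_embeddings} together with the $\KK(\Gamma)$-homogeneity of $\Gamma(\omega)$ (Proposition \ref{Gamma(S)_generic}) when Player I plays inside $\Gamma(X)$, or else finding a fresh representative in the appropriate infinity-fiber when Player I plays inside $\Gamma(\omega)$.

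The only quantitative point is the fiber-exhaustion step, which is also where the value of $K_n$ is forced: after $k\leq r$ rounds started with the $n$-tuple $\bb$, the set $\tcl(\bb,a_1,\ldots,a_{k-1})$ can occupy at most $(n+k-1)+\ell(\Gamma)\cdot\h(\Gamma)$ elements in any single infinity-fiber of $\Gamma(X)$, so the choice $|X|\geq N_\Gamma(n+r)$ ensures a fresh legal move for Player II. This is exactly the counting already done for Proposition \ref{N-Gamma}, simply shifted by $n$ to absorb the pre-placed parameters. I do not expect a real combinatorial obstacle: the one conceptual point worth care is the upgrade of the EF characterization from sentences to formulas with parameters interpreted by the same tuple in a substructure, and that is standard.
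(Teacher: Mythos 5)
Your proof is correct and uses essentially the same machinery as the paper (the function $N_\Gamma$ from Proposition \ref{N-Gamma} and the Ehrenfeucht--Fra\"iss\'e game). The one place you deviate — setting $K_n := N_\Gamma(n+r)$ rather than $N_\Gamma(r)$, and explicitly running the $r$-round game on the structures with the $n$-tuple $\bb$ pre-placed via the inclusion $\Gamma(X)\leq\Gamma(\omega)$ — is actually the more careful version: the paper appeals to $\Gamma(X)\equiv_{r^\ast}\Gamma(\omega)$, which as stated concerns sentences only, and your shift by $n$ is exactly what is needed to absorb the parameters and make the inference to formulas $\theta_i(\bb)$ rigorous.
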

 
 \begin{proof}
 	Let $\qr(\theta_i)=r_i$ for  $i<m$.   Let $r^\ast = \max\left\{r_i\right\}_{i<m}$. By  Proposition \ref{N-Gamma}  there is a function $N_\Gamma:\omega\to \omega$ such that for every finite set $X\subset_\fin S$, if $|X|\geq N_\Gamma(k)$, then $\Gamma(X)\equiv_{k} \Gamma(\omega)$. Let $K_n=N_\Gamma(r^\ast)$, and let $X\subset_\fin S$ be given such that  $|X|\geq N_\Gamma(r^\ast)=K_n$. Then $\Gamma(X)\equiv_{r^\ast}\Gamma(\omega)$. Since $r^\ast = \max\left\{r_i\right\}_{i<m}$, for any $\bb\in \Gamma(X)^n$, $\Gamma(X)\models\theta_i(\bb)\iff \Gamma(\omega)\models \theta_i(\bb)$ for each $i<m$.
 \end{proof}

 \subsection{Pseudo-finiteness of $\Th(\Gamma)$}  
 
 \begin{thm}
 	$Th(\Gamma)$ is $\KK(\Gamma)$-pseudofinite.
 \end{thm}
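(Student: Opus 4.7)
The plan is to deduce pseudofiniteness directly from Proposition \ref{N-Gamma}, which already furnishes the key quantitative control: for every quantifier rank $k$ there is a threshold $N_\Gamma(k)$ beyond which $\Gamma(X)\equiv_k \Gamma(\omega)$. This is essentially all that is needed, since pseudofiniteness is a sentence-by-sentence property.

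More precisely, I would argue as follows. Let $\phi$ be any $\LL_\Gamma$-sentence such that $\Gamma(\omega)\models\phi$, and set $k=\qr(\phi)$. By Proposition \ref{N-Gamma}, for every finite set $X$ with $|X|\geq N_\Gamma(k)$, Player II has a winning strategy in the $k$-round EF game played on $\Gamma(X)$ and $\Gamma(\omega)$, so $\Gamma(X)\equiv_k\Gamma(\omega)$. In particular $\Gamma(X)\models\phi$. Taking $X=[n]$ for any $n\geq N_\Gamma(k)$, we obtain $\Gamma(n)\in\KK(\Gamma)$ with $\Gamma(n)\models\phi$. Thus every sentence of $Th(\Gamma)$ is satisfied in a member of $\KK(\Gamma)$, which is the definition of $\KK(\Gamma)$-pseudofiniteness.

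Equivalently, and perhaps more cleanly for the record, I would phrase the conclusion through an ultraproduct: pick any non-principal ultrafilter $\U$ on $\NN^+$ and consider the ultraproduct $\M:=\prod_{n\in\NN^+}\Gamma(n)/\U$. For each sentence $\phi$ with $\qr(\phi)=k$, the set $\{n:\Gamma(n)\equiv_k\Gamma(\omega)\}\supseteq\{n:n\geq N_\Gamma(k)\}$ is cofinite, hence in $\U$, so by \los's theorem $\M\models\phi\iff\Gamma(\omega)\models\phi$. Therefore $\M\models Th(\Gamma)$, exhibiting $Th(\Gamma)$ as the theory of an ultraproduct of structures in $\KK(\Gamma)$.

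There is really no substantive obstacle here; the work has already been done in the construction of the function $N_\Gamma$ via the EF-game analysis. The only thing worth being careful about is the precise definition of $\KK(\Gamma)$-pseudofiniteness — one should make sure it means ``every sentence of $Th(\Gamma)$ has a model in $\KK(\Gamma)$'' (equivalently, $Th(\Gamma)$ is the theory of an ultraproduct of structures in $\KK(\Gamma)$) rather than some stronger uniform statement, but in either formulation the argument above delivers what is needed.
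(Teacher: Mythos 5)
Your approach is the paper's approach: invoke Proposition \ref{N-Gamma} sentence by sentence. But there is a gap in the step ``set $k=\qr(\phi)$ \dots\ so $\Gamma(X)\equiv_k\Gamma(\omega)$. In particular $\Gamma(X)\models\phi$.'' In the paper's formulation (following Hodges), $\M\equiv_n\N$ means $\M$ and $\N$ agree on all \emph{unnested} sentences of quantifier rank $n$; it is this that is equivalent to Player II winning the $n$-round EF game. The language $\LL_\Gamma$ contains function symbols ($\pred$, $\sqcap$) and a constant ($\varepsilon$), so an arbitrary sentence $\phi$ need not be unnested, and unnesting can increase quantifier rank (e.g.\ $\forall x\,(\pred^{10}(x)=\varepsilon)$ has quantifier rank $1$, but its unnested equivalent has quantifier rank about $11$). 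Thus $\Gamma(X)\equiv_{\qr(\phi)}\Gamma(\omega)$ does not by itself yield $\Gamma(X)\models\phi$.

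The fix, which is exactly what the paper does, is to first replace $\phi$ by a logically equivalent unnested sentence $\hat\phi$ and take $k=\qr(\hat\phi)$ rather than $k=\qr(\phi)$; then $\Gamma(X)\equiv_k\Gamma(\omega)$ gives $\Gamma(X)\models\hat\phi$, hence $\Gamma(X)\models\phi$. The same correction is needed in your ultraproduct reformulation (define the cofinite sets using $\qr(\hat\phi)$). With that one-line adjustment your argument is correct and coincides with the paper's.
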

 \begin{proof}
 	This follows  from the EF game result in Section \ref{section-EF-game}. For a given sentence $\phi$, let $\hat{\phi}$ be an unnested sentence which is logically equivalent to $\phi$. Let $\qr(\hat{\phi})=k$ and let $X$ be a finite set with $|X|\geq N_\Gamma(k)$.  By Proposition \ref{N-Gamma}, $\Gamma(X)\models \hat{\phi}$. Hence $\Gamma(X)\models \phi$. 
 \end{proof}
 
 
 \section{$\KK(\Gamma)$ is an Asymptotic Class}\label{K(Gamma) is an Asymptotic Class} 
 
  For all of this section, we fix a tree plan $\Gamma = (\Gamma,\lambda)$. By defining appropriate polynomials associated with $\Gamma$, we prove asymptotic estimates for complete 1-types and then lift asymptotic estimates to arbitrary formulas.

 
 \subsection{Polynomials Associated with Tree Plans}
 \begin{defn}
 	Let $(\Gamma,\lambda)$ be a tree plan, and let $\sigma\in\Gamma$. We define a another tree plan $(\Gamma_\sigma,\lambda_\sigma)$ as follows:
 	\begin{itemize}
 		\item $\Gamma_\sigma = \left\{\tau:\sigma\concat \tau\in\Gamma\right\}$ 
 		\item $\lambda_\sigma(\gen{}{}) = 1$ and for $\tau\in\Gamma_\sigma$ different from $\gen{}{}$, $\lambda_\sigma(\tau) = \lambda(\sigma\concat \tau)$.
 	\end{itemize}
 \end{defn}
 
 
 \begin{defn}\label{I(S,sigma)}
 	Given a tree plan $\Gamma = (\Gamma,\lambda)$ and a node $\sigma\in \Gamma$, we define 
 	$$I(\Gamma, \sigma) = \big| \left\{ \tau\in I(\Gamma) : \tau\leq \sigma \right\}  \big|$$ which is the set of all $\infty$-nodes less than or equal to  $\sigma$.
 \end{defn}
 
 
 \begin{defn}
 	For each tree plan $\Gamma = (\Gamma,\lambda)$, we define a polynomial $P(\Gamma;\,x)\in \ZZ[x]$ with  non-negative coefficients. 
 	\begin{itemize}
 		\item $P(\gen{}{};\,x) = 1$.
 		\item Let $\Gamma = (\Gamma,\lambda)$ be a tree plan, and let  $\sigma_0,\dots  , \sigma_{n-1}$ be the successors of $\gen{}{}$ in $\Gamma$; for each $i<n$, let $f_i(x) = 1$ if $\lambda(\sigma_i) = 1$ and $f_i(x) = x$ if $\lambda(\sigma_i) = \infty$. Then 
 		$P(\Gamma;\,x) = 1+\sum_{i<n}f_i(x)P(\Gamma_{\sigma_i};\,x)$
 	\end{itemize}
 	For each tree plan $\Gamma = (\Gamma,\lambda)$, we also define $\deg(\Gamma) = \deg{\big(P(\Gamma;\,x)\big)}$, and we define $A(\Gamma)$ to be the coefficient of the maximum degree monomial of $P(\Gamma;\,x)$.  So $P(\Gamma;\, x)=A(\Gamma)x^{\deg(\Gamma)}+\dots$.
 \end{defn}
 
 \begin{lemma}
 	Let $\Gamma = (\Gamma,\lambda)$ be a tree plan. Then, for every finite set $X$, $|\Gamma(X)| = P(\Gamma;\,|X|)$.
 \end{lemma}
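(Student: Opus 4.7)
The plan is to prove the identity by induction on the tree plan $\Gamma$, using the recursive structure that appears in the definition of $P(\Gamma;\,x)$. The base case is $\Gamma = \{\gen{}{}\}$, where $\Gamma(X) = \{\gen{}{}\}$ has size $1$ and $P(\gen{}{};\,|X|) = 1$, so the identity holds trivially.

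For the inductive step, I would let $\sigma_0,\dots,\sigma_{n-1}$ enumerate the successors of $\gen{}{}$ in $\Gamma$ and partition $\Gamma(X)$ as
\[
\Gamma(X) \;=\; \{\gen{}{}\} \;\sqcup\; \bigsqcup_{i<n} U_i,
\]
where $U_i$ is the set of nodes $a \in \Gamma(X)$ whose path from the root passes through some node lying above $\sigma_i$ (i.e. $\pi_X(\pred^{h-1}(a)) = \sigma_i$ for $h = \h(a) \geq 1$, taking the $i$ for which the height-$1$ ancestor of $a$ is mapped to $\sigma_i$ by $\pi_X$). For each $i < n$, the height-$1$ ancestors of the elements of $U_i$ are exactly the elements $a \in \Gamma(X)$ with $\pred(a) = \gen{}{}$ and $\pi_X(a) = \sigma_i$, and by the definition of $\Gamma(X)$, the number of such $a$ is $1$ if $\lambda(\sigma_i) = 1$ and $|X|$ if $\lambda(\sigma_i) = \infty$ --- that is, exactly $f_i(|X|)$.

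The key observation is that for each such $a$, the subtree $\{b \in \Gamma(X) : b \geq a\}$ is, as an $\LL_\Gamma$-structure after relabeling, canonically isomorphic to $\Gamma_{\sigma_i}(X)$: the projection $\pi_X$ sends it bijectively onto $\{\tau \in \Gamma : \tau \geq \sigma_i\}$, which is in natural bijection with $\Gamma_{\sigma_i}$, and the multiplicity function $\lambda_{\sigma_i}$ is by construction the restriction of $\lambda$. Therefore $|U_i| = f_i(|X|) \cdot |\Gamma_{\sigma_i}(X)|$, and applying the induction hypothesis to each $\Gamma_{\sigma_i}$ (which has strictly smaller height) gives $|\Gamma_{\sigma_i}(X)| = P(\Gamma_{\sigma_i};\,|X|)$. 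Summing,
\[
|\Gamma(X)| \;=\; 1 + \sum_{i<n} f_i(|X|)\cdot P(\Gamma_{\sigma_i};\,|X|) \;=\; P(\Gamma;\,|X|),
\]
where the last equality is the recursive definition of $P(\Gamma;\,x)$ evaluated at $|X|$.

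The only real care needed is in setting up the partition and checking the isomorphism between the subtree above a height-$1$ node and $\Gamma_{\sigma_i}(X)$; this is bookkeeping rather than a genuine obstacle, since it follows directly from unwinding the definitions of $\Gamma(X)$ and $\Gamma_{\sigma_i}$. Once that is in place, the induction closes without any further subtlety.
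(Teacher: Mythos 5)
Your proof is correct and follows essentially the same approach as the paper: induction on the height (equivalently, on the recursive structure) of the tree plan, decomposing $\Gamma(X)$ at the root into the root itself together with the subtrees above each level-one node, and then invoking the recursive definition of $P(\Gamma;\,x)$. You spell out the partition and the isomorphism of the subtree above a level-one node with $\Gamma_{\sigma_i}(X)$ a bit more explicitly than the paper does, but the argument is the same.
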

 
 \begin{proof}
 	By induction on the height of $\Gamma$, $\h(\Gamma)$, we show that $|\Gamma(X)| = P(\Gamma;\,|X|)$. Let $\Gamma$ be a tree plan, and  let  $\sigma_0, \dots, \sigma_{n-1}$ be an enumeration of the successors of $\gen{}{}$ in $\Gamma$. Let $X$ be a finite set. 
 	\begin{itemize}
 		\item If $\h(\Gamma)=0$, then $\Gamma=\gen{}{}$. So $\Gamma(X)\cong \gen{}{}$. Thus $|\Gamma(X)|=1=P(\gen{}{};x)$.
 		\item Suppose $\h(\Gamma)=k+1$ and  for every tree plan $\Gamma'$ with $\h(\Gamma')=k$ we have $|\Gamma'(X)|=P(\Gamma';|X|)$.
 		\item To build $\Gamma(X)$, for each $i<n$ we have one of the following  cases:
 		\begin{itemize}
 			\item if $\lambda(\sigma_i)=1$, then by putting $\Gamma_{\sigma_i}(X)$ as the successor of the  root we add $\vert\Gamma_{\sigma_i}(X)\vert-1$ many new nodes.
 			\item if $\lambda(\sigma_i)=\infty$, then  we actually add $|X|$-many copies of $\Gamma_{\sigma_i}(X)$ to the root.
 		\end{itemize}
 		So $$|\Gamma(X)|=1+\sum_{i<n} f_i(|X|)P(\Gamma_{\sigma_i};|X|)=P(\Gamma;|X|). $$ 
 	\end{itemize}
 \end{proof}
 
 \begin{defn}
 	We define a family of polynomials $Q(\Gamma,\sigma;\,x)\in\ZZ[x]$ with non-negative coefficients, where $\Gamma =(\Gamma,\lambda)$ is a tree plan and $\sigma\in\Gamma$.
 	\begin{itemize}
 		\item $Q(\Gamma,\gen{}{};\,x) = 1$.
 		\item Let $\Gamma = (\Gamma,\lambda)$ be a tree plan, and let $\sigma\concat i\in\Gamma$. 
 		\noindent Then: 
 		\begin{itemize}
 			\item if $\lambda(\sigma\concat i) = 1$, then
 			$Q(\Gamma,\sigma\concat i;\,x ) =  Q(\Gamma,\sigma;\,x)$;
 			\item if $\lambda(\sigma\concat i) = \infty$, then
 			$Q(\Gamma,\sigma\concat i;\,x ) =  x\cdot Q(\Gamma,\sigma;\,x)$.
 		\end{itemize} 
 	\end{itemize}
 	So $Q(\Gamma, \sigma;\, x)= x^{I(\Gamma, \sigma)}$ where $I(\Gamma, \sigma)$ is defined in Definition \ref{I(S,sigma)}.
 \end{defn}

 
 \begin{lemma}\label{Q(..,|X|)}
 	Let $\Gamma = (\Gamma,\lambda)$ be a tree plan, and let $\sigma\in\Gamma$. Then, for every finite set $X$, 
 	$$Q(\Gamma,\sigma;\,|X|) = \Big|\Big\{a\in\Gamma(X):\pi(a) = \sigma\Big\}\Big|.$$
 \end{lemma}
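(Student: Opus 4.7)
The plan is to prove this by induction on the height $\h(\sigma)$ of $\sigma$ within $\Gamma$, mirroring the recursive definition of $Q(\Gamma,\sigma;\,x)$.

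For the base case, $\sigma = \gen{}{}$. By the construction of $\Gamma(X)$, the only element whose image under $\pi_X$ is $\gen{}{}$ is $\gen{}{}\in\Gamma(X)$ itself. Hence the cardinality on the right-hand side is $1$, which matches $Q(\Gamma,\gen{}{};\,|X|)=1$.

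For the inductive step, write $\sigma = \tau \concat i$, and assume the claim for $\tau$. I would first observe that every $a \in \Gamma(X)$ with $\pi_X(a) = \sigma$ has a well-defined predecessor $\pred(a) \in \Gamma(X)$ with $\pi_X(\pred(a)) = \tau$, and conversely every $b \in \Gamma(X)$ with $\pi_X(b)=\tau$ gives rise to some successors $a$ projecting to $\sigma$. The key step is to count, for each fixed such $b$, how many $a \in \Gamma(X)$ satisfy $\pred(a) = b$ and $\pi_X(a) = \sigma$. Unfolding the definition of $\Gamma(X)$, the element $a$ is uniquely determined by a label $t \in X \cup \{\filledstar\}$ attached to the last coordinate, subject to the constraint that $t = \filledstar$ when $\lambda(\sigma) = 1$ and $t \in X$ when $\lambda(\sigma) = \infty$. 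Hence the number of such $a$ above a fixed $b$ is $1$ in the first case and $|X|$ in the second, exactly matching the recursive clause for $Q$.

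Combining these observations with the inductive hypothesis applied to $\tau$ gives
\[
\Big|\big\{a\in\Gamma(X):\pi_X(a)=\sigma\big\}\Big| = f_\sigma(|X|)\cdot Q(\Gamma,\tau;\,|X|) = Q(\Gamma,\sigma;\,|X|),
\]
where $f_\sigma(x)=1$ if $\lambda(\sigma)=1$ and $f_\sigma(x)=x$ if $\lambda(\sigma)=\infty$. The only point requiring mild care is confirming that the map $a \mapsto (\pred(a), \text{last label})$ really is a bijection between the fiber $\pi_X^{-1}(\sigma)$ and the disjoint union, over $b\in\pi_X^{-1}(\tau)$, of the admissible label sets; this is immediate from the definition of $\Gamma(X)$ as a subtree of $(\omega\times(X\cup\{\filledstar\}))^{<\omega}$, so no real obstacle arises.
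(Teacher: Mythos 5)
Your proof is correct and uses essentially the same induction (on the length/height of $\sigma$, via the recursion $\sigma = \tau\concat i$) that the paper uses, with the same case split on $\lambda(\sigma)$ and the same fiber-counting argument. If anything, you spell out the bijection $a\mapsto(\pred(a),\text{last label})$ slightly more explicitly than the paper does, but the argument is the same.
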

 
 \begin{proof}
 	We do induction on the length of $\sigma$. 
 	\begin{itemize}
 		\item If $\sigma=\gen{}{}$, then $Q(\Gamma, \gen{}{};x)=1$, and $\left\{a\in \Gamma(X): \pi(a)=\gen{}{}\right\}=\left\{\gen{}{}\right\}$. Thus $$Q(\Gamma, \gen{}{};|x|)=1=\left|\left\{ a\in \Gamma(X) : \pi(a)=\gen{}{} \right\}\right|.$$
 		\item Let $\sigma=\tau\concat i$,  and assume that for each $\gamma\in \Gamma$ with $\gamma \subset \sigma$, 
 		$$Q(\Gamma, \gamma;\, |X|)= |\left\{ a\in \Gamma(X) : \pi(a)=\gamma \right\}|.$$ 
 		We have two cases:
 		\begin{itemize}
 			\item If $\lambda(\sigma)=\lambda(\tau\concat i)=1$, then 
 			\begin{align*}
 			Q(\Gamma, \sigma; |X|) & =  Q(\Gamma, \tau; |X|)\\
 			& = |\left\{ a\in \Gamma(X) : \pi(a)= \tau \right\}| \\
 			& =|\left\{ a\in \Gamma(X) : \pi(a)= \sigma \right\}|.
 			\end{align*}
 			
 			\item If $\lambda(\sigma)=\infty$, then  
 			\begin{align*}
 			Q(\Gamma, \sigma;\,|X|) & = |X|\cdot Q(\Gamma, \tau;\, |X|) \\ & = |X|\cdot |\left\{ a\in \Gamma(X) : \pi(a)
 			=\tau \right\}| \\
 			& =|\left\{ a\in \Gamma(X) : \pi(a)
 			=\sigma \right\}|.
 			\end{align*}
 			
 			The last equality holds because for each member of $\left\{ a\in \Gamma(X) : \pi(a)=\tau \right\}$ we have $|X|$ many choices $a\in \Gamma(X)$ such that  $\pi(a)=\sigma$.
 		\end{itemize}
 	\end{itemize}
 \end{proof}
 
 \begin{defn} 
 	We define yet another family of polynomials  $Q(\Gamma,\sigma,\sigma';\,x)\in\ZZ[x]$ with non-negative coefficients, where $\Gamma =(\Gamma,\lambda)$ is a tree plan and $\sigma\subseteq\sigma'\in\Gamma$. Given such $\Gamma,\sigma,\sigma'$, if $\sigma' = \sigma\concat \tau$, then 
 	$$Q(\Gamma,\sigma,\sigma';\,x) := Q(\Gamma_\sigma,\tau;\,x).$$
 	In terms of $Q(\Gamma,\sigma,\sigma';\,x)$, we then define   
 	\begin{itemize}
 		
 		\item $\deg_\Gamma(\sigma'/\sigma) = \deg{\big(\,Q(\Gamma,\sigma,\sigma';\,x)\,\big)}.$  
 		
 		\item $\deg_\Gamma(\sigma)= \deg_\Gamma(\sigma/\gen{}{}) =\deg\big(Q(\Gamma, \sigma;\, x) \big)=I(\Gamma, \sigma).$

 		\item $\delta_\Gamma(\sigma'/\sigma) = \frac{\,\deg_\Gamma(\sigma'/\sigma)\,}{\deg(\Gamma)}$.\\  It's worth noting that $\deg(\Gamma) = \max \{\deg_\Gamma(\sigma): {\sigma \in \Gamma}\}$ and $\deg_\Gamma(\sigma'/\sigma)=\deg_\Gamma(\sigma')-\deg_\Gamma(\sigma)$. 
 		
 		\item If $\delta_\Gamma(\sigma'/\sigma) =  \deg_\Gamma(\sigma'/\sigma) = 0$, then $Q(\Gamma,\sigma,\sigma';\,x)$ is constant one and we set  $\mu_\Gamma(\sigma'/\sigma) = Q(\Gamma,\sigma,\sigma';\,x)=1$. Otherwise, we set
 		$$\mu_\Gamma(\sigma'/\sigma) = \lim_{x\to\infty}\frac{Q(\Gamma, \sigma, \sigma' ;\,x)}{\,\,P(\Gamma;\,x)^{\delta_\Gamma(\sigma'/\sigma)}\,\,} = \frac{1}{\,A(\Gamma)^{\delta_\Gamma(\sigma'/\sigma)}\,}.$$
 		where $A(\Gamma)$ is the coefficient of the maximum degree monomial $P(\Gamma;\,x)$.  Note that $A(\Gamma)$ can also be described as the number of nodes in $\Gamma$ of maximal degree.
 	\end{itemize}
 	We will usually omit the subscript $\Gamma$ from the notation, as the tree plan $(\Gamma,\lambda)$ will be clear from context.
 \end{defn}
 
 \begin{lemma}
 	Let $\Gamma = (\Gamma,\lambda)$ be a tree plan, and let $\sigma\subseteq\sigma'\in\Gamma$. Then, for every finite set $X$ and for every $b\in \Gamma(X)$ such that $\pi_X(b) = \sigma$, 
 	$$Q(\Gamma,\sigma,\sigma';\,|X|) = \big|\Gamma(X; b,\sigma')\big|,$$
 	where
 	$$\Gamma(X; b,\sigma') = \left\{a\in\Gamma(X)\Bvert \pi_X(a) = \sigma'\textit{ and }b<a\right\}.$$
 \end{lemma}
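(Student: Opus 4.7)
The plan is to reduce the statement to Lemma \ref{Q(..,|X|)} applied to the ``quotient'' tree plan $\Gamma_\sigma$. Since $\sigma \subseteq \sigma'$, I would write $\sigma' = \sigma\concat\tau$ for a unique $\tau \in \Gamma_\sigma$, so that by definition $Q(\Gamma,\sigma,\sigma';\,x) = Q(\Gamma_\sigma,\tau;\,x)$ and the right-hand side of the claim becomes $Q(\Gamma_\sigma,\tau;\,|X|)$.

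The central step is to exhibit a bijection between the cone $\{a \in \Gamma(X) : b \le a\}$ above $b$ and the entire tree $\Gamma_\sigma(X)$. Any element of this cone factors uniquely as a concatenation $a = b\concat c$ for some tuple $c$. Unpacking the explicit description of $\Gamma(X)$ as a subset of $(\omega\times(X\cup\{\filledstar\}))^{<\omega}$, together with the definitions $\Gamma_\sigma = \{\gamma : \sigma\concat\gamma \in \Gamma\}$ and $\lambda_\sigma(\gen{}{}) = 1$, $\lambda_\sigma(\gamma) = \lambda(\sigma\concat\gamma)$ for $\gamma \ne \gen{}{}$, one checks that the assignment $a \mapsto c$ is a bijection onto $\Gamma_\sigma(X)$: the constraint $b\concat c \in \Gamma(X)$ translates exactly into the constraint $c \in \Gamma_\sigma(X)$, because $\lambda_\sigma$ was designed to agree with $\lambda$ above $\sigma$. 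A routine verification then shows that under this bijection $\pi_X(a) = \sigma\concat\pi_X^{\Gamma_\sigma}(c)$, where $\pi_X^{\Gamma_\sigma}$ denotes the projection for the tree plan $\Gamma_\sigma$.

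Restricting this bijection to elements with $\pi_X(a) = \sigma\concat\tau$ gives a bijection between $\Gamma(X;b,\sigma')$ and $\{c \in \Gamma_\sigma(X) : \pi_X^{\Gamma_\sigma}(c) = \tau\}$. By Lemma \ref{Q(..,|X|)} applied to the tree plan $\Gamma_\sigma$ at the node $\tau$, the latter set has cardinality $Q(\Gamma_\sigma,\tau;\,|X|) = Q(\Gamma,\sigma,\sigma';\,|X|)$, which yields the desired equality.

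The only real obstacle is the bookkeeping required to identify the cone above $b$ with $\Gamma_\sigma(X)$ as $\LL_\Gamma$-structures (respecting both $\pi$ and the $\lambda$-labelling); once that identification is in hand, the statement is an immediate corollary of the preceding lemma and contains no new conceptual ingredient. I would expect the write-up to be essentially a careful unwinding of the tuple-notation for $\Gamma(X)$ and $\Gamma_\sigma(X)$.
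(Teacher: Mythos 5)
Your proposal matches the paper's proof: both write $\sigma' = \sigma\concat\tau$, invoke $Q(\Gamma,\sigma,\sigma';x) = Q(\Gamma_\sigma,\tau;x)$ together with the preceding lemma, and establish the bijection $a \mapsto b\concat a$ between $\{a \in \Gamma_\sigma(X) : \pi_X(a) = \tau\}$ and $\Gamma(X;b,\sigma')$. The reasoning and the key bijection are essentially identical to the paper's.
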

 \begin{proof}
 	Let $\sigma'=\sigma\concat\tau$. Then by Lemma \ref{Q(..,|X|)}
 	we have 	$$Q(\Gamma, \sigma, \sigma';\, |X|)=Q(\Gamma_\sigma,\tau;\,|X|)=|\left\{ a\in \Gamma_\sigma(X) : \pi_X(a)=\tau \right\}|.$$
 	\begin{claim}  There is a bijection between 
 		$\left\{ a\in \Gamma_\sigma(X) : \pi_X(a)=\tau \right\} $ and\\ $ \left\{ a\in \Gamma(X) : \pi_X(a)=\sigma'  \wedge b<a \right\}$.
 	\end{claim}
 	\begin{proof} 
 		The map $a\mapsto b\concat a$ defines a bijection between them.  Let $a$ be a member of the left hand set. Then $b< b\concat a $ and $\pi_X(b\concat a)=\sigma\concat\tau =\sigma'$. So $b\concat a$ belongs to the right hand set. If $a, a'$ are distinct members of  $\Gamma_\sigma(X)$, then $b\concat a$ and $b\concat a'$ are also distinct. Also, for  an arbitrary member $c$ of the right hand set,  since $b<c$ and $\pi_X(b)=\sigma$,  it can be written as $b\concat a$ for some member $a\in \Gamma_\sigma(X)$. Since $\pi_X(c)= \pi_X(b\concat a)=\sigma'=\sigma\concat\tau$  and $\pi_X(b)=\sigma$, $\pi_X(a)=\tau$. 
 	\end{proof}
 	This completes the proof.   
 \end{proof}
 
 
 \begin{prop}\label{mu}
 	Let $\Gamma = (\Gamma,\lambda)$ be a tree plan, and let $\sigma\subseteq\sigma'\in\Gamma$ with $\deg(\sigma'/\sigma)>0$. Let $X_0$ be a finite set, and let $b\in \Gamma(X_0)$ such that $\pi_{X_0}(b) = \sigma$. Then 
 	$$\lim_{|X|\to\infty}\frac{\,\big|\Gamma(X;b,\sigma')\big|\,}{\big|\Gamma(X)\big|^{\delta_\Gamma(\sigma'/\sigma)}} = \mu_\Gamma(\sigma'/\sigma)$$
 	where $X$ ranges over finite sets containing $X_0$.  
 \end{prop}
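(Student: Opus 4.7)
The plan is to reduce the stated limit to a direct computation on the closed-form polynomials $P$ and $Q$ that have already been established. First, by the lemma immediately preceding the proposition, $|\Gamma(X;b,\sigma')| = Q(\Gamma,\sigma,\sigma';|X|)$, and by the earlier lemma $|\Gamma(X)| = P(\Gamma;|X|)$. This turns the geometric-looking ratio into a purely algebraic expression in the single variable $n := |X|$; the hypothesis $X \supseteq X_0$ is used only to ensure that $b$ still lives inside $\Gamma(X)$ so that $\Gamma(X;b,\sigma')$ continues to make sense, and plays no further role in the limit.

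Second, I would exploit the explicit shape of the polynomials. Writing $\sigma' = \sigma\concat \tau$, the recursive definition of $Q$ and a quick induction on $\tau$ give $Q(\Gamma,\sigma,\sigma';x) = Q(\Gamma_\sigma,\tau;x) = x^{I(\Gamma_\sigma,\tau)}$, which is a monomial with leading coefficient $1$ and degree $I(\Gamma_\sigma,\tau) = \deg_\Gamma(\sigma'/\sigma)$. Meanwhile, by definition of $A(\Gamma)$ and $\deg(\Gamma)$, we have $P(\Gamma;x) = A(\Gamma)\,x^{\deg(\Gamma)} + (\text{lower order terms})$. Substituting, the ratio of interest becomes
\[
\frac{n^{\deg_\Gamma(\sigma'/\sigma)}}{P(\Gamma;n)^{\delta_\Gamma(\sigma'/\sigma)}},
\]
and by continuity of $t\mapsto t^{\delta}$ on the positive reals, together with $P(\Gamma;n)/n^{\deg(\Gamma)} \to A(\Gamma)$, one gets $P(\Gamma;n)^{\delta} \sim A(\Gamma)^{\delta}\, n^{\deg(\Gamma)\cdot \delta} = A(\Gamma)^{\delta}\, n^{\deg_\Gamma(\sigma'/\sigma)}$. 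The powers of $n$ cancel and the limit collapses to $1/A(\Gamma)^{\delta_\Gamma(\sigma'/\sigma)}$, which is exactly $\mu_\Gamma(\sigma'/\sigma)$ by definition.

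There is no real obstacle; the only mild subtlety worth flagging is that $\delta_\Gamma(\sigma'/\sigma)$ is rational but in general not an integer, so $P(\Gamma;n)^{\delta}$ is not literally a polynomial. This is handled by continuity of the fractional-power map on $\mathbb{R}_{>0}$, which is why the limit works out cleanly rather than reducing to a polynomial long division. The hypothesis $\deg_\Gamma(\sigma'/\sigma) > 0$ excludes the degenerate case where $Q$ is the constant $1$; in that case $\mu_\Gamma(\sigma'/\sigma) = 1$ by convention and the ratio is constantly $1$ anyway, so the proposition holds trivially there.
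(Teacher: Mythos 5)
Your proof is correct and takes essentially the same route as the paper: both first replace $|\Gamma(X;b,\sigma')|$ and $|\Gamma(X)|$ by $Q(\Gamma,\sigma,\sigma';|X|)$ and $P(\Gamma;|X|)$ via the two preceding lemmas, and then invoke the definition of $\mu_\Gamma(\sigma'/\sigma)$. The only difference is that the paper's proof stops at the definitional identity $\mu_\Gamma(\sigma'/\sigma)=\lim_{x\to\infty}Q(\Gamma,\sigma,\sigma';x)/P(\Gamma;x)^{\delta}$, whereas you additionally verify (correctly, using $Q=x^{\deg_\Gamma(\sigma'/\sigma)}$, $P\sim A(\Gamma)x^{\deg\Gamma}$, and continuity of $t\mapsto t^{\delta}$) that this limit equals $1/A(\Gamma)^{\delta}$ --- a claim the paper builds into the definition itself.
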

 \begin{proof}
 	By definition of $\mu_\Gamma(\sigma'/\sigma)$ we have
 	\begin{align*}
 	\mu_\Gamma(\sigma'/\sigma)  &=  \lim_{x\to\infty}\frac{Q(\Gamma,\sigma, \sigma';\, x)}{P(\Gamma;\, x)^{\delta_\Gamma(\sigma'/\sigma)}}\\ &= \lim_{|X|\to\infty}\frac{Q(\Gamma, \sigma, \sigma';\, |X|)}{P(\Gamma;\, |X|)^{\delta_\Gamma(\sigma'/\sigma)}}\\ &= \lim_{|X|\to\infty}\frac{|\Gamma(X;\, b, \sigma')|}{|\Gamma(X)|^{\delta_\Gamma(\sigma'/\sigma)}},
 	\end{align*}
 	where $X$ ranges over finite sets containing $X_0$.  
 \end{proof} 
 
 
 \subsection{Asymptotic Estimates for Complete 1-types}
 
 It will be convenient to define, for $B\subset_\fin \Gamma(\omega)$ and $e\in\tcl(B)$, 
 $$\mult(e/B) = \big|\big\{g(e):g\in Aut(\Gamma(\omega)/B)\big\}\big|,$$
 the \textit{multiplicity} of $e$ over $B$ which is always 1. In fact, any automorphism fixing $B$ fixes all of $\tcl(B)$.

 \begin{prop}\label{2.3.2.1} 
 	Let $\bb$ be an enumeration of $B\subset_\fin\Gamma(\omega)$, and let $a\in\Gamma(\omega)$. Let $\theta(x,\bb)$ be a principal formula of $\tp_\Gamma(a/B)$. 
 	\begin{enumerate}
 		
 		\item Suppose $a\in\tcl(B)$. Let $b$ be the maximum element of ${\downarrow}B$ such that $b\leq a$, and let $\sigma = \pi(b)$ and $\sigma' = \pi(a)$. Then 
 		$$\big|\theta(\Gamma(X),\bb)\big| = \mult(a/B) = \mu(\sigma'/\sigma)=1$$
 		for all sufficiently large sets $X\subset_\fin S$ such that $B\subseteq\Gamma(X)$.
 		
 		\item Suppose $a\notin\tcl(B)$. Let $e = [a\wedge B]$, $\sigma= \pi(e)$, and $\sigma' = \pi(a)$. Then 
 		$$\lim_{|X|\to\infty}\frac{\,\big|\theta(\Gamma(X),\bb)\big|\,}{\,\,|\Gamma(X)|^{\delta(\sigma'/\sigma)}\,\,} = \mult(e/B){\cdot}\mu(\sigma'/\sigma)=\mu(\sigma'/\sigma)$$
 		where $X$ ranges over finite subsets of $\omega$ such that  $B\subseteq\Gamma(X)$.
 	\end{enumerate}
 \end{prop}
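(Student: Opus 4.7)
The plan is to dispose of Part (1) directly from the multiplicity remark, and for Part (2) to combine quantifier elimination for $Th(\Gamma)$ with the counting lemma for $Q(\Gamma,\sigma,\sigma';\,|X|)$ and an error bound coming from the finitely many ``bad'' branching nodes above $e$.

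For Part (1), since $a\in\tcl(B)$ the remark preceding the proposition gives $\mult(a/B)=1$, so the principal formula $\theta(x,\bb)$ has $a$ as its unique realisation in $\Gamma(\omega)$; because $\tcl(B)\subseteq\Gamma(X)$ whenever $B\subseteq\Gamma(X)$, this realisation already lies in $\Gamma(X)$, giving $|\theta(\Gamma(X),\bb)|=1$. For the companion identity $\mu(\sigma'/\sigma)=1$, note that each node strictly between $\sigma=\pi(b)$ and $\sigma'=\pi(a)$ on the $\Gamma$-path belongs to $\tcl(B)\setminus{\downarrow}B$, hence was added by the closure rule, hence has $\lambda$-value $1$; consequently $I(\Gamma_\sigma,\tau)=0$, $Q(\Gamma,\sigma,\sigma';\,x)\equiv 1$, $\deg(\sigma'/\sigma)=0$, and $\mu(\sigma'/\sigma)=1$ by its defining case.

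For Part (2), the first step is to invoke quantifier elimination (the corollary just stated) to replace $\tp(a/B)$ by $\qftp(a/B)$. A direct tree computation shows $a\sqcap b = e\sqcap b$ for every $b\in B$: on the one hand $a\sqcap b\in\tcl(B)\cap\{x\leq a\}$, so $a\sqcap b\leq e$, and on the other hand $e\leq a$ gives $e\sqcap b\leq a\sqcap b$. Hence $\qftp(a/B)$ is determined by the pair $(\sigma',e)$, and for $a'\in\Gamma(\omega)$ with $\pi(a')=\sigma'$ and $a'>e$ one has $a'\models\qftp(a/B)$ iff $[a'\wedge B]=e$. Together with Proposition~\ref{N-Gamma} (so that this identification transfers from $\Gamma(\omega)$ to $\Gamma(X)$ for $|X|$ large enough relative to $\qr(\theta)$) this gives
\begin{equation*}
\theta(\Gamma(X),\bb)=\bigl\{a'\in\Gamma(X):\pi(a')=\sigma',\ e<a',\ [a'\wedge B;X]^\Gamma=e\bigr\}.
\end{equation*}

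For the count itself, the preceding lemma gives $|\Gamma(X;e,\sigma')|=Q(\Gamma,\sigma,\sigma';\,|X|)$, and the elements of $\Gamma(X;e,\sigma')\setminus\theta(\Gamma(X),\bb)$ are exactly those $a'$ passing through some $f\in\tcl(B)$ with $e<f$ and $\pi(f)\subseteq\sigma'$. Since $a\notin\tcl(B)$, the immediate successor of $e$ below $a$ is outside $\tcl(B)$, which by the closure rule forces the unique successor $\sigma^+$ of $\sigma$ on the $\Gamma$-path to $\sigma'$ to satisfy $\lambda(\sigma^+)=\infty$. Every offending $f$ therefore has $\pi(f)\supseteq\sigma^+$, so $\deg(\sigma'/\pi(f))\leq\deg(\sigma'/\sigma)-1$; summing $Q(\Gamma,\pi(f),\sigma';\,|X|)$ over the finitely many such $f$ (bounded in cardinality by $|\tcl(B)|$) yields the error $O\!\left(|X|^{\deg(\sigma'/\sigma)-1}\right)$. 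Dividing by $|\Gamma(X)|^{\delta(\sigma'/\sigma)}$, the leading quotient tends to $\mu(\sigma'/\sigma)$ by Proposition~\ref{mu}, the error quotient is $O(|X|^{-1})\to 0$, and $\mult(e/B)=1$ by the multiplicity remark, which together give the desired limit. The main obstacle is the bookkeeping in this error term: one must be sure that the single hypothesis $a\notin\tcl(B)$ already forces a full unit of degree drop at every potentially bad intermediate node $f$, which is precisely the content of the forced $\infty$-label on $\sigma^+$.
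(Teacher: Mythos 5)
Your proof is correct, and for Part (2) it is in fact \emph{more} careful than the paper's own argument.  For Part (1) you and the paper reach the same conclusion by essentially the same observations ($\mult(a/B)=1$ always, and the path from $b$ to $a$ carries only $\lambda=1$ labels, forcing $Q(\Gamma,\sigma,\sigma';x)\equiv 1$), although the paper proceeds by splitting $\tcl(B)$ into the cases $a\in B$, $a\in{\downarrow}B\setminus B$, and $a\in\tcl(B)\setminus{\downarrow}B$, while you argue uniformly; both work.

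For Part (2) the paper simply reduces to the claim that $\lvert\theta(\Gamma(X),\bb)\rvert=\lvert\Gamma(X;e,\sigma')\rvert$ for all sufficiently large $X$ and stops there, offering no justification.  That exact equality is actually false in general: elements of $B$ can sit above $e$ on other branches toward $\sigma'$, and the elements $a'\in\Gamma(X;e,\sigma')$ that pass through such points of $\tcl(B)$ do \emph{not} realize $\tp_\Gamma(a/B)$, so $\theta(\Gamma(X),\bb)$ is a proper subset of $\Gamma(X;e,\sigma')$.  (For instance, take $\Gamma$ a chain of length $2$ with both non-root nodes labelled $\infty$, and $B$ a single leaf $c$ not above $\pred(a)$; then $e=\epsilon$, $\lvert\Gamma(X;e,\sigma')\rvert=\lvert X\rvert^2$ but $\lvert\theta(\Gamma(X),c)\rvert=\lvert X\rvert^2-\lvert X\rvert$.)  You correctly identify this discrepancy, and the degree-drop argument you give to control it is exactly what is needed: since $a\notin\tcl(B)$, the unique child $\sigma^+$ of $\sigma$ toward $\sigma'$ must carry $\lambda(\sigma^+)=\infty$, so every branch point $f\in\tcl(B)$ strictly above $e$ has $\pi(f)\supseteq\sigma^+$ and hence $\deg(\sigma'/\pi(f))\leq\deg(\sigma'/\sigma)-1$; summing over the finitely many (at most $\lvert\tcl(B)\rvert$) such $f$ gives an $O\bigl(\lvert X\rvert^{\deg(\sigma'/\sigma)-1}\bigr)$ error that vanishes in the limit.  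So your proof does not merely match the paper's: it supplies the missing (and necessary) bound that the paper's stated reduction does not actually establish.  One very small polish: in Part (1) you should say explicitly that all nodes strictly above $\sigma$ \emph{up to and including} $\sigma'$ carry $\lambda=1$ (when $a\in\tcl(B)\setminus{\downarrow}B$, the node $\pi(a)=\sigma'$ itself is added by the closure rule, and when $a\in{\downarrow}B$ one has $\sigma=\sigma'$), since $I(\Gamma,\sigma')-I(\Gamma,\sigma)$ counts $\sigma'$ as well.
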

 
 \begin{proof}  
 	\begin{enumerate}
 		\item Let $a\in \tcl(B)$, and let $b$ be the maximum element of ${\downarrow}B$ such that $b\leq a$, and let $\sigma = \pi(b)$ and $\sigma' = \pi(a)$. We consider the following cases.
 		\begin{itemize}
 			
 			\item[case 1.1]
 			If $a\in B$, then $\theta(x,\bb)=(x=a)$ isolates $\tp_\Gamma(a/B)$. So $\theta(\Gamma(X), \bb)=\{a\}$. Then $|\theta(\Gamma(X), \bb)|=1$. Since $a\in B$, $g(a)=a$ for each $g\in Aut(\Gamma(\omega)/B)$. This implies $\mult(a/B)=1$. Also since $a\in B$, $b=a$ and $\sigma=\sigma'$. So we have
 			$$ Q(\Gamma, \sigma, \sigma';\, x)=Q(\Gamma_\sigma, \gen{}{};\, x)=1.$$
 			Hence $\mu(\sigma'/\sigma)=Q(\Gamma, \sigma, \sigma';\, x)=1$. 
 			
 			\item[case 1.2]
 			If $a\in{\downarrow}B\setminus B$, then there is   $c\in B$ such that $\pred^k(c)=a$ for some $k$. So $\theta(x,\bb)=(\pred^k(c)=a)$ is the formula that isolates $\tp_\Gamma(a/B)$. So $\theta(\Gamma(X), \bb)=\{a\}$. Then $|\theta(\Gamma(X), \bb)|=1$.  Since every automorphism $g\in Aut(\Gamma(\omega)/B)$ fixes ${\downarrow}B$ as well,   $\mult(a/B)=1$.  Also since $a\in {\downarrow}B\setminus  B$, $b=a$ and $\sigma=\sigma'$. So 
 			similar to the previous case we have $\mu(\sigma'/\sigma)=Q(\Gamma, \sigma, \sigma';\, x)=1$.

 			\item[case 1.3]
 			If $a\in \tcl(B)\setminus {\downarrow}B$, then $a\notin I(S)$. Let $c_1, \dots, c_t\in \Gamma(\omega)$ be an enumeration of the successors of $\pred(a)$. There is $k\in \NN^+$ such that $\pred^k(a)=b$ (note that in this case $b\in B$). Let $\phi(x)$ be the formula which describes the sub-tree rooted at $a$ in $\Gamma(X)$ for some sufficiently large set $X\subset_\fin S$ such that  $B\subseteq\Gamma(X)$. Then $\theta(x,b)=$
 			\begin{align*}
 			&	\pred^k(x) = b \wedge P_\sigma(x)
 			\end{align*}
 			isolates $\tp(a/B)$. Since there is no infinity point in the path from $b$ to $a$, $Q(\Gamma, \sigma, \sigma';\, x)=\mult(a/B)$. Also since $Q(\Gamma, \sigma, \sigma':\, x)$ is constant, $\mu(\sigma'/\sigma)=Q(\Gamma, \sigma, \sigma';\, x)$.  
 			
 		\end{itemize}
 		\item Let $a\notin\tcl(B)$. Let $e = [a\wedge B]$, $\sigma= \pi(e)$, and $\sigma' = \pi(a)$. By Lemma \ref{mu} we have
 		$$\lim_{|X|\to\infty}\frac{\,\big|\Gamma(X;b,\sigma')\big|\,}{\big|\Gamma(X)\big|^{\delta_\Gamma(\sigma'/\sigma)}} = \mu_\Gamma(\sigma'/\sigma).$$
 		So it is enough to show that for every sufficiently large finite set $X$,
 		$$  |\theta(\Gamma(X), \bb)|=\mult(e/B)\cdot |\Gamma(X; e, \sigma')|= |\Gamma(X; e, \sigma')|.$$  	\end{enumerate}
 \end{proof}

 
 \subsection{Lifting Asymptotic Estimates to Arbitrary Formulas}
 
 \begin{thm}\label{deg(Gamma)-dim-asymptotic-class}
 	$\KK(\Gamma)$ is a $\deg(\Gamma)$-dimensional asymptotic class, and it is not an $n$-dimensional asymptotic class for any $n<\deg(\Gamma)$.
 \end{thm}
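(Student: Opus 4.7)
My plan is to reduce asymptotic estimates for arbitrary formulas to the complete 1-type case already handled in Proposition \ref{2.3.2.1}, via quantifier elimination, and then to exhibit a single formula with dimension $\tfrac{1}{\deg(\Gamma)}$, ruling out any $n$-dimensional asymptotic class structure for $n<\deg(\Gamma)$.

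For the upper bound, I would fix $\phi(x,\yy)$ with $|\yy|=n$. By $\aleph_0$-categoricity, $T=Th(\Gamma)$ has only finitely many complete $n$-types $\bar p_0,\dots,\bar p_{m-1}$, each isolated by a principal formula $\theta_i(\yy)$ which, by quantifier elimination, may be taken quantifier-free. Being quantifier-free and hence absolute for substructures, the $\theta_i(\yy)$'s partition the $\yy$-tuples of every $A\in \KK(\Gamma)$. For each $i<m$, I would fix a realizer $\bb_i$ of $\bar p_i$ in $\Gamma(\omega)$ and decompose $\phi(\Gamma(\omega),\bb_i)$ into finitely many type-pieces $\theta_{p_{i,j}}(\Gamma(\omega),\bb_i)$, one for each complete $1$-type $p_{i,j}\in S_1(\bb_i)$ consistent with $\phi$. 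Applying Proposition \ref{2.3.2.1} to each piece gives
$$\big|\theta_{p_{i,j}}(\Gamma(X),\bb_i)\big| \sim \mu_{i,j}\cdot |\Gamma(X)|^{\delta_{i,j}} \quad \text{as } |X|\to\infty,$$
with a constant contribution when $p_{i,j}$ is algebraic. Setting $d_i=\max_j\delta_{i,j}$ and $\mu_i=\sum_{j:\delta_{i,j}=d_i}\mu_{i,j}$, summation over $j$ will yield
$$\big||\phi(\Gamma(X),\bb)|-\mu_i|\Gamma(X)|^{d_i}\big|=o\big(|\Gamma(X)|^{d_i}\big)$$
uniformly for every $\bb\models\theta_i$ in $\Gamma(X)$. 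Since each $\delta_{i,j}=\deg(\sigma'_{i,j}/\sigma_{i,j})/\deg(\Gamma)$ is a rational with denominator dividing $\deg(\Gamma)$, every $d_i$ will lie in $\{0,\tfrac{1}{\deg(\Gamma)},\tfrac{2}{\deg(\Gamma)},\dots,1\}$, as required.

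For minimality, I would pick an infinity-node $\tau\in I(\Gamma)$ of minimal height; then $\deg_\Gamma(\tau)=I(\Gamma,\tau)=1$, so $|P_\tau(\Gamma(X))|=Q(\Gamma,\tau;|X|)=|X|$, while $|\Gamma(X)|=P(\Gamma;|X|)\sim A(\Gamma)|X|^{\deg(\Gamma)}$. Thus $\phi(x)=P_\tau(x)$ witnesses dimension $\tfrac{1}{\deg(\Gamma)}$ with asymptotic coefficient $A(\Gamma)^{-1/\deg(\Gamma)}$, and since $\tfrac{1}{\deg(\Gamma)}\notin\{0,\tfrac{1}{n},\dots,1\}$ whenever $n<\deg(\Gamma)$, the class $\KK(\Gamma)$ cannot be $n$-dimensional for such $n$.

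The main obstacle will be the bookkeeping in the upper-bound step: verifying that the constants $d_i,\mu_i$ are intrinsic to $\bar p_i$ rather than to the specific realizer $\bb_i$ chosen, and that lower-dimensional pieces are absorbed into $o(|\Gamma(X)|^{d_i})$. Both points should follow automatically once QE is used to identify $\tp(\bb\,a)$ with its quantifier-free tree-theoretic data $[a\wedge\bb]$, $\pi([a\wedge\bb])$, $\pi(a)$, which is precisely what governs the constants appearing in Proposition \ref{2.3.2.1}.
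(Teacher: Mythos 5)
Your upper-bound argument is essentially identical to the paper's: take the finite list of complete $|\yy|$-types, isolate each by a quantifier-free $\theta_i$, decompose $\phi(\Gamma(\omega),\bb_i)$ into complete $1$-types over $\bb_i$ isolated by quantifier-free formulas $\psi_{i,j}$, apply Proposition~\ref{2.3.2.1}, and set $d_i=\max_j \delta(\sigma'_{i,j}/\sigma_{i,j})$ with $\mu_i$ the sum of the coefficients at the top dimension. The paper even closes with exactly the remark you flagged as the ``main obstacle,'' namely that $\delta(\phi,\cc)$ and $\mu(\phi,\cc)$ depend only on $\tp(\cc/\emptyset)$ and that the corresponding $\theta_i$ can be taken quantifier-free; so your instinct that QE handles the bookkeeping is correct and matches the paper.

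Your minimality argument is a genuine addition: the paper's proof never actually addresses the clause ``not $n$-dimensional for any $n<\deg(\Gamma)$.'' Your choice of witness $P_\tau$ for $\tau\in I(\Gamma)$ of minimal height is the right one, and the computations $|P_\tau(\Gamma(X))| = Q(\Gamma,\tau;|X|) = |X|$ and $|\Gamma(X)|\sim A(\Gamma)|X|^{\deg(\Gamma)}$ are both correct. However, as written there is a gap against Definition~\ref{1-dim}, which permits $\mu_i\in[0,\infty)$: for any $n<\deg(\Gamma)$ one can take $(d,\mu)=(1/n,0)$, and then $\big||X| - 0\cdot|A|^{1/n}\big| = |X| \sim A(\Gamma)^{-1/\deg(\Gamma)}|A|^{1/\deg(\Gamma)} = o\big(|A|^{1/n}\big)$ since $1/\deg(\Gamma)<1/n$. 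So $P_\tau$ alone does not rule out $n$-dimensionality under the paper's literal definition. To make your argument close, you need to either (i) invoke the Macpherson--Steinhorn normalization that the pair $(d,\mu)$ be taken with $\mu>0$ whenever $|\phi(A,\bb)|$ is unbounded (equivalently, that $d$ is the \emph{least} exponent for which the estimate can hold with some $\mu$), which is the intended convention here and under which your argument is complete, or (ii) make the definability clause do the work by exhibiting a single formula with two parameter-definable regimes whose exponents cannot both be matched on the grid $\{0,1/n,\dots,1\}$ without forcing a positive coefficient at $1/\deg(\Gamma)$. Option (i) is the cleaner fix and should be stated explicitly.
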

 
 \begin{proof}
 	Let $\Gamma=(\Gamma, \lambda)$ be a tree plan with $\deg(\Gamma)=N$.
 	Suppose $\phi(x;y_0,\dots, y_{m-1})$ is an arbitrary $\LL_t$-formula.   Let $\aa_0, \dots, \aa_{r-1}\in \Gamma(\omega)^m$  such that $$\big(\forall \bb\in \Gamma(\omega)^m \big)\big(\exists ! \,\,i<r\big)\, \tp(\bb)=\tp(\aa_i). $$
 	For each $i<r$, let $\theta_i(\yy)$ be a quantifier-free formula which isolates $\tp(\aa_i)$. Then $\theta_0(\yy), \dots, \theta_{r-1}(\yy)$ partition $\big\{(A, \bb) : A\in\TT(\Gamma)\text{ and } \bb\in A^m\big\}$: i.e.,
 	\begin{itemize}
 		\item $Th(\Gamma)\models\bigvee_{i<r}\theta_i(\yy)$, and
 		\item $Th(\Gamma)\models \theta_i(\yy)\to \neg\theta_j(\yy)\,\,$ for $i\neq j$. 
 	\end{itemize}

 	For each $i<m$, let $b_{i,0}, \dots, b_{i, n_{i-1}}\in \Gamma(\omega)$ such that 
 	$$ \big( \forall b\in \phi(\Gamma(\omega), \aa_i)\big) \big( \exists ! \,\, j<n_i\big)\Big( \tp(b, \aa_i)  =  \tp(b_{i,j}, \aa_i) \Big)$$	
 	For each $i<r$ and each $j<n_i$, let $\psi_{i,j}(x,\yy)$ be a quantifier-free formula which isolates $\tp(b_{i,j}, \aa_i)$. Then $\phi(x,\yy)$ is equivalent in $Th(\Gamma)$ to
 	$$ \bigwedge\limits_{i<r}\Big(\theta_i(\yy)\to \bigvee\limits_{j<n_i} \psi_{i,j}(x,\yy) \Big) $$
 	where $\psi_{i,j}(x,\yy) = \qtp(b_{i,j}, \aa_i)$  and  $\pi(b_{i,j})=\sigma'_{i,j}\supseteq \sigma_{i,j}=\pi\big([b_{i,j}\wedge \tcl(\aa_i)]\big)$.   
 	
 	Case 1: if $\deg(\sigma'_{i,j}/\sigma_{i,j})=0$, then
 	\begin{align*}
 	\big\vert \vert\psi_{i,j}(\Gamma(X),b_{i,j})\vert -&\mu(\sigma'_{i,j}/\sigma_{i,j})\vert \Gamma(X)\vert^{\delta_\Gamma(\sigma'_{i,j}/\sigma_{i,j})}\big\vert \\ &=\big\vert 1-1\cdot \vert \Gamma(X)\vert^0\big\vert \\ &=0.
 	\end{align*}
 	Case 2: if $\deg(\sigma'_{i,j}/\sigma_{i,j})>0$, by Proposition \ref{mu}, for each $i<r$ and $j<n_i$, we have
 	$$\lim_{|X|\to\infty}\frac{\,\big|\psi_{i, j}(\Gamma(X),b_{i,j})\big|\,}{\big|\Gamma(X)\big|^{\delta_\Gamma(\sigma'_{i,j}/\sigma_{i,j})}} = \mu_\Gamma(\sigma'_{i,j}/\sigma_{i,j}).$$
 	It follows that 	$$\lim_{|X|\to\infty}\frac{\,\Big|\big|\psi_{i,j}(\Gamma(X),b_{i,j})\big| - \mu_\Gamma(\sigma'_{i,j}/\sigma_{i,j}) \big| \Gamma(X)\big|^{\delta_\Gamma(\sigma'_{i,j}/\sigma_{i,j})} \Big|\,}{\big|\Gamma(X)\big|^{\delta_\Gamma(\sigma'_{i,j}/\sigma_{i,j})}} = 0$$
 	Thus $\Big|\big|\psi_{i,j}(\Gamma(X),b_{i,j})\big| - \mu_\Gamma(\sigma'_{i,j}/\sigma_{i,j}) \big| \Gamma(X)\big|^{\delta_\Gamma(\sigma'_{i,j}/\sigma_{i,j})} \Big|=o\Big( \big|\Gamma(X)\big|^{\delta_\Gamma(\sigma'_{i,j}/\sigma_{i,j})}\Big)$.
 	
 	Now, let $(A, \cc)\in \big\{(A, \bb) : A\in\TT(\Gamma)\text{ and } \bb\in A^m\big\} $ 
 	There is $i=i(\cc)<r$ such that $A\models \theta_i(\cc)$. Then $\phi(A,\cc)= \bigcup_{j<n_i} \psi_{i,j}(A,\cc)$. Let $\delta(\phi, \cc)=\max\big\{ \delta_\Gamma(\sigma'_{i,j}/\sigma_{i,j})  : j<n_i \big\}$. Then
 	\begin{align*}
 	\big|\phi(A,\cc)\big| & = \sum_{j<n_i }\big|\psi_{i,j}(A,\cc)\big|  
 	\end{align*}
 	So by Proposition \ref{2.3.2.1}:
 	\begin{align*}
 	\lim\limits_{\vert A\vert\to\infty} \frac{\vert \phi(A,\cc) \vert}{\vert A \vert^{\delta(\phi,\cc)}} &=\lim\limits_{\vert A\vert\to\infty}\sum\limits_{j<n_i}\frac{\mu(\sigma'_{i,j}/\sigma_{i,j})}{\vert A\vert^{\delta(\phi,\cc)-\delta(\sigma'_{i,j}/\sigma_{i,j})} }\\
 	&= \sum\limits_{j<n_i, \,\,\, \delta(\sigma'_{i,j}/\sigma_{i,j})=\delta(\phi, \cc)} \mu(\sigma'_{i,j}/\sigma_{i,j}).
 	\end{align*}
 
 	It follows that $$\Big|\big|\phi(A, \cc)\big| - \mu(\phi, \cc) \big| A\big|^{\delta(\phi, \cc)} \Big|=o\Big( \big| A\big|^{\delta(\phi, \cc)} \Big).$$
 	Note that the values of $\delta(\phi, \cc)$ and $\mu(\phi, \cc)$ only depend on $\phi$ and $\tp(\cc/\emptyset)$, so only finitely many values appear for each $\phi$, and the $\cc$ which give those values are definable.
 \end{proof}


 \section{Characterization of  $\aleph_0$-categorical Trees}\label{Characterization of  aleph0-categorical Trees} 
 In this section, we will show that the $\aleph_0$-categorical theories of trees can be obtained from tree plans.
 
 \begin{thm}[The Characterization Theorem for $\aleph_0$-categorical Trees]\label{CharOmega-Cat}
 	Let $T$ be either an $\aleph_0$-categorical theory of infinite trees or  a theory of finite trees (as $\LL_t$-structures). Then there is a tree plan $\Gamma$ such that $T = \Th(\Gamma(\omega)\r\LL_t)$.
 \end{thm}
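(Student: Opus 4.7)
The finite case is immediate: if $T$ is a complete theory of finite trees, its unique model $A$ can be realized as a finite subtree $\Gamma \subseteq \omega^{<\omega}$ with $\lambda \equiv 1$, so $\Gamma(\omega)\r\LL_t = \Gamma \cong A$. The bulk of the work is the $\aleph_0$-categorical infinite case, where the plan is to manufacture a tree plan $\Gamma$ from the countable model $\M \models T$, enrich $\M$ to an $\LL_\Gamma$-structure $\M^+$, and identify $\M^+$ with $\Gamma(\omega)$ via the uniqueness clause of Theorem \ref{fras} combined with Proposition \ref{Gamma(S)_generic}.

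A preliminary step is to show that $\M$ has finite height. The formula $\pred^{n-1}(x) \neq \epsilon$ expresses ``height at least $n$''. By Ryll-Nardzewski (Theorem \ref{Ryll-Nard-Thm}), only finitely many such formulas are inequivalent modulo $T$, so the implication chain $\phi_1 \Leftarrow \phi_2 \Leftarrow \cdots$ stabilizes; non-trivial stabilization would force some element of $\M$ to have an infinite $\pred$-chain, contradicting well-foundedness of downsets in a tree. Hence heights in $\M$ are bounded by some fixed $h$.

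I would then build $\Gamma$ inductively on depth up to $h$: set $\lambda(\gen{}{}) = 1$ for the root (corresponding to $\epsilon^{\M}$); at a node $\sigma$ already built and associated to some $\aut(\M)$-orbit, pick a representative $a_\sigma$ and inspect its successors in $\M$, which by oligomorphicity of $\aut(\M/a_\sigma)$ fall into finitely many orbits. For each such orbit $O$ I add successors of $\sigma$ in $\Gamma$ as follows: one successor labeled $\infty$ if $O$ is infinite, one successor labeled $1$ if $|O|=1$, and $k$ sibling successors each labeled $1$ with identical recursively-built subtree plans if $|O|=k \geq 2$. Well-definedness (independence from the choice of $a_\sigma$) is automatic, since any two representatives of the same orbit are conjugate by an automorphism of $\M$. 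Because heights are bounded and every stage produces only finitely many new nodes, $\Gamma$ is finite.

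Finally, I enrich $\M$ to $\M^+$ by assigning each $b \in M$ a node $\pi^+(b) \in \Gamma$: canonically on singleton and infinite orbits, and via an arbitrary bijection onto the $k$ designated nodes for each finite orbit of size $k \geq 2$ among the successors of a fixed $a$. I would then verify that $\M^+$ is $\KK(\Gamma)$-universal, $\KK(\Gamma)$-closed, and $\KK(\Gamma)$-homogeneous---the first two by the construction of $\Gamma$, and the third by a back-and-forth that extends partial $\LL_\Gamma$-isomorphisms one tree-closed step at a time, leveraging at each stage that the multiplicities $1$ and $\infty$ built into $\Gamma$ match the multiplicities of orbits in $\M$. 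By Theorem \ref{fras}, $\M^+ \cong \Gamma(\omega)$, and restricting to $\LL_t$ yields $T = \Th(\Gamma(\omega)\r\LL_t)$. The main obstacle is the finite-orbit-of-size-$k \geq 2$ case: because $\lambda$ takes values only in $\{1,\infty\}$, such an orbit must be encoded by $k$ distinctly labeled but $\LL_t$-identical nodes of $\Gamma$, and showing that the arbitrary bijection chosen in the enrichment does not obstruct $\KK(\Gamma)$-homogeneity of $\M^+$ is the point requiring the most care.
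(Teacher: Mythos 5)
Your proposal takes essentially the same route as the paper: establish finite height via Ryll--Nardzewski, then decompose recursively by root-successors, attaching a $\lambda=\infty$ node for infinitely-repeated subtree types and separate $\lambda=1$ copies for finitely-repeated ones. The only cosmetic difference is that you partition the successors by $\aut(\M)$-orbit while the paper partitions them by isomorphism class of the subtree $\M_e$ (a potentially coarser partition, since non-conjugate nodes can carry isomorphic subtrees); both yield a valid tree plan $\Gamma$ with $\Gamma(\omega)\r\LL_t\cong\M$, yours possibly with a few more nodes. You also spell out the final identification $\M^+\cong\Gamma(\omega)$ via the \fraisse uniqueness clause and flag the size-$k\geq 2$ orbit labelling issue, whereas the paper disposes of this step with ``follows almost immediately by induction''; your version is more explicit about the step that actually requires care.
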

 \begin{proof}
 	Let $\M\models T$ and $|M|=\aleph_0$.	First, note that $\M$ has finite height. This is clear in the finite case. 
 	In the $\aleph_0$-categorical case, the formulas $\pred^k(x) = \epsilon$ cannot be pairwise inequivalent by Ryll-Nardzewski's  Theorem. We argue by induction on the height of $\M$.
 	If $\M$ has height 0, $\M = {\epsilon}$, so we can take $\Gamma$ to be the unique tree plan on $\{\gen{}{}\}$ which assigns 1 to the root. 
 	Suppose $\M$ has height $n+1$. Let $E$ be the set of successors of the root. For each $e$ in $E$, let $M_e={\uparrow}e$ be the subtree above $e$. Now there are only finitely many isomorphism classes in $\{\M_e : e\in E\}$. This is clear in the finite case, and it follows from the  Ryll-Nardzewski Theorem in the $\aleph_0$-categorical case. There are only finitely many types of elements in $E$, and if $\tp(e) = \tp(e')$, there is an automorphism of $\M$ moving $e$ to $e'$, which restricts to an isomorphism $\M_e$ to $\M_{e'}$. Take representatives $\M_{e_1},\dots,\M_{e_n}$ for the isomorphism classes. Each $\M_{e_i}$ is finite or $\aleph_0$-categorical, since it is definable in $\M$. And $\M_{e_i}$ has height $n$, so by induction hypothesis there is a tree plan $\Gamma_i$ such that $\M_{e_i}$ is isomorphic to $\Gamma_i(\omega)$. To build the tree plan $\Gamma$: For each $i$ between $1$ and $n$, if the isomorphism type of $\M_{e_i}$ appears finitely many times (say $k$ times) in $\{M_e : e \in E\}$, attach $k$ copies of $\Gamma_i$ to the root in $\Gamma$, and label each of their roots by $1$. If the isomorphism type of $\M_{e_i}$ appears infinitely many times in  $\{\M_e : e \in E\}$, attach one copy of $\Gamma_i$ to the root in $\Gamma,$ but make the root of this copy of $\Gamma_i$ an infinity node. It follows almost immediately by induction that $\M$ is isomorphic to $\Gamma(\omega)$.
 \end{proof}
 


 \section{Characterization of Asymptotic Classes of Trees}\label{Characterization of Asymptotic Classes of Trees}

  In this section, we will provide a characterization of supersimple trees with finite rank that eliminate $\exists^\infty$. We also show that asymptotic classes of finite trees yield $\aleph_0$-categorical ultraproducts.  Also we provide a characterization of  asymptotic classes of finite trees using the notion of tree plan. 
 
 	
 

 Throughout this section, let $\KK$ be an 
  asymptotic class (see Definition \ref{1-dim}) of finite trees with arbitrarily large members. We recall the following notation
 $$\mathbf{C}_\KK =\big\{ \M: \M \textrm{ is an infinite ultraproduct of members of } \KK  \big\},$$
 $$Th(\mathbf{C}_\KK) = \bigcap\big\{Th(\M):\M\in \mathbf{C}_\KK \big\},$$ and $$\TT_{\mathbf{C}_\KK} =\big\{Th(\M):\M\in \mathbf{C}_\KK\big\}.$$ 
 That is, $Th(\mathbf{C}_\KK)$ is the common theory of all infinite ultraproducts of members of $\KK$, and $\TT_{\mathbf{C}_\KK}$ is the set of completions of $Th(\mathbf{C}_\KK)$. So $Th(\mathbf{C}_\KK)=\bigcap\TT_{\mathbf{C}_\KK}$.

 \begin{fact}\label{fact:finite-height}
 If $\KK$ is an 
 asymptotic class of finite trees with arbitrary large members, then there is an upper bound for the height of members of $\KK$. Notice, towards a contradiction, that if for each $n<\omega$, there is a tree $A_n\in \KK$ with height at least $n$, then the formula $x<y$ witnesses that any 
 infinite ultraproduct of $A_i's$ has SOP which is a contradiction because it is supersimple, and so it is NSOP.
 \end{fact}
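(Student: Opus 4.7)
The plan is to formalize the contradiction already sketched inside the statement. The first step is to suppose toward contradiction that the heights are unbounded, so that for each $n < \omega$ I can pick $A_n \in \KK$ with $\h(A_n) \geq n$. By the definition of height and the well-ordering of downsets in a tree (Definition \ref{defn-tree}), each such $A_n$ contains a strict $<$-chain of length $n+1$, so the sentence
$$\phi_n \equiv \exists x_0 \cdots \exists x_n \bigwedge_{i < j \leq n} x_i < x_j$$
holds in $A_m$ for every $m \geq n$. Fixing a non-principal ultrafilter $\U$ on $\omega$, I would form the ultraproduct $\M := \prod_{n<\omega} A_n/\U$; since $|A_n| \geq n+1$, the ultraproduct is infinite, so $\M \in \mathbf{C}_\KK$.

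Next, by \Los's theorem, $\M \models \phi_n$ for every $n$, so $\M$ contains arbitrarily long finite $<$-chains. A standard compactness argument on the partial type $\{x_i < x_j : i < j < \omega\} \cup \{x_i \neq x_j : i \neq j\}$ (which is finitely satisfiable in $\M$ by the previous sentence) produces an infinite strictly $<$-increasing sequence in some elementary extension of $\M$. Consequently, the formula $x < y$ witnesses that $\Th(\M)$ has the strict order property.

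The final step is to extract the contradiction. Because $\KK$ is an asymptotic class and $\M$ is an infinite ultraproduct of members of $\KK$, Proposition \ref{supersimple finite su-rank} gives that $\Th(\M)$ is supersimple, hence simple, hence NSOP (a classical fact about simple theories, see \cite{Wagner-SimpleTheories}). This contradicts the conclusion of the previous paragraph, completing the proof. The only potentially delicate point is the lift from arbitrarily long finite chains to a genuine infinite chain witnessing SOP, and this is handled by the routine compactness step above; everything else is a direct invocation of results already cited in the paper.
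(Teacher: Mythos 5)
Your proof is correct and is a direct formalization of the sketch the paper embeds in the Fact itself: unbounded heights give arbitrarily long $<$-chains, \Los{} pushes these through a nonprincipal ultraproduct, compactness produces an infinite chain in an elementary extension so $x<y$ witnesses SOP, and Proposition~\ref{supersimple finite su-rank} plus simple~$\Rightarrow$~NSOP yields the contradiction. This is the same approach the paper takes, just with the routine details spelled out.
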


Recall that a theory $T$ eliminates the quantifier $\exists^\infty$ if for every formula $\phi(x,\yy) $,  the set of tuples $\bb$ in a model $\mathcal{M} \models T$ such that $\phi(\M,\bb)$ is infinite, is definable. Equivalently, by compactness, there exists a positive integer $N = N(\phi)$ such that for every model $\mathcal{\M} \models T$ and tuple $\bar{b}\in M^{|\yy|}$, the set $\phi(\M,\bb)$ is either finite of size less than or equal to $N$, or infinite.

\begin{thm}\label{characterization of supersimple trees}
Let $\M$ be an infinite tree (as $\LL_t$-structure). The following statements are equivalent.
\begin{enumerate}
\item $\M\equiv \Gamma(\omega)$ for some tree plan $\Gamma$.
\item $\M$ is elementarily equivalent to an infinite ultraproduct of an N-dimensional asymptotic class of finite trees.
\item $Th(\M)$ is supersimple, finite rank and  eliminates $\exists^\infty$.
\item $Th(\M)$ has finite height and eliminates $\exists^\infty$.
\item $Th(\M)$ is $\aleph_0$-categorical. 
\end{enumerate}
\end{thm}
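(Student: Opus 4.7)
I would prove the equivalence via the cycle $(5)\Rightarrow(1)\Rightarrow(2)\Rightarrow(3)\Rightarrow(4)\Rightarrow(5)$. The first two arrows are short. For $(5)\Rightarrow(1)$, apply Theorem \ref{CharOmega-Cat} to a countable model of $T=Th(\M)$, producing a tree plan $\Gamma$ with $\M\equiv\Gamma(\omega)$. For $(1)\Rightarrow(2)$, combine Theorem \ref{deg(Gamma)-dim-asymptotic-class}, which identifies $\KK(\Gamma)$ as a $\deg(\Gamma)$-dimensional asymptotic class, with the pseudofiniteness of $Th(\Gamma)$ established in Section \ref{section-EF-game}; this yields a family $(A_n)\subseteq\KK(\Gamma)$ with $|A_n|\to\infty$ such that $\Gamma(\omega)\equiv\prod_\U A_n$ for any non-principal ultrafilter $\U$ on $\omega$.

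For $(2)\Rightarrow(3)$, Proposition \ref{supersimple finite su-rank} supplies supersimplicity of SU-rank at most $N$. For elimination of $\exists^\infty$, the asymptotic estimates of Definition \ref{1-dim} give, for each formula $\phi(x,\yy)$, a definable partition $\theta_0(\yy),\ldots,\theta_{k-1}(\yy)$ of parameter space. On the zero-dimensional pieces the integer $|\phi(A,\bb)|$ stabilizes at the integer $\mu_i$ for large $A$, whereas on the positive-dimensional pieces it diverges with $|A|$. By \los, $|\phi(\M,\bb)|$ is therefore uniformly bounded in the former case and infinite in the latter, and the set $\{\bb:|\phi(\M,\bb)|=\infty\}$ is the definable union of the positive-dimensional $\theta_i$, giving uniform $\exists^\infty$-elimination. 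The implication $(3)\Rightarrow(4)$ is immediate: if $T$ admitted models of arbitrarily large height, compactness would produce a model containing an infinite $<$-chain, witnessing SOP for $x<y$ and contradicting the NSOP consequence of supersimplicity.

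The principal obstacle is $(4)\Rightarrow(5)$. I would induct on the height $H$ of $T$. The base $H=0$ is trivial. For the step from $\leq k$ to $k+1$, fix a countable model $\M\models T$ and mirror the construction in the proof of Theorem \ref{CharOmega-Cat}, assembling a tree plan $\Gamma$ level by level. For any $a\in\M$, the induced theory of the subtree $\M_a=\{b\in M:a\leq b\}$ naming $a$ has height at most $k$ and still eliminates $\exists^\infty$, so by the inductive hypothesis it is $\aleph_0$-categorical; in particular the successors of $a$ realize only finitely many types over $a$. The uniform constant $N(\phi)$ from $\exists^\infty$-elimination applied to $\phi(x,y)=(\pred(x)=y)$ then forces each type-class of successors of $a$ to have size at most $N(\phi)$ or to be infinite, a dichotomy depending definably on $\tp(a)$. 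Attaching to the corresponding node of $\Gamma$ either the appropriate finite bunch of copies or a single $\infty$-labeled copy produces a tree plan with $T=Th(\Gamma(\omega))$, whence $\aleph_0$-categoricity follows from the corollary to Proposition \ref{Gamma(S)_generic}. The genuine difficulty is ensuring that the inductive data (finite type counts and branching bounds) are \emph{uniform} across all models of $T$ rather than accidents of the chosen $\M$; this is exactly what the uniform form of $\exists^\infty$-elimination delivers, and it is where the argument needs to be assembled most carefully.
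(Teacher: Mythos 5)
Your proposal follows essentially the same route as the paper's proof: the equivalence $(1)\Leftrightarrow(5)$ via Theorem \ref{CharOmega-Cat}, then $(1)\Rightarrow(2)$ via $\KK(\Gamma)$ and pseudofiniteness, $(2)\Rightarrow(3)\Rightarrow(4)$ via supersimplicity and NSOP, and the hard direction $(4)\Rightarrow(1)$ (you write $(4)\Rightarrow(5)$, but these are interchangeable once $(1)\Leftrightarrow(5)$ is in hand) by induction on height. Where you go beyond the paper is in $(2)\Rightarrow(3)$: the paper simply cites Proposition \ref{supersimple finite su-rank} for the whole chain, which only delivers supersimplicity of bounded SU-rank, and leaves elimination of $\exists^\infty$ implicit; your argument — that the zero-dimensional $\theta_i$ give a uniform finite bound while the positive-dimensional ones go to infinity, with \los{} transferring the dichotomy — is the correct and standard way to extract $\exists^\infty$-elimination from the asymptotic-class estimates, and it is a genuine improvement in completeness over what is written in the paper. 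For $(4)\Rightarrow(5)$, your level-by-level construction matches the paper's, and the point you flag at the end — that one must justify there are only \emph{finitely many} isomorphism types of subtrees $\M_e$ at each level, uniformly in the model — is precisely the step the paper also dispatches with a single sentence (``because of elimination of $\exists^\infty$ we have finitely many possibilities, up to isomorphism''), so the degree of rigor is comparable; you are right that this is where a fully careful write-up needs the most work, since the inductive hypothesis gives $\aleph_0$-categoricity of each $\Th(\M_e)$ individually but not immediately a finite bound on the number of distinct such theories occurring among $\{\M_e : e\in E\}$, and closing that gap requires combining the $\exists^\infty$ bounds across all heights with the finite height of $\M$.
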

\begin{proof}
By Theorem \ref{CharOmega-Cat}, it's immediate that $(1)\Leftrightarrow (5)$. For (1) $\Rightarrow$ (2), it's enough to consider the asymptotic class $\KK(\Gamma)$ (see Definition \ref{Def-K(Gamma)}). By Fact \ref{supersimple finite su-rank}, it is clear that (2) $\Rightarrow$ (3) $\Rightarrow$ (4).
To complete the proof it is enough to show that $(4) \Rightarrow (1)$.  Let $\M$ be a countably infinite tree  of finite height that eliminates $\exists^\infty$. We do induction on the height of $\M$ similar to the proof of \ref{CharOmega-Cat}.  Note that because of elimination of $\exists^\infty$ and finiteness of the height,  only finitely many tree plans (up to isomorphism) are possible for subtrees of height $\leq n$. By gluing these tree plans we get our desire tree plan $\Gamma$. More precisely, 
if $\h(\M)=0$, then $\M=\epsilon$. So we let  $\Gamma$  to be the tree plan $\gen{}{}$ which assigns 1 to the root. Suppose the statement is true for every tree of height $\leq n$. Let $\h(\M)=n+1$. Let $E$ be the set of successors of the root. For each $e\in E$, let $\M_e$ be the subtree above $e$. If $E$ is finite then  $E=\{e_0, \dots, e_{k-1}\}$ for some $k<\omega$. For each $i<k$, the subtree $\M_{e_i}$ has height $n$, and so by induction hypothesis there is a tree plan $\Gamma_i$ such that $\M_{e_i}\cong \Gamma_i(\omega)$. Let $\Gamma_0=\{\gen{}{}\}\cup \{\gen{i}{} : i<k\}$ be a tree plan such that $\lambda(\gen{}{})=\lambda(\gen{i}{})=1$ for each $i<k$. To build the tree plan $\Gamma$, we start with $\Gamma_0$ and glue $\Gamma_{e_i}$ to $\gen{i}{}$ for each $i<k$. For the case that $E$ is an infinite set, because of elimination of $\exists^\infty$ we have finitely many possibilities, up to isomorphism, for subtrees above the members of $E$. By the induction hypothesis there exists a tree plan for each $\M_e$. And if we continue this process for successor of members of $E$, this process will stop because  the height of $\M$ is finite. Therefore we can build $\Gamma$ by gluing finitely many tree plans.
\end{proof} 

The following Corollaries are immediate from Theorem \ref{CharOmega-Cat} and Theorem \ref{characterization of supersimple trees}.

 \begin{cor}\label{Prop-finiteness+OmegaCat} 
 	Let $\KK$ be a $N$-dimensional asymptotic class of finite trees with arbitrarily large members. 
 	Then 
 	\begin{enumerate}
 		\item $\TT_{\mathbf{C}_\KK}$ is finite.
 		\item Every $T\in \TT_{\mathbf{C}_\KK}$ is $\aleph_0$-categorical.
 	\end{enumerate}
 \end{cor}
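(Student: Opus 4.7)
The plan is to derive both parts of this corollary directly from Theorem \ref{characterization of supersimple trees} and Theorem \ref{CharOmega-Cat}, using the asymptotic class machinery to supply the hypotheses.

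For part (2), let $T \in \TT_{\mathbf{C}_\KK}$ be arbitrary, so $T = Th(\M)$ for some infinite ultraproduct $\M$ of members of $\KK$. By Proposition \ref{supersimple finite su-rank}, $T$ is supersimple of SU-rank at most $N$. The key additional step is to verify that $T$ eliminates $\exists^\infty$. For this, fix a formula $\phi(x,\yy)$ with its asymptotic data $\D_\phi$. For each triple $(d_i,\mu_i,\theta_i)\in \D_\phi$ with $d_i=0$, the condition $\big||\phi(A,\bb)|-\mu_i\big|=o(1)$ together with the integrality of cardinalities forces $\mu_i\in\NN$ and $|\phi(A,\bb)|=\mu_i$ for all sufficiently large $A\in\KK$ satisfying $\theta_i(\bb)$; for $d_i>0$, $|\phi(A,\bb)|$ grows without bound with $|A|$. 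Passing to the ultraproduct, $\phi(\M,\bb)$ is finite (and of size $\mu_i$) precisely when $\bb$ satisfies some $\theta_i$ with $d_i=0$, and the partition property of $\D_\phi$ makes this condition first-order expressible as $\bigvee_{d_i=0}\theta_i(\yy)$. Hence $T$ eliminates $\exists^\infty$. Combined with finite height (Fact \ref{fact:finite-height}), condition (4) of Theorem \ref{characterization of supersimple trees} is met, so $T$ is $\aleph_0$-categorical.

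For part (1), by part (2) every $T\in\TT_{\mathbf{C}_\KK}$ is $\aleph_0$-categorical, so by Theorem \ref{CharOmega-Cat} it equals $Th(\Gamma(\omega)\r\LL_t)$ for some tree plan $\Gamma$. Since non-isomorphic tree plans produce non-isomorphic generic models and hence distinct complete theories, it suffices to bound the isomorphism types of tree plans arising from $\mathbf{C}_\KK$. Two finite parameters determine the size of a tree plan: its height, and the maximum number of $1$-successors at any node. The height is uniformly bounded by the constant $H$ from Fact \ref{fact:finite-height}. To bound the $1$-branching, apply the asymptotic class data $\D_\phi$ to $\phi(x,y):=(\pred(x)=y\wedge x\neq \epsilon)$; by the analysis above, any node $b\in\Gamma(\omega)$ of type $\theta_i$ with $d_i=0$ has exactly $\mu_i\in\NN$ successors, and any node with $d_i>0$ has infinitely many. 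Setting $N_0=\max\{\mu_i:(d_i,\mu_i,\theta_i)\in\D_\phi,\,d_i=0\}$ bounds the number of $1$-successors at any node of $\Gamma$. Since tree plans of height $\leq H$ with $1$-branching $\leq N_0$ form a finite set up to isomorphism, $\TT_{\mathbf{C}_\KK}$ is finite.

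The main obstacle is the elimination of $\exists^\infty$ argument: although conceptually straightforward, it relies on the observation that the $o(1)$-error regime at $d_i=0$ is only consistent with integer cardinalities when $\mu_i\in\NN$ and the finite values stabilize. A secondary point is matching the asymptotic data for the predecessor formula to the combinatorial data of the tree plan; this uses the fact that nodes of $\Gamma(\omega)$ with the same image under $\pi$ have the same successor count, so the partition by $\theta_i$'s refines (or coincides with) the partition of nodes of $\Gamma$ by their label structure. Once both of these technical identifications are made, the corollary is a direct consequence of the two theorems.
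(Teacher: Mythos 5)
Your argument for part (2) is correct, but you are doing more work than the theorem demands. Theorem \ref{characterization of supersimple trees} already lists condition (2) --- ``$\M$ is elementarily equivalent to an infinite ultraproduct of an $N$-dimensional asymptotic class of finite trees'' --- as equivalent to condition (5) --- ``$Th(\M)$ is $\aleph_0$-categorical.'' Any $T\in\TT_{\mathbf{C}_\KK}$ satisfies condition (2) by definition of $\TT_{\mathbf{C}_\KK}$, so (5) follows immediately. Your re-derivation of elimination of $\exists^\infty$ from the asymptotic data is exactly the content of the implication $(2)\Rightarrow(3)$ inside the theorem's proof, so you are reproving something rather than citing it. It is not wrong, just redundant.

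Your argument for part (1) has a genuine gap, in two places. First, the claim ``non-isomorphic tree plans produce non-isomorphic generic models and hence distinct complete theories'' is false at the level of $\LL_t$-reducts. For example, a tree plan whose root has one $\infty$-leaf and a tree plan whose root has two $\infty$-leaves (with no $1$-successors) are non-isomorphic, yet $\Gamma(\omega)\r\LL_t$ is in both cases the tree consisting of a root with infinitely many leaves, so the theories coincide. This is not fatal to your logic (you only need the map from tree plans to theories to be surjective with finite domain, not injective), but it signals that you have not confronted the fact that the tree plan is not recoverable from the $\LL_t$-theory. Second, and more seriously, the bound $N_0$ from $\D_\phi$ for $\phi(x,y)=(\pred(x)=y\wedge x\neq\epsilon)$ does \emph{not} bound the $1$-branching of $\Gamma$. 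That formula counts the total number of successors of a node. If $\sigma\in\Gamma$ has any $\infty$-successor, then every $b$ with $\pi(b)=\sigma$ has infinitely many successors, so $b$ falls under some $\theta_i$ with $d_i>0$ and the data $\D_\phi$ says nothing at all about how many of those successors are $\acl$-bounded (i.e., $1$-successors in the plan). Your $N_0$ only controls the $1$-branching at nodes that already have finitely many successors. And separately, the number of $\infty$-successors at a node is completely unconstrained by $\D_\phi$, so the set of tree plans with height $\leq H$ and ``$1$-branching $\leq N_0$'' is in fact infinite, contrary to your last sentence. A correct argument must bound the branching structure of $\Gamma(\omega)\r\LL_t$ by induction on height: at each level, use uniform elimination of $\exists^\infty$ (uniform over $\mathbf{C}_\KK$, which the asymptotic class hypothesis provides) together with the inductive bound on the number of isomorphism types of upper subtrees of smaller height to conclude that each node realizes one of boundedly many ``successor profiles.'' This recursion is what makes the finiteness of $\TT_{\mathbf{C}_\KK}$ go through; the single-formula argument you gave does not.
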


 \begin{cor}[Characterization of Asymptotic Classes of Trees]\label{Char1-dimAsymp} 
 Let $\KK$ be an isomorphism closed class of finite trees (as $\LL_t$-structures).  $\KK$ is an asymptotic class, if and only if there are finitely many tree plans $(\Gamma_0, \lambda_0), \dots, (\Gamma_{k-1}, \lambda_{k-1})$ such that 
 		$$\TT_{\mathbf{C}_\KK}=\bigg\{ Th\big(\Gamma_i(\omega)\big) : i<k \bigg\}. $$
 \end{cor}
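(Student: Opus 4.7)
The plan is to address each direction separately. The forward direction will follow almost immediately from earlier machinery; the reverse will require combining the asymptotic estimates from the individual $\KK(\Gamma_i)$'s into a single uniform estimate on $\KK$.

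For $(\Rightarrow)$, suppose $\KK$ is an asymptotic class. Corollary \ref{Prop-finiteness+OmegaCat} gives that $\TT_{\mathbf{C}_\KK}$ is finite and that every $T\in\TT_{\mathbf{C}_\KK}$ is $\aleph_0$-categorical. Each such $T$ is the complete theory of an infinite tree arising as an ultraproduct from $\KK$, so the Characterization Theorem for $\aleph_0$-categorical Trees (Theorem \ref{CharOmega-Cat}) yields a tree plan $\Gamma_T$ with $T=\Th(\Gamma_T(\omega))$. Enumerating the finitely many theories produces the desired $\Gamma_0,\dots,\Gamma_{k-1}$.

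For $(\Leftarrow)$, assume $\TT_{\mathbf{C}_\KK}=\{\Th(\Gamma_i(\omega)):i<k\}$. Since the $k$ completions are pairwise distinct, I would pick $\LL_t$-sentences $\psi_0,\dots,\psi_{k-1}$ with $\Gamma_i(\omega)\models\psi_i$ and $\Gamma_j(\omega)\models\neg\psi_i$ for $j\neq i$. A standard \L o\'s argument on non-principal ultraproducts then shows that every sufficiently large $A\in\KK$ satisfies exactly one $\psi_i$: if arbitrarily large $A$ satisfied $\bigwedge_i\neg\psi_i$ or $\psi_i\wedge\psi_j$ for some $j\neq i$, an ultraproduct of them would produce $\M\in\mathbf{C}_\KK$ whose complete theory lies outside $\{\Th(\Gamma_i(\omega))\}$, contradicting the hypothesis. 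Given a formula $\phi(x,\yy)$, I would then strengthen each $\psi_i$ to a sentence $\chi_i$ (by conjoining finitely many $\Th(\Gamma_i(\omega))$-sentences of quantifier rank exceeding $\qr(\phi)$) so that $A\models\chi_i$ pins down enough of the local tree structure around any parameter tuple. Using the data $\D^i_\phi=\{(d_{i,j},\mu_{i,j},\theta_{i,j}(\yy)):j<J_i\}$ supplied by Theorem \ref{deg(Gamma)-dim-asymptotic-class}, I would form
\[
\D_\phi=\left\{\,\bigl(d_{i,j},\mu_{i,j},\chi_i\wedge\bigwedge_{i'<i}\neg\chi_{i'}\wedge\theta_{i,j}(\yy)\bigr):i<k,\,j<J_i\,\right\}
\]
together with a garbage class covering the bounded-size exceptional $A\in\KK$ satisfying none of the $\chi_i$.

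The hard part will be verifying that $A\models\chi_i$ indeed forces $|\phi(A,\bb)|$ to obey the same asymptotic polynomial as in $\KK(\Gamma_i)$. This relies on the fact that $\Th(\Gamma_i(\omega))$ has quantifier elimination (the Corollary after Theorem \ref{ultrahom+aleph0-cat---->>QE}), so $\phi$ is equivalent modulo this theory to a quantifier-free formula whose solutions in finite trees of the right local shape are counted precisely by the polynomials $P(\Gamma_i;\,x)$ and $Q(\Gamma_i,\sigma,\sigma';\,x)$ from Section \ref{K(Gamma) is an Asymptotic Class}. Once this transfer is in place, the asymptotic estimate on large $A\models\chi_i$ reduces to the estimate already established for $\KK(\Gamma_i)$, and the $N=\max_i\deg(\Gamma_i)$-dimensional asymptotic-class axioms for $\KK$ follow immediately.
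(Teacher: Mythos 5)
Your forward direction is exactly what the paper has in mind: Corollary \ref{Prop-finiteness+OmegaCat} gives that $\TT_{\mathbf{C}_\KK}$ is finite and consists of $\aleph_0$-categorical theories, and Theorem \ref{CharOmega-Cat} then produces the tree plans. The paper itself offers no more detail, simply calling the corollary immediate.

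The reverse direction contains a genuine gap that you acknowledge (``the hard part'') but cannot close by the route you sketch, and indeed I do not believe the implication holds as stated. A finite conjunction $\chi_i$ of $\Th(\Gamma_i(\omega))$-sentences controls a finite tree $A$ only up to $\equiv_{\qr(\chi_i)}$; it does not force $A\cong\Gamma_i(X)$ for any $X$, so the polynomials $P(\Gamma_i;\,x)$ and $Q(\Gamma_i,\sigma,\sigma';\,x)$ of Section \ref{K(Gamma) is an Asymptotic Class} simply do not describe counting in $A$. To see that the gap is unbridgeable, let $\Gamma$ be the height-two tree plan with both non-root nodes $\infty$-nodes, and for even $n$ let $A_n$ be the height-two tree with $n$ depth-one nodes, $n/2$ of which have $n$ leaf-children and $n/2$ of which have $n^2$ leaf-children; take $\KK$ to be the isomorphism-closure of $\{A_n\}$. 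Every infinite ultraproduct of the $A_n$ has infinitely many depth-one nodes, each with infinitely many leaf-children and no deeper nodes, hence is elementarily equivalent to $\Gamma(\omega)\r\LL_t$; so $\TT_{\mathbf{C}_\KK}=\{\Th(\Gamma(\omega))\}$ and the hypothesis holds with $k=1$. But $|A_n|\sim n^3/2$, and for $\phi(x,y)$ saying $\pred(x)=y$ with $y$ a depth-one parameter, $|\phi(A_n,y)|$ is $n\sim 2^{1/3}|A_n|^{1/3}$ for one kind of $y$ and $n^2\sim 2^{2/3}|A_n|^{2/3}$ for the other, yet no $\LL_t$-formula $\theta(y)$ of fixed quantifier rank separates the two kinds once $n$ is large (both are depth-one nodes all of whose many children are leaves). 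No finite $\D_\phi$ can satisfy the asymptotic-class axioms, so $\KK$ is not an asymptotic class. The hypothesis on $\TT_{\mathbf{C}_\KK}$ constrains only the ultraproduct theories and cannot by itself deliver the uniform finitary counting the definition requires; a correct converse would need a stronger hypothesis, for instance that $\KK$ eventually coincides with a finite union of the classes $\KK(\Gamma_i)$.
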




 \section{More on the Model Theory of $\Gamma(\omega)$}\label{model theoretic properties of Gamma(omega)}
 In this section we study basic model theoretic properties of the generic model $\Gamma(\omega)$ including types and forking behavior. 
 \subsection{Types in $\Gamma(\omega)$}

 \begin{conv}
 	For brevity, we write: $\tcl(B)$ in place of $\tcl^{\Gamma(\omega)}(B)$, 
 	$\acl(B)$ in place of $\acl^{\Gamma(\omega)}(B)$, and 
 	$\tp_\Gamma(...)$ when we mean $\tp^{\Gamma(\omega)}(...)$.
 	
 \end{conv}

 \begin{lemma}\label{tcl=acl} 
 	$\acl = \tcl$.
 \end{lemma}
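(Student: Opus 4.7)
The plan is to prove both containments $\tcl(B)\subseteq\acl(B)$ and $\acl(B)\subseteq\tcl(B)$ for every finite $B\subset_{\fin}\Gamma(\omega)$.

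For $\tcl(B)\subseteq\acl(B)$, I would argue by induction on the stages $\tcl_n(B)$ from Definition \ref{defn_tree-closure}, showing in fact the stronger statement $\tcl(B)\subseteq\dcl(B)$. At stage $0$, the root $\gen{}{}$ is the interpretation of a constant symbol, and every $a\in {\downarrow}B$ has the form $\pred^k(b)$ for some $b\in B$ and some $k$, which is definable from $B$. For the inductive step, if $a\in\tcl_{n}(B)\subseteq\dcl(B)$ and $a'$ is a successor of $a$ with $\lambda(\pi(a'))=1$, then by construction of $\Gamma(\omega)$ there is a unique element of $\Gamma(\omega)$ satisfying $\pred(x)=a\wedge P_{\pi(a')}(x)$, namely $a'$; hence $a'\in\dcl(a)\subseteq\dcl(B)$.

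For the converse $\acl(B)\subseteq\tcl(B)$, I would prove the contrapositive: if $a\notin\tcl(B)$, then $a$ has an infinite orbit under $\aut(\Gamma(\omega)/B)$. Let $e = [a\wedge B;\omega]^\Gamma$ and let $a_1$ be the unique successor of $e$ lying on the path from $e$ up to $a$ (so $a_1 = \pred^{k-1}(a)$ where $k = \h(a)-\h(e)$). By the maximality of $e$ we have $a_1\notin\tcl(B)$, and hence $\lambda(\pi(a_1))=\infty$; otherwise the closure rule of Definition \ref{defn_tree-closure} would force $a_1\in\tcl(B)$. Since $\pi(a_1)$ is an infinity-node, there are infinitely many distinct elements $b\in\Gamma(\omega)$ with $\pred(b)=e$ and $\pi_\omega(b)=\pi_\omega(a_1)$.

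For each such $b$, Lemma \ref{extend_pi_embeddings} lets me extend $\mathrm{id}_{\tcl(B)}$ to an embedding $f_b:\tcl(Ba_1)\to\Gamma(\omega)$ sending $a_1$ to $b$, and then iteratively along the chain $a_1<a_2<\dots<a_k=a$ to an embedding defined on $\tcl(Ba)$. Since $\Gamma(\omega)$ is $\KK(\Gamma)$-ultrahomogeneous by Proposition \ref{Gamma(S)_generic}, each such $f_b$ extends to an automorphism $g_b\in\aut(\Gamma(\omega)/B)$ (it fixes $\tcl(B)\supseteq B$ pointwise). Since $\pred$ is a function, distinct choices of $b$ yield distinct images $g_b(a)$: if $g_b(a)=g_{b'}(a)$, then $b=g_b(a_1)=\pred^{k-1}(g_b(a))=\pred^{k-1}(g_{b'}(a))=g_{b'}(a_1)=b'$. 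Hence the orbit of $a$ over $B$ is infinite, so $a\notin\acl(B)$.

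I do not expect a serious obstacle; the only point that requires care is verifying that Lemma \ref{extend_pi_embeddings} applies at the very first step (where we must place $a_1$ freely above $e$), which is exactly the case covered by that lemma since $\lambda(\pi(a_1))=\infty$ guarantees infinitely many available targets.
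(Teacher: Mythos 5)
Your proof is correct and follows essentially the same route as the paper's: the forward containment by induction on the closure stages, and the reverse containment by exhibiting an infinite orbit for any $a\notin\tcl(B)$. You sharpen two points the paper leaves implicit --- you actually get $\tcl(B)\subseteq\dcl(B)$ rather than merely $\tcl(B)\subseteq\acl(B)$, and you spell out the automorphism construction that the paper only asserts. Two small cautions on the reverse direction: Lemma \ref{extend_pi_embeddings} as stated only guarantees \emph{some} extension, not one hitting a prescribed target, so you should either appeal to the construction inside its proof or directly check that $\mathrm{id}_{\tcl(B)}\cup\{a_1\mapsto b\}$ generates a partial isomorphism and invoke $\KK(\Gamma)$-ultrahomogeneity; and you should restrict to the cofinitely many $b$ with $b\notin\tcl(B)$ so that this map is injective. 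Neither is a real gap.
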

 \begin{proof}
 	Let $B\subset \Gamma(\omega)$. First, by induction on $n$ we show that $\bigcup_n \tcl_n(B)=\tcl(B)\subseteq \acl(B)$. 	
 	\begin{itemize}
 		\item If $n=0$, then we show that $\tcl_0(B)\subseteq \acl(B)$. Let $a\in \tcl_0(B)$, then $a\leq b$ for some $b\in B$. There is a number $k<\omega$ such that $\pred^k(b)=a$. So the formula $\pred^k(b)=x$ is an algebraic formula witnesses that $a\in \acl(B)$. 
 		
 		\item	Suppose for all $\ell\leq n$, $\tcl_\ell(B)\subseteq \acl(B)$. Let  $a\in \tcl_{n+1}(B)$, then $\pred(a)\in \tcl_n(B)$ and $\lambda(\pi(a))=1$. Get $b\in \tcl_n(B)$ such that $\pred(a)=b$. Since $b\in \acl(B)$, the formula $\pred(x)=b \wedge P_{\pi(a)}(x)$ witnesses that $a\in \acl(B)$.
 	\end{itemize}
 	Now we show that $\acl(B)\subseteq \tcl(B)$. Let $a\notin \tcl(B)$ and $e=[a\wedge \tcl(B)]$. If there is no  infinity point between $e$ and $a$, then $a\in \tcl(B)$. So there must be an infinity point, say $b$, between $e$ and $a$.  So there are infinitely many copies of $b$ with the same types over $B$.  It follows that 
 	$$\big| \big\{ g(a)  :  g\in Aut\big(\Gamma(\omega)/B \big) \big\} \big|=\infty $$
 	and hence $a\notin \acl(B)$. 
 \end{proof}
 
 \begin{fact}\label{pi_determines_tp}
 	Let $a,a',b,b'\in\Gamma(\omega)$. 
 	\begin{enumerate}
 		\item If $\pi(a) = \pi(a')$, then $\tp_\Gamma(a) = \tp_\Gamma(a')$.
 		\item If $a<b$, $\pi(a) = \pi(a')$, and $\pi(b)=\pi(b')$, then $\tp_\Gamma(a,b) = \tp_\Gamma(a',b')$ 
 	\end{enumerate}
 \end{fact}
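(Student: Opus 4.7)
The plan is to reduce both parts to an application of the $\KK(\Gamma)$-ultra-homogeneity of $\Gamma(\omega)$ established in Proposition \ref{Gamma(S)_generic}, by exhibiting explicit partial isomorphisms between finite tree closures. The crucial observation is that $\pi$ commutes with $\pred$: for every $c \in \Gamma(\omega)$ one has $\pi(\pred(c)) = \pred(\pi(c))$, so the full sequence $\pi(c), \pi(\pred(c)), \pi(\pred^2(c)), \dots$ — and hence the tree closure $\tcl(\{c\})$ up to $\LL_\Gamma$-isomorphism — is determined by $\pi(c)$ alone.

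For (1), I would consider the finite tree closures $A = \tcl(\{a\})$ and $A' = \tcl(\{a'\})$; by Lemma \ref{tcl=acl} these are finite $\LL_\Gamma$-substructures of $\Gamma(\omega)$, and by $\KK(\Gamma)$-closedness each sits inside some $\Gamma(n) \leq \Gamma(\omega)$ with $\Gamma(n) \in \KK(\Gamma)$. Because $\tcl$ is defined purely in terms of the $\pi$-values and $\lambda$-labels along the path from $\varepsilon$ to $a$ (resp. $a'$) together with the surrounding $\lambda=1$ descendants, matching nodes of $A$ with nodes of $A'$ by shared $\pi$-value gives a well-defined $\LL_\Gamma$-isomorphism $A \to A'$ sending $a \mapsto a'$. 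By ultra-homogeneity this isomorphism extends to some $g \in \aut(\Gamma(\omega))$ with $g(a)=a'$, whence $\tp_\Gamma(a) = \tp_\Gamma(a')$.

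For (2), I would adopt the reading that $a' < b'$ is also part of the hypothesis (otherwise the conclusion fails, since $a \sqcap b = a$ while $a' \sqcap b' \neq a'$ would contradict any alleged equality of types). Under this assumption, repeat the recipe of part (1) on the finite tree closures $\tcl(\{a,b\})$ and $\tcl(\{a',b'\})$: matching nodes by $\pi$-value along the paths from $\varepsilon$ through $a$ to $b$ and through $a'$ to $b'$, and along the surrounding tree-closed neighborhoods, yields an $\LL_\Gamma$-isomorphism sending $(a,b) \mapsto (a',b')$. Ultra-homogeneity again lifts this to an automorphism of $\Gamma(\omega)$, giving $\tp_\Gamma(a,b) = \tp_\Gamma(a',b')$.

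The main obstacle is verifying that the candidate map on tree closures really is an $\LL_\Gamma$-isomorphism — one must check compatibility with $\leq$, $\sqcap$, $\pred$, $\varepsilon$, and each $P_\sigma$. This is straightforward bookkeeping from $\pi \circ \pred = \pred \circ \pi$ combined with the recursive definition of $\tcl$. A cleaner alternative I would flag is to invoke quantifier elimination (the corollary of Theorem \ref{ultrahom+aleph0-cat---->>QE}): every $\LL_\Gamma$-term in $x$ or $(x,y)$ normalises to $\varepsilon$, $\pred^k(x)$, $\pred^k(y)$, or $\pred^k(x \sqcap y)$, and under the given hypotheses the truth value of each atomic formula built from these is determined by $\pi(a)$, $\pi(b)$, and the relation $a < b$ — so equality of quantifier-free types, and thereby of types, follows at once without the explicit extension step.
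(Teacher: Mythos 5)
The paper records this as a Fact with no accompanying proof, so there is no argument of the paper's to compare against; I will evaluate your proposal directly. Your proof is correct, and the quantifier-elimination route you offer at the end is the cleanest way to close it: the unnumbered corollary following Proposition \ref{Gamma(S)_generic} gives QE for $\Th(\Gamma)$, so it suffices to match quantifier-free types, and a short normalization of $\LL_\Gamma$-terms (using $\pi\circ\pred = \pred\circ\pi$ and, for part (2), $a\sqcap b = a$) shows that $\qftp(a)$ is determined by $\pi(a)$ and $\qftp(a,b)$ by $\pi(a)$, $\pi(b)$, $a<b$. You are also right to flag that part (2) as stated is missing the hypothesis $a'<b'$: without it the formula $x<y$ belongs to $\tp_\Gamma(a,b)$ but need not belong to $\tp_\Gamma(a',b')$ — take $\sigma\in I(\Gamma)$ with a child $\tau$, let $a<b$ with $\pi(a)=\sigma$, $\pi(b)=\tau$, set $b'=b$, and choose $a'\neq a$ with $\pi(a')=\sigma$.

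Two small repairs to the tree-closure route, should you want to carry it through. First, the direct appeal to $\KK(\Gamma)$-ultra-homogeneity is not licit as written: $\tcl(\{a\})$ is generally not isomorphic to any $\Gamma(n)$, hence not in $\KK(\Gamma)$, so ultra-homogeneity does not apply to the partial isomorphism $\tcl(\{a\})\to\tcl(\{a'\})$ directly. The fix is to first extend that map, viewed as an embedding into $\Gamma(\omega)$, to an embedding of some $\Gamma(n)\supseteq\tcl(\{a\})$ by iterating Lemma \ref{extend_pi_embeddings} (exactly as in the $\KK(\Gamma)$-homogeneity part of the proof of Proposition \ref{Gamma(S)_generic}), and only then invoke ultra-homogeneity for $\Gamma(n)$. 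Second, the matching-by-$\pi$-value step silently uses that $\pi$ is injective on $\tcl(\{a\})$ (and on $\tcl(\{a,b\})=\tcl(\{b\})$ when $a<b$); this is true but deserves a sentence: two distinct elements of $\tcl(\{a\})$ with the same $\pi$-image would, after taking predecessors down to their meet, produce two distinct siblings of $\tcl(\{a\})$ mapping to a common $\sigma$ with $\lambda(\sigma)=\infty$, yet every $\infty$-labelled element of $\tcl(\{a\})$ must lie on the chain ${\downarrow}a$ since $\tcl$ adjoins only $\lambda=1$ successors, and a chain cannot contain two siblings. (Also, the finiteness of $\tcl(\{a\})$ comes directly from the definition of $\tcl$ and the finiteness of $\Gamma$, not from Lemma \ref{tcl=acl}.) The QE route bypasses all of this bookkeeping, which is why I would recommend it.
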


 \begin{lemma}\label{a/e_imples_a/B} 
 	Let $B\subset_\fin\Gamma(\omega)$ and $a\in\Gamma(\omega)$. If $a\notin\tcl(B)$ and $e=[a\wedge B]$, then $\tp_\Gamma(a/e)\models \tp_\Gamma(a/B)$, meaning that for any $a'\in\Gamma(\omega)$, if $a'\models\tp_\Gamma(a/e)$, then  $a'\models\tp_\Gamma(a/B)$. 
 \end{lemma}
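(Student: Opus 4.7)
The plan is to reduce the problem to quantifier-free types using the corollary to Theorem 1.4.5 (which gives quantifier elimination for $\Th(\Gamma)$), and then to verify term by term that the atomic diagram of $a$ over $B$ is already encoded in $\qftp_\Gamma(a/e)$ together with the finite combinatorial data of $B$ and $e$ inside $\Gamma(\omega)$. Since by QE types equal quantifier-free types, it suffices to show $\qftp_\Gamma(a/e) \models \qftp_\Gamma(a/B)$.

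I would first collect the easy atomic reductions. Fix $b \in B$. Since $b \in B \subseteq \tcl(B)$ while $a \notin \tcl(B)$, we have $a \neq b$; since $\downarrow B \subseteq \tcl(B)$, we have $a \not< b$; and $b \leq a$ holds if and only if $b \leq e$, by the very definition of $e = [a \wedge B]$ as the maximum of $\tcl(B) \cap {\downarrow}a$. The unary predicates $P_\sigma(a)$ and the iterated-predecessor formulas $\pred^k(a) = \pred^j(b)$ (or $= b$) split into one of two shapes: either they describe the segment from $e$ up to $a$, in which case they live inside $\qftp_\Gamma(a/e)$, or they describe the segment from $b$ up to $e$, in which case they are determined by $\qftp_\Gamma(B \cup \{e\})$ and the previous clauses.

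The step that requires genuine argument is the meet term: I would show that $a \sqcap b = e \sqcap b$ for every $b \in B$. The inequality $e \sqcap b \leq a \sqcap b$ is immediate from $e \leq a$ and $e \sqcap b \leq b$. For the reverse, $a \sqcap b \leq b \in B$ places $a \sqcap b$ in $\downarrow B \subseteq \tcl(B)$, and $a \sqcap b \leq a$ then forces $a \sqcap b \leq e$ by the maximality clause in the definition of $e$; combined with $a \sqcap b \leq b$, this yields $a \sqcap b \leq e \sqcap b$. Hence every meet between $a$ and an element of $B$ is computed entirely inside $\tcl(B)$ and depends on $a$ only through the datum $e$.

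Assembling the reductions, every atomic formula in $\qftp_\Gamma(a/B)$ is equivalent, modulo $\qftp_\Gamma(B)$ and the specification of $e$ inside $\tcl(B)$, to an atomic formula in $\qftp_\Gamma(a/e)$. Therefore any $a' \models \tp_\Gamma(a/e)$ satisfies the same atomic formulas over $B$ as $a$, so $\qftp_\Gamma(a'/B) = \qftp_\Gamma(a/B)$, and QE upgrades this to $\tp_\Gamma(a'/B) = \tp_\Gamma(a/B)$. I expect the main obstacle to be the meet identity $a \sqcap b = e \sqcap b$: it is the one place where one genuinely uses both the meet-tree axioms and the tree-closure definition of $e$, and once it is in hand the remaining bookkeeping on atomic predicates is routine.
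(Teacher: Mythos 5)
Your route is genuinely different from the paper's: you reduce via quantifier elimination to a term-by-term verification that each atomic formula in $\qftp_\Gamma(a/B)$ is determined by $\qftp_\Gamma(a/e)$ together with the fixed position of $e$ inside $\tcl(B)$, with the meet identity $a \sqcap b = e \sqcap b$ as the combinatorial heart, whereas the paper induces on the length of the path from $e$ up to $a$ and, at the base case, constructs an automorphism over $B$ carrying one realization of $\tp_\Gamma(a/e)$ to another (an orbit argument). Your meet identity is correct, and it packages more cleanly what the paper's base case is accomplishing.

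There is, however, a gap, and it sits precisely at the sentence ``therefore any $a' \models \tp_\Gamma(a/e)$ satisfies the same atomic formulas over $B$ as $a$.'' Your atomic reductions are derived from the hypotheses $a \notin \tcl(B)$ and $e = [a\wedge B]$, but neither fact is expressible over the single parameter $e$, so neither need transfer to an arbitrary $a' \models \tp_\Gamma(a/e)$. Concretely, if $\Gamma$ is a root with one $\infty$-child, take $B = \{c_0\}$ and $a = c_1$ for two distinct children of the root; then $e = \epsilon$, and $c_0 \models \tp_\Gamma(c_1/\epsilon)$ by Fact \ref{pi_determines_tp}, yet $c_0 \nmodels \tp_\Gamma(c_1/B)$ because $x \neq c_0$ belongs to $\tp_\Gamma(c_1/B)$. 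Restricting to $a' \notin \tcl(B)$ is not enough: a two-level tree plan produces an $a'$ with $[a'\wedge B]$ strictly above $e$, which flips the truth value of $\pred(x) = \pred(b)$ over $B$. Note that the paper's proof shares this same gap --- its claim that the set $D$ of realizations of $\tp_\Gamma(a/e)$ with $\pred(x)=e$ forms a single orbit over $B$ fails once $D\cap\tcl(B)\neq\emptyset$, since those elements are fixed by $\aut(\Gamma(\omega)/B)$. The repair is to strengthen the hypothesis on $a'$ to $[a'\wedge B]=e$; with that added your QE bookkeeping is sound and closes the argument, so make the extra assumption explicit and then check that the downstream use in the forking proposition only requires this restricted version.
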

 \begin{proof}
 	Let $B\subset_\fin\Gamma(\omega)$ and $a\in\Gamma(\omega)$ be given, and  $e=[a\wedge B]$. Let $P$ be the path from $e$ to $a$. By induction on the length of $P$ we show that $\tp_\Gamma(a/e)\models \tp_\Gamma(a/B)$. 
 	\begin{itemize} 
 		\item 	If length of $P$ is equal to 1, then $\pred(a)=e$ and $a\in I(S)$. Also if $\pred(a')=e=\pred(a)$ and $\pi(a)=\pi(a')$, then by Fact \ref{pi_determines_tp} we have $\tp_\Gamma(a)=\tp_\Gamma(a')$. Also since $e$ is definable from $a$ and definable from $a'$, $\tp_\Gamma(ae)=\tp_\Gamma(a'e)$. So among the members of the set $\{b : \pred(b)=e\}$, the function $\pi$ determines what the orbit is. This means that when we fix $B$, the set $D= \{b : \pred(b)=e=\pred(a)\,\, \wedge\,\, \pi(a)=\pi(b)\}$  consists of the realizations of $\tp_\Gamma(a/e)$ which forms an orbit over $B$ as well. In other words, given any two members $b, b'$ of $D$, we can make a embedding that fixes $\tcl(B)$ and takes $b$ to $b'$.  Such a embedding can be extended to an automorphism. So every realization of $\tp_\Gamma(a/e)$ realizes $\tp_\Gamma(a/B)$. 
 		
 		\item Suppose for the case that the length of $P$ is $\leq n$, the statement holds.
 		
 		\item  Let $P$ have length $n+1$. Let $a'=\pred(a)$, and let $P'$ be the  path from $e=[a\wedge B]=[a'\wedge B]$ to $a'$.  $P'$ has length $\leq n$ and so by induction hypothesis we have $\tp_\Gamma(a'/e)\models \tp_\Gamma(a'/B)$. 
 		\begin{claim}
 			$\tp_\Gamma(a/a')\models \tp_\Gamma(a/a' B)$
 		\end{claim}
 		\begin{proof}
 			The proof is the same as the case when length of $P$ is 1, using $a'B$ in place of $B$.
 		\end{proof}	
 		Note that	$\tp(a'/e)\models \tp(a'/B)$ and $\tp(a/a')\models \tp(a/a'B)$ together imply $\tp(a/e)\models \tp(a/B)$.
 	\end{itemize}
 \end{proof}
 

 \subsection{Forking in Models of $\Th(\Gamma)$}
 
 Let $\Gamma =(\Gamma,\lambda)$ be a  tree plan. We use the same notational conventions as in the previous section. 
 
 \begin{prop}
 	Let $C\subseteq B\subset_\fin \Gamma(\omega)$ and $a\in\Gamma(\omega)$.
 	\begin{enumerate}
 		\item If $a\in\tcl(C)$, then $\tp_\Gamma(a/B)$ does not divide over $C$.
 		\item Assume $a\notin\tcl(C)$. Then the following are equivalent:
 		\begin{enumerate}
 			\item $\tp_\Gamma(a/B)$ divides over $C$
 			\item There is $e\in \big(I(S)\cap\tcl(B)\big)\setminus \tcl(C)$  such that $[a\wedge C]<e\leq  a$.  
 		\end{enumerate}
 	\end{enumerate}
 \end{prop}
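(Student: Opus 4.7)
The plan is to use Lemma \ref{a/e_imples_a/B} together with the observation, made just before the asymptotic estimates section, that every $C$-automorphism of $\Gamma(\omega)$ fixes $\tcl(C)$ pointwise, so in particular $\tcl(C)\subseteq\dcl(C)$. Part (1) then follows quickly: $a\in\tcl(C)\subseteq\dcl(C)$ is fixed by every $C$-automorphism, so given any $C$-indiscernible sequence $(B_i)_{i<\omega}$ with $B_0=B$ and $\sigma_i\in\aut(\Gamma(\omega)/C)$ sending $B$ to $B_i$, the element $a=\sigma_i(a)$ realizes $\sigma_i(\tp_\Gamma(a/B))=\tp_\Gamma(x/B_i)$ for each $i$, so $\bigcup_i\tp_\Gamma(x/B_i)$ is consistent and $\tp_\Gamma(a/B)$ does not divide over $C$.

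For part (2), write $c^*:=[a\wedge C]$. For the direction (b) $\Rightarrow$ (a) I exhibit dividing directly. Since $e\in I(S)\setminus\tcl(C)$, Lemma \ref{a/e_imples_a/B} applied to $e$ (with $C$ in place of $B$) shows that $\tp_\Gamma(e/[e\wedge C])\models\tp_\Gamma(e/C)$, and this restricted type is non-algebraic because $e\notin\tcl([e\wedge C])\subseteq\tcl(C)$. Hence it has infinitely many realizations $e=e_0,e_1,e_2,\dots$ in $\Gamma(\omega)$; all realizations share the same value of $\pi$ and therefore the same tree-height, so they are pairwise $\leq$-incomparable. For each $i$ pick $\sigma_i\in\aut(\Gamma(\omega)/C)$ with $\sigma_i(e)=e_i$ (using ultrahomogeneity) and set $B_i:=\sigma_i(B)$, so $e_i\in\tcl(B_i)$. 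Because $\tcl(B)\subseteq\dcl(B)$, the condition ``$e\leq x$'' is expressible by a formula $\phi(x,B)\in\tp_\Gamma(a/B)$; its $\sigma_i$-conjugate $\phi(x,B_i)$ says ``$e_i\leq x$'', and the family $\{\phi(x,B_i):i<\omega\}$ is $2$-inconsistent since two incomparable elements of a tree cannot share an upper bound. A Ramsey extraction produces a $C$-indiscernible subsequence preserving the $2$-inconsistency, which witnesses that $\tp_\Gamma(a/B)$ divides over $C$.

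The direction (a) $\Rightarrow$ (b) is handled by contrapositive. Assume no such $e$ exists; I first claim $[a\wedge B]=c^*$. If $c^*<[a\wedge B]$, let $e'$ be the immediate successor of $c^*$ on the finite chain from $c^*$ up to $[a\wedge B]$. Then $e'\in\tcl(B)\setminus\tcl(C)$ (by maximality of $c^*$ in $\tcl(C)\cap{\downarrow}a$) and $c^*<e'\leq a$, so by hypothesis $e'\notin I(S)$, i.e.\ $\lambda(\pi(e'))=1$. But $\pred(e')=c^*\in\tcl(C)$ then forces $e'\in\tcl(C)$ by the closure rule, a contradiction. So $[a\wedge B]=c^*$, and since $a\notin\tcl(C)$ we have $a\neq c^*$, hence $a\notin\tcl(B)$. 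Now given any $C$-indiscernible $(B_i)$ with $B_0=B$ and $\sigma_i\in\aut(\Gamma(\omega)/C)$ with $\sigma_i(B)=B_i$, set $a_i:=\sigma_i(a)$. Because $c^*\in\dcl(C)$ is fixed by $\sigma_i$, we obtain $[a_i\wedge B_i]=\sigma_i([a\wedge B])=c^*$ and $\tp_\Gamma(a_i/c^*)=\tp_\Gamma(a/c^*)$. Thus $a$ realizes $\tp_\Gamma(a_i/c^*)$, and Lemma \ref{a/e_imples_a/B} applied to $a_i,B_i$ yields $a\models\tp_\Gamma(a_i/B_i)=\tp_\Gamma(x/B_i)$ for every $i$. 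Hence $a$ realizes $\bigcup_i\tp_\Gamma(x/B_i)$, so $\tp_\Gamma(a/B)$ does not divide over $C$.

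The main obstacle, I expect, is the construction of pairwise incomparable $C$-conjugates of $e$ in the (b) $\Rightarrow$ (a) step; this is the only place where a genuine combinatorial feature of $\Gamma(\omega)$ enters, but it is forced by the fact that $\pi$ preserves height and $\tp_\Gamma(e/[e\wedge C])$ pins down $\pi(x)$. The remaining steps are essentially bookkeeping with Lemma \ref{a/e_imples_a/B} and the rigidity $\tcl(\cdot)\subseteq\dcl(\cdot)$ already available in the preceding section.
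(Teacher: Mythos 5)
Your proof is correct, and while it uses the same structural ingredients as the paper (the identification $\tcl=\acl$, Lemma~\ref{a/e_imples_a/B}, and the fact that $\tcl(B)\subseteq\dcl(B)$ since multiplicities are $1$), it takes a genuinely different route in two places. In part~(1) the paper argues by pigeonhole through $\tcl=\acl$; you short-cut this using $\tcl(C)\subseteq\dcl(C)$ directly, which is cleaner. In part~(2), for (a)$\Rightarrow$(b) the paper argues directly: it sets $e:=[a\wedge B]$, shows $\tp_\Gamma(a/e)$ divides over $C$, deduces $e\notin\acl(C)$, and then asserts $e\in I(S)$. That last assertion is in fact a gap: $[a\wedge B]$ need not be an infinity-node (e.g.\ when $\lambda$ alternates $\infty,1$ along a branch, $[a\wedge B]$ can be a $1$-node sitting just above the offending $\infty$-node); one must descend the chain from $[a\wedge C]$ to $[a\wedge B]$ and locate the lowest node outside $\tcl(C)$, which is necessarily an $\infty$-node by the closure rule. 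Your contrapositive handles exactly this: you walk one step up the chain from $c^*=[a\wedge C]$, observe that the immediate successor cannot be a $1$-node outside $\tcl(C)$, conclude $[a\wedge B]=c^*$, and then Lemma~\ref{a/e_imples_a/B} together with $c^*\in\dcl(C)$ gives non-dividing. This is both correct and repairs the imprecision in the paper. For (b)$\Rightarrow$(a), your argument (pairwise incomparable $C$-conjugates of $e$, the formula $e\leq x$, Ramsey extraction) is parallel to the paper's $\pred^k(x)=e$ argument but is more scrupulous about extracting a genuine $C$-indiscernible sequence; also note the paper's interior claim ``$e\in B$'' should really be ``$e\in{\downarrow}B\subseteq\dcl(B)$'', which is what you use. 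Two minor remarks on your write-up: the detour through $\tp_\Gamma(e/[e\wedge C])$ in (b)$\Rightarrow$(a) is unnecessary---$e\notin\tcl(C)=\acl(C)$ already yields infinitely many $C$-conjugates---and, strictly speaking, the indiscernible sequences and the $\sigma_i$ live in a suitably saturated elementary extension of $\Gamma(\omega)$, though since $\Gamma(\omega)$ is the $\omega$-saturated, $\omega$-homogeneous countable model of an $\aleph_0$-categorical theory and all parameter sets here are finite, nothing is lost by working inside $\Gamma(\omega)$ itself.
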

 
 \begin{proof}
 	\begin{enumerate}
 		\item Let $a\in \tcl(C)$.   
 		Let $\phi(x,\bb)$ be a formula in $\tp_\Gamma(a/B)$ where $\bb$ is a tuple from $B$. If $\phi(x, \bb)$ divides over $C$, then there is a $C$-indiscernible sequence $(\bb_i)_{i<\omega}$ with $\bb_0=\bb$ such that $\big\{\phi(x,\bb_i) : i<\omega \big\}$ is $k$-inconsistent for some $k<\omega$.  By Lemma \ref{tcl=acl} we have  $a\in \acl(C)$. So there is only  finitely many copies of $a$ over $C$ available.  Let $\{a_1, \dots, a_n\}$ be the conjugates of $a$ over $C$ with $a_1=a$. Since $(\bb_i)_{i<\omega}$ is a $C$-indiscernible, for every $i<\omega$ there is $1\leq j_i\leq n$ such that $\models \phi(a_{j_i},\bb_i)$. By the pigeonhole principle there are $1\leq j\leq n$ and an infinite set $I\subseteq \omega$ such that $\models \phi(a_j, \bb_i)$ for all $i\in I$. This contradicts $k$-inconsistency. 
 		
 		\item $(a)\Rightarrow (b)$ Let $e=[a\wedge B]$. By Lemma \ref{a/e_imples_a/B} we have $\tp(a/e)\models\tp(a/B)$. 
 		
 		Let $\phi(x,e)$ 
 		be a formula that isolates $\tp(a/e)$ and let $\psi(x,\bb)$ be a formula in $\tp(a/B)$ which divides over $C$. 
 		Since $\phi(x,e)\models \psi(x,\bb)$ and $\psi(x,\bb)$ divides over $C$, $\phi(x,e)$ divides over $C$ as well. So 
 		there is a $C$-indiscernible sequence $(e_i)_{i<\omega}$ with $e_0=e$ such that $\{\phi(x,e_i) : i<\omega\}$ is $k$-inconsistent for some $k<\omega$.  Since $e$ belongs to a non-constant indiscernible sequence over $C$, $e\in I(S)$.
 		
 		
 		
 		$(b)\Rightarrow (a)$ Let $E$ be  the set of all conjugates of $e$ over $C$.  
 		\begin{claim}
 			$\tp_\Gamma(a/e)$ divides over $C$. 
 		\end{claim}
 		\begin{proof}
 			Let $k$ be the smallest positive integer such that $\pred^k(a)=e$. We will show that the formula  $\phi(x,e)=\big(\pred^k(x)=e \big)\in \tp_\Gamma(a/e)$ divides over $C$. Let $(e_i)_{i<\omega}$ be an enumeration of $E$. Then $(e_i)_{i<\omega}$ is a $C$-indiscernible sequence and $\{\pred^k(x)=e_i : i<\omega\}$ is 2-inconsistent. So $\tp(a/e)$ divides over $C$.
 		\end{proof}
 		
 		\begin{claim}
 			$e\in B$.
 		\end{claim}
 		\begin{proof}
 			If $e\notin B$, then since $e\in I(S)$, $e\notin \tcl(B)$ which is a contradiction.  
 		\end{proof}
 		The  previous  two claims together imply that $\tp_\Gamma(a/B)$ divides over $C$. 
 	\end{enumerate}
\end{proof}
 
\section*{Acknowledgment}
The author would like to thank his PhD advisor, Cameron Hill, for her very helpful guidance and assistance.  The author also would like to thank Alex Kruckman for his reading of the first draft and for the helpful comments.

 	\bibliography{Asymptotic-Trees_Mirabi}
 \bibliographystyle{abbrv}
 
\end{document}